\documentclass[hidelinks,onefignum,onetabnum]{siamart251216}
\allowdisplaybreaks

\usepackage{lipsum}
\usepackage{amsfonts}
\usepackage{graphicx}
\usepackage{epstopdf}
\usepackage{algorithmic}
\usepackage{amssymb}
\usepackage{graphicx}
\usepackage{epstopdf}
\usepackage{subcaption}
\usepackage{booktabs}
\usepackage{bm}
\usepackage{multirow}
\usepackage{epstopdf}
\ifpdf

  \allowdisplaybreaks

  \DeclareGraphicsExtensions{.eps,.pdf,.png,.jpg}
\else
  \DeclareGraphicsExtensions{.eps}
\fi

\newsiamremark{remark}{Remark}
\newsiamremark{hypothesis}{Hypothesis}
\crefname{hypothesis}{Hypothesis}{Hypotheses}
\newsiamthm{claim}{Claim}
\newsiamremark{fact}{Fact}
\crefname{fact}{Fact}{Facts}
\newtheorem{assumption}{Assumption}[section]
\headers{Dispersion and Observability for LDG}{Y. Li, X. Wang, and E. Zuazua}

\title{Fully Discrete High-Order DG Schemes for 1-D Waves: Dispersion and Observability\thanks{\textbf{Funding}: The first author was partially supported by the National Natural Science Foundation of China (No. 12301566), and by the Science and Technology Commission of Shanghai Municipality (No. 23JC1400300), and by
the Shanghai Pujiang Program of Baiyulan Talent Plan (No. 24PJD002). The third author was partially supported by the European Research Council (ERC) under the European Union's Horizon 2030 research and innovation programme (grant agreement NO: 101096251-CoDeFeL;  by the Alexander von Humboldt Professorship program; the European Union's Horizon Europe MSCA project ModConFlex (HORIZON-MSCA-2021-DN-01(project 101073558); the Transregio 154 Project ``Mathematical Modelling, Simulation and Optimization Using the Example of Gas Networks" of the DFG; the AFOSR 24IOE027 project; the SURE-AI Norwegian Centre for Sustainable, Risk-Averse, and Ethical AI grant 357482, Research Council of Norway;  by the Grant PID2023-146872OB-I00-DyCMaMod of MICIU (Spain) and by the COST Actions CA24122 - Multiscale Stochastics, Patterns, and Analysis of Combinatorial Environments and  CA24136 - Interactions between Control Theory and Machine Learning.}
}

\author{
Yunzhang Li\thanks{Research Institute of Intelligent Complex Systems, Fudan University, Shanghai 200433, China; Department of Mathematics, Friedrich-Alexander-Universit\"at Erlangen-N\"urnberg, 91058 Erlangen, Germany. \email { li\_yunzhang@fudan.edu.cn}.}
\and
Xiaoyang Wang\thanks{Corresponding Author. Research Institute of Intelligent Complex Systems, Fudan University, Shanghai 200433, China. \email{xiaoyangwang25@m.fudan.edu.cn}.}
\and
Enrique Zuazua\thanks{
  Department of Mathematics, Friedrich-Alexander-Universit\"at Erlangen-N\"urnberg, 91058 Erlangen, Germany;
  Departamento de Matem\'aticas, Universidad Aut\'onoma de Madrid, 28049 Madrid, Spain;
  Chair of Computational Mathematics, Fundaci\'on Deusto, Av. de las Universidades, 24, 48007 Bilbao, Basque Country, Spain.
  \email{enrique.zuazua@fau.de}.}
}
\usepackage{amsopn}

\ifpdf
\hypersetup{
  pdftitle={Fully Discrete High-Order DG Schemes for Waves: Dispersion and Observability},
  pdfauthor={Yunzhang Li, Xiaoyang Wang, and Enrique Zuazua}
}
\fi

\begin{document}
\setlength{\abovedisplayskip}{0.05cm}
\setlength{\belowdisplayskip}{0.05cm}
\maketitle

\begin{abstract}
This paper investigates the spectral structure, numerical dispersion, and observability of fully discrete approximations of the one-dimensional wave equation by $P^k$ (local) discontinuous Galerkin methods. Characterizing the coupled space-time numerical dispersion reveals a trapping mechanism that forces the group velocities of both physical and spurious modes to vanish at selected frequencies. We then establish an exponential blow-up of order $\exp(h^{-(1-\varepsilon)})$ for the observability constant under this trapping mechanism. To overcome this divergence for arbitrary $k$, we propose a spectral filtering strategy to restore uniform observability. Theoretical analysis and numerical experiments indicate that higher-order methods may facilitate this recovery by preserving a larger genuine physical frequency band, thereby reducing filtering cost and observation time.
\end{abstract}

\begin{keywords}
$P^k$-Local discontinuous Galerkin, full discretization, space-time numerical dispersion, uniform observability, exponential blow-up, spectral filtering
\end{keywords}

\begin{MSCcodes}
65M60, 93B07, 65M12, 93C20
\end{MSCcodes}

\section{Introduction}
\subsection{Observability of the Wave Equation}
We consider the Cauchy problem for the one-dimensional  wave equation on the whole real line $\mathbb{R}$
\begin{equation} \label{eq:wave_cont}
    \begin{cases}
        u_{tt}(x,t) - u_{xx}(x,t) = 0, & \qquad x \in \mathbb{R}, \quad t>0, \\
        u(x,0) = u^0(x), \quad u_t(x,0) = u^1(x), & \qquad x \in \mathbb{R}.
    \end{cases}
\end{equation}

Problem \eqref{eq:wave_cont} admits plane waves whose spatial wavenumber $\xi$ and temporal frequency $\tau$ satisfy the dispersion relation $\tau(\xi) = \pm \xi$, yielding a constant unit group velocity. This uniform propagation is the physical mechanism underlying the observability of the continuous wave equation. By observability we mean the possibility of estimating the full energy of the solution from measurements performed only on part of the domain, or on part of the boundary, over a finite time interval. Physically, all wave components must reach the observation region, yielding the  energy inequality behind controllability. To formalize this, we define the conserved total energy of \eqref{eq:wave_cont}
\begin{equation} \label{eq:energy_cont}
\mathcal{E}(u(\cdot,t), u_t(\cdot,t)) := \frac{1}{2} \left( \|u(\cdot, t)\|_{\dot{H}^1(\mathbb{R})}^2 + \|\partial_t u(\cdot, t)\|_{L^2(\mathbb{R})}^2 \right).
\end{equation}
For the continuous problem \eqref{eq:wave_cont}, classical observability holds when the observation domain is the complement of a compact set \cite{MR1146833}, provided the observation time $T$ is larger than the diameter of the unobserved domain. As detailed in \cite{MR3222005}, for the domain $\Omega := \mathbb{R} \setminus [-1, 1]$ and time $T > 2$, any solution with initial data $(u^0, u^1) \in \dot{H}^1(\mathbb{R}) \times L^2(\mathbb{R})$ satisfies the observability inequality
\begin{equation}\label{observability property}
\mathcal{E}(u^0, u^1) \leq C_T \int_0^T \mathcal{E}_\Omega(u(\cdot, t), u_t(\cdot, t)) \mathrm{d}t.
\end{equation}
Here, $\mathcal{E}_\Omega(f^0, f^1) = \frac{1}{2} \left( \|f^0(\cdot, t)\|_{\dot{H}^1(\Omega)}^2 + \| f^1(\cdot, t)\|_{L^2(\Omega)}^2 \right)$ is the energy in $\Omega$.

Although this observability property \eqref{observability property} is dual to exact controllability \cite{MR4823719}, our primary focus is on its behavior when the continuous wave operator is fully discretized. From the numerical-analysis viewpoint, observability provides a stringent test of a scheme by revealing what goes wrong under discretization. A method may be stable and convergent for fixed final times while still producing high-frequency numerical waves that travel too slowly, or not
at all, on the observation time scale. Uniform observability detects precisely this
defect.

\subsection{Motivation and Overview of Main Results}
\subsubsection{Motivation}
Discontinuous Galerkin (DG) methods are widely used in wave simulations for their arbitrarily high-order accuracy and flexibility. 
However, investigating observability in the fully discrete high-order setting is highly non-trivial. The primary challenge of high-order spatial discretizations is that the dispersion relation cannot be explicitly expressed. This lack of explicit algebraic formulas prevents the direct evaluation of wave velocities, rendering classical analytical tools inapplicable. Instead, the numerical dispersion is governed by a complex $(k+1)\times(k+1)$ matrix symbol that couples $k+1$ discrete modes, which explains why existing observability analyses predominantly restrict to low-order frameworks. Moreover, full discretization nonlinearly intertwines this spatial matrix with temporal stepping. This space-time coupling  forces group velocities to vanish and trap high-frequency waves, a phenomenon we refer to as the trapping mechanism.

\subsubsection{Overview of Main results}
This work overcomes the aforementioned challenges inherent to high-order discretizations and full space-time coupling. Our main results range from abstract spectral properties to concrete observability limits, filtering mechanisms, and the potential advantages of higher-order schemes. We summarize these results below, with complete details provided in Section~\ref{Section main results}:

\begin{enumerate}
    \item[1.]\textbf{Fully Discrete Spectral Analysis.} 
    To handle the $(k+1)\times(k+1)$ symbol matrix, Theorem~\ref{thm:spectral_structure} establishes its precise spectral decomposition, separating the propagating dynamics from non-propagative anomalies.  
    \item[2.]\textbf{The Trapping Mechanism.} Using this spectral decomposition, we identify the trapping mechanism. Theorem~\ref{thm:fully_group_velocity_limits} proves that space-time numerical dispersion forces the group velocities of both the physical and spurious modes to vanish at some frequencies. Furthermore, we characterize the specific conditions under which these group velocities are restored to non-zero values.

    \item[3.]\textbf{Exponential Observability Loss.} We translate this trapping mechanism into a concrete observability loss. Theorem~\ref{non_uniform_observability} proves that the observability constant $C_T$ satisfies an exponential lower bound $\exp(h^{-(1-\varepsilon)})$, where $h$ is the mesh size and $\varepsilon > 0$ is arbitrarily small. 
Table~\ref{tab:observability_growth} summarizes the known growth rates of classical schemes alongside our exponential limit.
\begin{table}[htbp]
\vspace{-1.2\baselineskip}
    \centering
    \renewcommand{\arraystretch}{1.3} 
    \caption{Known lower bounds for the asymptotic growth of $C_T$ (at least).}
    \label{tab:observability_growth}
     \resizebox{0.87\textwidth}{!}{
    \begin{tabular}{l l l l l}
        \toprule
        \textbf{Year} & \textbf{Author(s)} & \textbf{Discretization} & \textbf{Scheme} & \textbf{$C_T$} \\
        \midrule
        1999 & Infante \& Zuazua \cite{MR1700042} & Semi-discrete & FD, $P^1$-FE & $\propto h^{-2}$ \\
        2002 & Micu \cite{MR1912914}                  & Semi-discrete & FD            & $\propto \exp(h^{-\frac{1}{2}})$ \\
        2014 & Marica \& Zuazua \cite{MR3222005}& Semi-discrete & $P^1$-SIPG & $\propto h^{-\theta}$ \\
        \textbf{---} & \textbf{Our work} & \textbf{Fully discrete} & \textbf{$P^k$-LDG} & $\propto \exp(h^{-(1-\varepsilon)})$ \\
        \bottomrule
    \end{tabular}}
\end{table}
 This tighter exponential blow-up shows that the obstruction is not a mild high-frequency artifact that can be controlled by simply refining the mesh, but rather a severe structural instability of the unfiltered discrete dynamics.
    \item[4.] 
    \textbf{Observability Recovery.} To overcome this exponential divergence for arbitrary $k$, Theorem~\ref{theorem restore} establishes that uniform observability is restored via a modal-frequency filtering strategy. Furthermore, both theoretical analysis and numerical evidence suggest that higher-order methods facilitate this recovery by retaining a larger genuine physical frequency band, thereby reducing both the observation time and the filtering cost.
\end{enumerate}

\subsection{Methodology}
We outline the core proofs, deferring full details to Section~\ref{section3}. To bypass the algebraic complexities of the $(k+1) \times (k+1)$ symbol, our framework builds on two key components: 

\textbf{Qualitative Spectral Analysis.} Rather than attempting to explicitly solve for the roots of the high-degree dispersion relations, we first establish the intrinsic structural properties of the symbol matrix, characterizing its Hermitian positive semi-definiteness and local even symmetries. Building upon this algebraic foundation, we leverage Kato's Analytic Perturbation Theory \cite[Theorem 6.1]{MR1335452} to guarantee the real analyticity of the eigensystem. This combined approach provides the regularity necessary to differentiate the eigenvalues and perform asymptotic analysis on the trapping mechanism at critical frequencies.

\textbf{Quantitative Non-stationary Phase Analysis.} To rigorously quantify the resulting observability loss, we construct poorly propagating numerical wave packets near the highest frequency using Gevrey class $G^{s}$ cut-off functions. Governed by the aforementioned trapping mechanism, these high-frequency components remain localized outside the observation region. By performing a non-stationary phase analysis and tracking the asymptotic behavior of these highly oscillatory integrals, we bypass standard polynomial estimates to derive the exponential lower bound.

\subsection{Related Works}
\textbf{Continuous wave dynamics.} For the continuous wave equation, exact controllability and  observability were  established by Lions \cite{MR953547,MR963060}. The non-dispersive wave propagation ensuring that the observability constant depends only on the observation time. 

\textbf{Semi-discrete approximations.} The loss of uniform observability as $h \to 0$ is well-documented in semi-discrete finite difference (FD) and $P^{1}$-finite element (FE) schemes \cite{MR1288099, MR1036928, MR1700042, MR2179896, MR3220862, MR3058594, MR1039237}. Restoring this property requires regularizations such as Fourier filtering, bi-grid filtering \cite{MR1196839, MR2486937}, or mesh modifications \cite{MR3483095}. Micu \cite{MR1912914} proved an exponential blow-up of the observability constant for semi-discrete FD. Parallel to these spatial approximations, Zhang, Zheng, and Zuazua \cite{MR2449094} established that time-discrete schemes lack observability for any fixed time step.

\textbf{Alternative spatial discretizations and higher-order schemes.} To address this observability loss without external filtering, alternative discretizations were developed, such as mixed finite element methods \cite{MR2207268, MR2387911, MR2654195}, and tailored implicit time-stepping \cite{MR2143953}. Conversely, the pursuit of higher-order accuracy introduces new  complexities. As shown in the context of $P^2$-finite element \cite{MR3022241} and Legendre-Galerkin formulations \cite{MR4191568}, higher-order schemes generate additional spurious optic branches. To overcome these structural anomalies and restore uniform observability, methods such as frequency filtering and spectral filtering were employed.

\textbf{Fully discrete approximations.} Transitioning to fully discrete schemes introduces analytical challenges. Negreanu and Zuazua \cite{MR2020643, MR2217389} analyzed the fully discrete FD scheme, showing that uniform observability is restored at $\Delta t=h$ via space-time dispersion cancellation. Furthermore, general transfer theorems \cite{MR2418618} can deduce fully discrete observability from conservative semi-discrete systems via resolvent estimates for appropriately filtered initial data, though these mechanisms generally yield suboptimal observability times.

\textbf{Discontinuous Galerkin methods.} DG methods, particularly the LDG schem\-e originally developed for convection-diffusion systems \cite{MR1655854,MR1010597,MR983311,MR1015355,MR1619652,MR1103092}, are widely applied to wave propagation. However, the multiple spurious modes inherent to DG discretizations complicate dispersion analysis. Ainsworth et al. \cite{MR2285764} analyzed the dispersive properties of semi-discrete DG methods by quantifying their phase errors. Linking this dispersion to control, Marica and Zuazua \cite{MR3222005} established a polynomial blow-up of the observability constant for the $P^1$-Symmetric Interior Penalty Galerkin (SIPG) method, subsequently restoring observability via Fourier and bi-grid filtering. However, the rigorous quantification of observability limits and filtering recovery for fully discrete high-order schemes remains sparse. This paper bridges this gap for the fully discrete $P^k$-LDG method.

\subsection{Outline of the Paper}
Section~\ref{Section 2} formulates the fully discrete $P^k$-LDG scheme and Section~\ref{Section main results} states the main theoretical results, whose proofs are detailed in Section~\ref{section3}. Section~\ref{sec:numerical_experiments} provides numerical experiments to corroborate our analysis. Finally, Section~\ref{section 4} draws conclusions and discusses future  directions.
\section{The Fully Discrete Scheme}\label{Section 2}
\subsection{Notations}
We consider a uniform mesh of size $h$, with $x_{j+\frac{1}{2}} = (j + \frac{1}{2})h$ and cells $I_j = (x_{j-\frac{1}{2}}, x_{j+\frac{1}{2}})$ for $j \in \mathbb{Z}$. The piecewise polynomial space is defined as
\begin{equation*} \label{eq:space_Vh}
    V_h^k = \{ v \in L^2(\mathbb{R}) : v|_{I_j} \in P^k(I_j), \quad \forall j \in \mathbb{Z} \},
\end{equation*}
where $P^k(I_j)$ denotes the space of polynomials in $I_j$ of degree at most $k$.  The left and right limits of a function $u$ at the cell interfaces are denoted by $u_{j + \frac{1}{2}}^{\pm} := u(x_{j + \frac{1}{2}}^{\pm})$.

Any $u_h \in V_h^k$ is locally represented on $I_j$ by the monomial basis $\{ (\frac{x-x_j}{h/2})^l \}_{l=0}^k$, with its local coefficients collected in $\bm{u}_j \in \mathbb{C}^{k+1}$. To ensure a finite total  energy, the sequence $\bm{U} = \{\bm{u}_j\}_{j\in\mathbb{Z}}$ is restricted to  $\ell^2(\mathbb{Z}; \mathbb{C}^{k+1})$. We define the inner product, its restriction to the observation domain $\Omega$, and the (semi-)norms associated with the positive (semi-)definite matrices $\mathbb{M}$ and $\mathbb{K}$ as follows:
\begin{equation*}
\begin{aligned}
\langle \bm{U}, \bm{V} \rangle &:= \sum_{j\in\mathbb{Z}} \bm{u}_j \cdot \overline{\bm{v}_j}, \qquad &
\langle \bm{U}, \bm{V} \rangle_{\Omega} &:= \sum_{j:x_j \in \Omega} \bm{u}_j \cdot \overline{\bm{v}_j}, \\
\|\bm{U}\|_{\bm{S}}^2 &:= \langle \bm{U}, \bm{S} \bm{U} \rangle, \qquad &
\|\bm{U}\|_{\bm{S},\Omega}^2 &:= \langle \bm{U}, \bm{S} \bm{U} \rangle_\Omega, \qquad \text{for } \bm{S} \in \{\mathbb{M}, \mathbb{K}\}.
\end{aligned}
\end{equation*}

With uniform time step $\Delta t$, $t_n = n\Delta t$, and CFL ratio $\lambda = \Delta t/h$, let $u_h^n \in V_h^k$ be the fully discrete approximation at $t_n$ with global coefficient vector $\bm{U}^n$.

\subsection{\texorpdfstring{The Fully Discrete $P^k$-LDG Scheme}{The Fully Discrete Pk-LDG Scheme}}
We construct the LDG scheme by rewriting the second-order wave equation \eqref{eq:wave_cont} into a first-order system using $q = u_x$
\begin{equation} \label{eq:first_order_sys}
    u_{tt} - q_x = 0, \qquad q - u_x = 0.
\end{equation}
We multiply the system \eqref{eq:first_order_sys} by test functions $w, v \in V_h^k$ respectively, and integrate over $I_j$. Integrating by parts yields the local weak formulation
\begin{align}
    &\int_{I_j} (u_h)_{tt} w \, \mathrm{d}x = -\int_{I_j} q_h w_x \, \mathrm{d}x + \widetilde{q_h}_{j+
    \frac{1}{2}} w^-_{j+
    \frac{1}{2}}-\widetilde{q_h}_{j-
    \frac{1}{2}} w^+_{j-
    \frac{1}{2}}, \label{eq:weak_b}\\
    &\int_{I_j} q_h v \, \mathrm{d}x = -\int_{I_j} u_h v_x \, \mathrm{d}x + \widetilde{u_h}_{j+
    \frac{1}{2}} v^-_{j+
    \frac{1}{2}}-\widetilde{u_h}_{j-
    \frac{1}{2}} v^+_{j-
    \frac{1}{2}}, \label{eq:weak_a}
\end{align}
where $\widetilde{u_h} = u_h^-$ and $\widetilde{q_h} = q_h^+$ denote the alternating fluxes. Substituting equation \eqref{eq:weak_a} into \eqref{eq:weak_b} and applying a central difference scheme for the time discretization yields the local fully discrete system
\begin{equation} \label{eq:local_fully_discrete}
    \bm{M}^h \frac{\bm{u}_j^{n+1} - 2\bm{u}_j^n + \bm{u}_j^{n-1}}{(\Delta t)^2} + \bm{K}_0^h \bm{u}_j^n + \bm{K}_{-1}^h \bm{u}_{j-1}^n + \bm{K}_{+1}^h \bm{u}_{j+1}^n = 0,
\end{equation}
where $\bm{M}^h$ is the local mass matrix, and $\bm{K}_0^h, \bm{K}_{\pm1}^h$ represent the local stiffness blocks. We summarize the fundamental properties of these discrete matrices below.
\begin{lemma} \label{lem:matrix_symmetry}
    The matrices $\bm{K}_{\pm1}^h, \bm{K}_0^h$, and $\bm{M}^h$ satisfy the following properties:
    \begin{equation*}
    (\bm{K}_{-1}^h)^T = \bm{K}_{+1}^h, \quad (\bm{K}_0^h)^T = \bm{K}_0^h, \quad (\bm{M}^h)^T = \bm{M}^h.
    \end{equation*}
\end{lemma}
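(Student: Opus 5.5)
The plan is to write every block explicitly in terms of a few elementary matrices built from the monomial basis $\phi_l(x)=\left(\frac{x-x_j}{h/2}\right)^l$, $l=0,\dots,k$, and then read off the transposition identities. On the reference cell $\hat{x}\in(-1,1)$ we have $\phi_l = \hat{x}^l$, with the following building blocks:
\[
\hat{M}_{ml}=\int_{-1}^1 \hat x^{m+l}\,\mathrm d\hat x,\qquad \hat D_{ml}=\int_{-1}^1 \hat x^{l}\,(\hat x^{m})'\,\mathrm d\hat x,\qquad \mathbf{e}^{\pm}=(1,(\pm1),\dots,(\pm1)^k)^T .
\]
The vectors $\mathbf e^{\pm}$ are the traces of the basis at $\hat x=\pm1$. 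Then $\bm M^h=\frac h2\hat M$, which is symmetric because its entries depend only on $m+l$. This settles the third identity.

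Next I eliminate $q_h$ using \eqref{eq:weak_a}. In matrix form, with the alternating flux $\widetilde{u_h}=u_h^-$, we get
\[
\bm M^h\bm q_j = -\hat D\bm u_j + \mathbf e^+(\mathbf e^+)^T\bm u_j - \mathbf e^-(\mathbf e^+)^T\bm u_{j-1} =: A\bm u_j + B\bm u_{j-1}.
\]
Similarly, \eqref{eq:weak_b} with $\widetilde{q_h}=q_h^+$ gives
\[
\bm M^h\ddot{\bm u}_j = -\hat D\bm q_j + \mathbf e^+(\mathbf e^-)^T\bm q_{j+1} - \mathbf e^-(\mathbf e^-)^T\bm q_j =: C\bm q_j + E\bm q_{j+1}.
\]
The key observation is an integration-by-parts duality. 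We have $\hat D+\hat D^T=\mathbf e^+(\mathbf e^+)^T-\mathbf e^-(\mathbf e^-)^T$, since $\int (\hat x^{m+l})'=[\hat x^{m+l}]_{-1}^{1}$. From this, $A=-\hat D+\mathbf e^+(\mathbf e^+)^T=\hat D^T+\mathbf e^-(\mathbf e^-)^T=-C^T$, and $B=-\mathbf e^-(\mathbf e^+)^T=-E^T$.

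Substituting $\bm q_j=(\bm M^h)^{-1}(A\bm u_j+B\bm u_{j-1})$ and multiplying through gives the blocks (up to the sign convention of moving them to the left-hand side):
\[
\bm K_0^h=-\big(C(\bm M^h)^{-1}A+E(\bm M^h)^{-1}B\big),\quad \bm K_{-1}^h=-C(\bm M^h)^{-1}B,\quad \bm K_{+1}^h=-E(\bm M^h)^{-1}A.
\]
Using $C=-A^T$, $E=-B^T$ and the symmetry of $(\bm M^h)^{-1}$:

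\begin{itemize}
\item $\bm K_0^h=A^T(\bm M^h)^{-1}A+B^T(\bm M^h)^{-1}B$, which is symmetric (indeed positive semidefinite).
\item $\bm K_{-1}^h=A^T(\bm M^h)^{-1}B$, so $(\bm K_{-1}^h)^T=B^T(\bm M^h)^{-1}A=\bm K_{+1}^h$.
\end{itemize}

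The only delicate step is the bookkeeping of signs and of which trace ($\mathbf e^\pm$) the flux and the test function pick up at each interface. The duality $A=-C^T$ relies precisely on the alternating flux pair $(u^-,q^+)$ matching the boundary terms left over by integration by parts. I would verify this identity carefully first, then check that the index shift $j\pm1$ lands correctly so that $B$ pairs with $E$.
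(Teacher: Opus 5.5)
Your proposal is correct. I checked the bookkeeping that you flagged as delicate, and it holds:
\begin{itemize}
\item The flux $u_h^-$ contributes $(\mathbf e^+)^T\bm u_j$ at $x_{j+\frac12}$ and $(\mathbf e^+)^T\bm u_{j-1}$ at $x_{j-\frac12}$.
\item The flux $q_h^+$ contributes $(\mathbf e^-)^T\bm q_{j+1}$ at $x_{j+\frac12}$ and $(\mathbf e^-)^T\bm q_j$ at $x_{j-\frac12}$.
\item The identity $\hat D+\hat D^T=\mathbf e^+(\mathbf e^+)^T-\mathbf e^-(\mathbf e^-)^T$ is right, and so are the dualities $A=-C^T$ and $B=-E^T$.
\end{itemize}
As a sanity check, for $k=0$ your formulas give $\bm K_0^h=2/h$ and $\bm K_{\pm1}^h=-1/h$. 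This is consistent with $\bm e_0^T(\bm K_{-1}^h+\bm K_{+1}^h)\bm e_0=-2/h$, used in the proof of Lemma~\ref{fully physical eigenvalue limit}.

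The paper gives no proof of this lemma; it is stated as a summary and later invoked as ``a direct calculation''. So there is no competing route to compare. What your structural argument adds over an entrywise check is the factorization it produces. With $G(\xi)=A+Be^{-\sqrt{-1}\xi h}$, your block formulas give
\[
\mathcal K^h(\xi)=G(\xi)^H(\bm M^h)^{-1}G(\xi),\qquad \|\bm U\|_{\mathbb K^h}^2=\sum_{j\in\mathbb Z}\bm q_j^H\bm M^h\bm q_j .
\]
Several facts the paper needs later then follow directly:
\begin{itemize}
\item The Hermitian positive semi-definiteness of $\mathcal K^h(\xi)$ in Lemma~\ref{lemma3.1} is immediate. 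The paper instead takes positive semi-definiteness of $\mathbb K^h$ for granted and argues via Parseval and a contradiction.
\item Definiteness at $\xi=\pm\pi/h$ reduces to injectivity of the single matrix $A-B$.
\item The identities $\|\bm U\|_{\mathbb K^h}^2=\sum_j\|q_h\|_{L^2(I_j)}^2$ and $\bm v^H\mathcal K^h(0)\bm v=\int_{I_j}|q_h(\bm v)|^2\,\mathrm dx$ hold. The paper uses these without derivation in Lemmas~\ref{lemma3.1} and~\ref{fully proposition1}.
\end{itemize}
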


Aggregating the local equations \eqref{eq:local_fully_discrete} over all $j \in \mathbb{Z}$, the global fully discrete $P^k$-LDG scheme can be written in the following compact form
\begin{equation} \label{eq:global_fully_discrete}
    \mathbb{M}^h \frac{\bm{U}^{n+1} - 2\bm{U}^n + \bm{U}^{n-1}}{(\Delta t)^2} + \mathbb{K}^h \bm{U}^n = 0, \qquad n \ge 1.
\end{equation}
Here, the infinite global mass matrix $\mathbb{M}^h$ is block-diagonal with blocks $\bm{M}^h$, while the stiffness matrix $\mathbb{K}^h$ is block-tridiagonal generated by $[\bm{K}_{-1}^h, \bm{K}_0^h, \bm{K}_{+1}^h]$.

\subsection{Discrete energy}
Based on the scheme \eqref{eq:global_fully_discrete} and the inner product defined above, we define the total discrete energy associated with the time interval $[t_{n}, t_{n+1}]$ as follows
\begin{equation} \label{eq:discrete_energy}
    \mathcal{E}_h^{n} := \frac{1}{2} \left\| \frac{\bm{U}^{n+1} - \bm{U}^n}{\Delta t} \right\|_{\mathbb{M}^h}^2 + \frac{\|\bm{U}^{n+1}\|_{\mathbb{K}^h}^2}{4}  + \frac{\|\bm{U}^n\|_{\mathbb{K}^h}^2}{4}  - \frac{(\Delta t)^2}{4} \left\| \frac{\bm{U}^{n+1} - \bm{U}^n}{\Delta t} \right\|_{\mathbb{K}^h}^2.
\end{equation}
Here, the first term is the discrete kinetic energy, the middle two terms denote the averaged potential energy, and the final term acts as a time-discretization correction. The observation energy $\mathcal{E}_{h,\Omega}^n$ is simply its restriction to $\Omega$.

With the energy defined in \eqref{eq:discrete_energy}, we establish the following conservation law.
\begin{lemma}\label{invariant}
    For the fully discrete scheme \eqref{eq:global_fully_discrete}, the energy  \eqref{eq:discrete_energy} is conserved
    \begin{equation*}
        \mathcal{E}_h^{n} = \mathcal{E}_h^{n-1},\qquad n\ge1.
    \end{equation*}
\end{lemma}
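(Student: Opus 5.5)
The plan is the classical leapfrog energy argument: take the scheme \eqref{eq:global_fully_discrete} at time level $n$ and pair it with the centered difference $\bm{U}^{n+1}-\bm{U}^{n-1}$. Symmetry of the operators is essential here. By Lemma~\ref{lem:matrix_symmetry}, $\mathbb{M}^h$ is block-diagonal with symmetric real blocks, so it is self-adjoint on $\ell^2(\mathbb{Z};\mathbb{C}^{k+1})$. The relation $(\bm{K}_{-1}^h)^T=\bm{K}_{+1}^h$ together with $(\bm{K}_0^h)^T=\bm{K}_0^h$ makes the block-tridiagonal $\mathbb{K}^h$ self-adjoint as well; this is checked by reindexing the sum $\sum_j \bm{K}_{+1}^h\bm{u}_{j+1}\cdot\overline{\bm{v}_j}$. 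Hence $\langle \bm{A},\mathbb{S}\bm{B}\rangle=\overline{\langle \bm{B},\mathbb{S}\bm{A}\rangle}$ for $\mathbb{S}\in\{\mathbb{M}^h,\mathbb{K}^h\}$, which gives the polarization identity
\[
\operatorname{Re}\langle \bm{A}+\bm{B},\mathbb{S}(\bm{A}-\bm{B})\rangle=\|\bm{A}\|_{\mathbb{S}}^2-\|\bm{B}\|_{\mathbb{S}}^2 .
\]
All series converge because $\bm{U}^n\in\ell^2$ and the operators are bounded.

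Write $\bm{D}^{n}:=(\bm{U}^{n+1}-\bm{U}^n)/\Delta t$. Then $\bm{U}^{n+1}-\bm{U}^{n-1}=\Delta t(\bm{D}^n+\bm{D}^{n-1})$, and the second difference is $(\bm{D}^n-\bm{D}^{n-1})/\Delta t$. Taking the real part of the inner product of \eqref{eq:global_fully_discrete} with $\Delta t(\bm{D}^n+\bm{D}^{n-1})$ yields two pieces.

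\textbf{Mass term.} By polarization it equals $\|\bm{D}^n\|_{\mathbb{M}^h}^2-\|\bm{D}^{n-1}\|_{\mathbb{M}^h}^2$.

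\textbf{Stiffness term.} This is $\operatorname{Re}\langle \bm{U}^{n+1}-\bm{U}^{n-1},\mathbb{K}^h\bm{U}^n\rangle$, which is not yet a telescoping difference. To make it one, use the identity
\[
2\operatorname{Re}\langle \bm{U}^{n+1},\mathbb{K}^h\bm{U}^n\rangle=\|\bm{U}^{n+1}\|_{\mathbb{K}^h}^2+\|\bm{U}^n\|_{\mathbb{K}^h}^2-(\Delta t)^2\|\bm{D}^n\|_{\mathbb{K}^h}^2,
\]
and the analogous identity at level $n-1$. Subtracting them shows that the stiffness term equals $\tfrac12\bigl[\|\bm{U}^{n+1}\|^2_{\mathbb{K}^h}-\|\bm{U}^{n-1}\|^2_{\mathbb{K}^h}-(\Delta t)^2(\|\bm{D}^n\|^2_{\mathbb{K}^h}-\|\bm{D}^{n-1}\|^2_{\mathbb{K}^h})\bigr]$.

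Adding the two pieces and dividing by $2$ gives $\mathcal{E}_h^n-\mathcal{E}_h^{n-1}=0$. The term $\|\bm{U}^n\|^2_{\mathbb{K}^h}/4$ appears in both energies and cancels, leaving exactly the averaged-potential and correction structure of \eqref{eq:discrete_energy}.

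The main obstacle is not the algebra but the justification of self-adjointness of $\mathbb{K}^h$ on the complex $\ell^2$ space from the transposition relations of Lemma~\ref{lem:matrix_symmetry}. This requires the blocks to be real, which holds since they come from real integrals of monomials, and it requires the index shift in the infinite sums, which is valid by absolute convergence in $\ell^2$. Once self-adjointness is established, every cross term is real up to conjugation and the telescoping goes through exactly as sketched.
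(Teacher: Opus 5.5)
Your proof is correct. Pairing the scheme with $\bm{U}^{n+1}-\bm{U}^{n-1}=\Delta t(\bm{D}^n+\bm{D}^{n-1})$, using the self-adjointness of $\mathbb{M}^h$ and $\mathbb{K}^h$ that follows from Lemma~\ref{lem:matrix_symmetry} with real blocks, and applying your identity $2\operatorname{Re}\langle \bm{U}^{n+1},\mathbb{K}^h\bm{U}^n\rangle=\|\bm{U}^{n+1}\|_{\mathbb{K}^h}^2+\|\bm{U}^n\|_{\mathbb{K}^h}^2-(\Delta t)^2\|\bm{D}^n\|_{\mathbb{K}^h}^2$ gives exactly $\mathcal{E}_h^n=\mathcal{E}_h^{n-1}$. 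The paper states this lemma without proof, so there is no written argument to compare against. Your leapfrog energy computation is the standard one it implicitly relies on. Your identity also shows $\mathcal{E}_h^n=\tfrac12\|\bm{D}^n\|_{\mathbb{M}^h}^2+\tfrac12\operatorname{Re}\langle \bm{U}^{n+1},\mathbb{K}^h\bm{U}^n\rangle$, which is precisely the form the paper uses in the proof of Lemma~\ref{energy_lower_bound}.
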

\subsection{Frequency-Domain Formulation}
We analyze the dispersion of \eqref{eq:global_fully_discrete} via the semi-discrete Fourier transform (SDFT). For $\bm{F} = \{\bm{f}_j\}_{j \in \mathbb{Z}} \in \ell^2(\mathbb{Z}; \mathbb{C}^{k+1})$, its SDFT $\widehat{\bm{F}}(\xi)$ over $\Pi_h = [-\frac{\pi}{h}, \frac{\pi}{h}]$ and the inverse transform are given by
\begin{equation}\label{fourier transform}
    \widehat{\bm{F}}(\xi) = h \sum_{j \in \mathbb{Z}} \bm{f}_j e^{-\sqrt{-1} \xi x_j},\qquad\quad \bm{f}_j = \frac{1}{2\pi} \int_{-\pi/h}^{\pi/h} \widehat{\bm{F}}(\xi) e^{\sqrt{-1} \xi x_j} \mathrm{d}\xi.
\end{equation}

Application of the SDFT to \eqref{eq:local_fully_discrete},  yields a family of decoupled ODEs in the frequency domain. For any  $\xi \in \Pi_h$, the dynamics of $\widehat{\bm{U}}^n(\xi)$ are given by
\begin{equation}\label{fully Fourier mode}
\bm{M}^h (\widehat{\bm{U}}^{n+1}+\widehat{\bm{U}}^{n-1}-2\widehat{\bm{U}}^n)(\xi)+ (\Delta t)^2\mathcal{K}^h(\xi) \widehat{\bm{U}}^n(\xi) = \bm{0},
\end{equation}
where $\mathcal{K}^h(\xi)= \bm{K}^h_0 + \bm{K}^h_{-1} e^{-\sqrt{-1}\xi h} + \bm{K}^h_{+1} e^{\sqrt{-1} \xi h}$ is the $(k+1) \times (k+1)$ matrix.
Thus, the analysis of the fully discrete scheme \eqref{eq:global_fully_discrete} reduces to the study of a family
of $(k+1)\times(k+1)$ generalized eigenvalue problems parameterized by $\xi$
\begin{equation}\label{fully eigenvalue problem}
(\mathcal{K}^h(\xi)  - \sigma(\xi) \bm{M}^h) \bm{v}(\xi)=\bm{0}, \qquad \bm{v}(\xi) \in \mathbb{C}^{k+1} \setminus \{\bm{0}\},
\end{equation}
where 
$\sigma(\xi)=\frac{4}{(\Delta t)^2} \sin^2\left(\frac{\omega(\xi) \Delta t}{2}\right)$ and $\max\limits_{\xi}\sigma(\xi) \le \frac{4}{(\Delta t)^2}$.

\section{Main Results}\label{Section main results}
In this section, we present the main theoretical results characterizing the fully discrete $P^k$-LDG scheme.
\subsection{Spectral Structure}
\textbf{Our first main result} establishes the algebraic structure of the eigensystem \eqref{fully eigenvalue problem}.
\begin{theorem}\label{thm:spectral_structure}
The eigenvalue problem \eqref{fully eigenvalue problem} admits $k+1$ real-analytic eigenpairs \(\{\sigma_j(\xi), \bm{v}_j(\xi)\}_{j=1}^{k+1}\) forming an $\bm{M}^h$-orthonormal basis, with the following properties
\begin{itemize}
    \item[{\tiny $\blacksquare$}] A unique principal branch, denoted by $(\sigma_{ph}(\xi), \bm{v}_{ph}(\xi))$, satisfies
    \begin{equation*}
        \sigma_{ph}(0)=0,\qquad\quad \bm{v}_{ph}(0)\propto \bm{e}_0,\qquad\quad 
        \sigma_{ph}''(0) = 2,
    \end{equation*}
    while the remaining $k$ branches, denoted by $(\sigma_{sp,i}(\xi), \bm{v}_{sp,i}(\xi))$, satisfy
    \begin{equation*}
        \sigma_{sp,i}(0) > 0,\qquad\ \ i=1,\dots,k.
    \end{equation*}
    \item[{\tiny $\blacksquare$}] The eigenvalues satisfy
    \begin{equation*}
        \sigma_{ph}(\xi) = \sigma_{ph}(-\xi), \quad \text{and} \quad \sigma_{sp,i}(\xi) = \sigma_{sp,i}(-\xi), \quad i=1, \dots, k.
    \end{equation*}
\end{itemize}
\end{theorem}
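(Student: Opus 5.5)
The plan is to reduce everything to structural properties of the symbol $\mathcal{K}^h(\xi)$ and the mass matrix $\bm{M}^h$, and then apply Kato's analytic perturbation theory.

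\textbf{Step 1: structure of the symbol.} By Lemma~\ref{lem:matrix_symmetry}, $\mathcal{K}^h(\xi)^* = \bm{K}_0^h + \bm{K}_{+1}^h e^{-\sqrt{-1}\xi h} + \bm{K}_{-1}^h e^{\sqrt{-1}\xi h} = \mathcal{K}^h(\xi)$, since the blocks are real. So $\mathcal{K}^h(\xi)$ is Hermitian. It is also an entire (trigonometric-polynomial) function of $\xi$.

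To show positive semi-definiteness, I would return to the LDG derivation. Eliminating $q_h$ through \eqref{eq:weak_a} gives $\langle \bm U,\mathbb{K}^h\bm U\rangle = \|q_h\|_{L^2}^2$ in suitable units, where $q_h$ is the LDG derivative of $u_h$. Fourier-transforming then yields $\bm v^*\mathcal{K}^h(\xi)\bm v = h\,\bm w(\xi)^*\bm M^h \bm w(\xi)\ge 0$ for a linear map $\bm v\mapsto \bm w$.

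Since $\bm M^h$ is symmetric positive definite, I reduce to the Hermitian matrix $A(\xi)=(\bm M^h)^{-1/2}\mathcal{K}^h(\xi)(\bm M^h)^{-1/2}$. This matrix is analytic in real $\xi$. Rellich--Kato \cite[Theorem 6.1]{MR1335452} then gives $k+1$ real-analytic eigenvalue branches with an analytic orthonormal eigenbasis, even through crossings. Mapping back by $(\bm M^h)^{-1/2}$ produces an analytic $\bm M^h$-orthonormal basis $\bm v_j(\xi)$, and the nonnegativity gives $\sigma_j\ge 0$.

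\textbf{Step 2: the kernel at $\xi=0$.} At $\xi=0$, $\bm v^*\mathcal{K}^h(0)\bm v=0$ iff the LDG derivative of the periodic-in-$j$ (constant across cells) function $u_h$ vanishes. With alternating flux $\widetilde{u_h}=u_h^-$, \eqref{eq:weak_a} with $q_h=0$ says $\int_{I_j}u_h v_x = u_h^-(x_{j+1/2})v^-_{j+1/2}-u_h^-(x_{j-1/2})v^+_{j-1/2}$ for all $v\in P^k$.

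Testing with $v=1$ gives $u_h^-$ equal at both ends, which is automatic for identical cells. Testing with $v=x$ and higher powers forces $u_h$ to be continuous and $u_h'=0$. An induction on the degree of the test function (or the known invertibility of the LDG derivative on the mean-zero part) gives $u_h\equiv$ const. Hence $\ker\mathcal{K}^h(0)=\mathrm{span}\{\bm e_0\}$.

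So exactly one eigenvalue vanishes at $0$; this is the principal branch with $\bm v_{ph}(0)\propto\bm e_0$, and the other $k$ satisfy $\sigma_{sp,i}(0)>0$. Because the zero eigenvalue is simple, the principal branch is isolated near $0$, which makes its labelling well defined.

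\textbf{Step 3: $\sigma_{ph}''(0)=2$.} Since $\sigma_{ph}\ge0$ is analytic with $\sigma_{ph}(0)=0$, we have $\sigma_{ph}'(0)=0$. Second-order Rayleigh--Schrödinger perturbation gives
\[
\tfrac12\sigma''(0)=\frac{\bm v_0^*\tfrac12\mathcal{K}''(0)\bm v_0 - \bm v_0^*\mathcal{K}'(0)\,\mathcal{K}(0)^{+}\,\mathcal{K}'(0)\bm v_0}{\bm v_0^*\bm M^h\bm v_0},
\]
with $\bm v_0=\bm e_0$ and $\mathcal{K}(0)^+$ the pseudoinverse restricted to the range.

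Rather than compute the matrices explicitly, I would use the consistency of LDG instead. Take the exact plane wave $u=e^{\sqrt{-1}\xi x}$. Its $L^2$-projection $\bm v(\xi)=\bm e_0 + O(\xi h)$ satisfies $\bm v^*\mathcal K^h(\xi)\bm v = \xi^2\,\bm v^*\bm M^h\bm v+O(\xi^3)$, because the LDG derivative reproduces polynomials and so $q_h$ equals the projection of $\sqrt{-1}\xi u$ up to higher order. A variational (min-max) argument near the simple eigenvalue, combined with the spectral gap from Step 2, then yields $\sigma_{ph}(\xi)=\xi^2+O(\xi^3)$.

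I expect this step to be the main obstacle. The difficulty is making the consistency estimate sharp enough at order $\xi^2$ uniformly in the $\bm M^h$-normalization. A possible fallback is to use the exact polynomial reproduction of the alternating-flux derivative, which makes the $\xi^2$ coefficient exact.

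\textbf{Step 4: evenness.} The blocks are real, so $\overline{\mathcal K^h(\xi)}=\mathcal K^h(-\xi)$. The spectrum of $\mathcal K^h(-\xi)$ relative to $\bm M^h$ is therefore the complex conjugate of that of $\mathcal K^h(\xi)$, which is the same set because the eigenvalues are real. Thus the unordered eigenvalue sets at $\xi$ and $-\xi$ coincide.

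To upgrade this to branchwise evenness, note that $\tilde\sigma_j(\xi):=\sigma_j(-\xi)$ is again a family of analytic branches of the same problem. By uniqueness of the analytic branches (up to permutation, away from identically coinciding ones), $\tilde\sigma_j$ matches some $\sigma_{\pi(j)}$. At $\xi=0$ the principal branch is fixed by Step 2, giving $\sigma_{ph}$ even. For the spurious branches, $\pi$ must preserve the values at $0$; for branches that coincide at $0$ but differ elsewhere, I would relabel them so that each is even. This is possible because if $\sigma_a(-\xi)=\sigma_b(\xi)$, then the even-in-$\xi$ Taylor structure forces $\sigma_a$ and $\sigma_b$ to agree to infinite order, hence to coincide.
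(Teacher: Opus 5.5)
\textbf{Step 3 has a genuine gap.} A min--max argument with a test vector $\bm v(\xi)$ only gives $\sigma_{ph}(\xi)\le \bm v^H\mathcal{K}^h(\xi)\bm v/\bm v^H\bm M^h\bm v=\xi^2+O(\xi^3)$, i.e.\ $\sigma_{ph}''(0)\le 2$. The spectral gap gives the reverse inequality only through a residual (Temple-type) bound. That bound needs the residual \emph{vector} $(\mathcal{K}^h(\xi)-\xi^2\bm M^h)\bm v(\xi)$ to be $o(\xi)$, so $\bm v(\xi)$ must carry the correct first-order eigenvector correction. This does hold for your $L^2$ projection when $k\ge1$, because the alternating-flux derivative is exact on globally linear functions. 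But it is a vector-level statement that your quadratic-form estimate does not supply.

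Your Rayleigh--Schr\"odinger formula is exactly right, and you should not drop its second term as the paper does. The paper's shortcut ($\mathcal{K}^{h\prime}(0)\bm e_0=\bm 0$ together with $\bm e_0^T(\bm K^h_{-1}+\bm K^h_{+1})\bm e_0=-2/h$) holds only for $k=0$. The alternating-flux derivative of a piecewise constant is its jump times the Riesz representer of $v\mapsto v^+_{j-\frac12}$ in $P^k(I_j)$, whose squared norm is $(k+1)^2/h$. Hence $\bm e_0^H\mathcal{K}^h(\xi)\bm e_0=\frac{4(k+1)^2}{h}\sin^2\frac{\xi h}{2}$ and $\frac1h\bm e_0^T\mathcal{K}^{h\prime\prime}(0)\bm e_0=2(k+1)^2$. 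Moreover $\mathcal{K}^{h\prime}(0)\bm e_0\neq\bm 0$ for $k\ge1$; for $k=1$ it equals $(0,-6\sqrt{-1})^T$.

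To close the step, use the factorization behind your Step 1. Writing \eqref{eq:weak_a} as $\bm M^h\bm q_j=A\bm u_j+B\bm u_{j-1}$, one has $\mathcal{K}^h(\xi)=G(\xi)^H(\bm M^h)^{-1}G(\xi)$ with $G(\xi)=A+Be^{-\sqrt{-1}\xi h}$. Here $G(0)\bm e_0=\bm 0$, $\bm e_0^TG(0)=\bm 0^T$ (test with $v\equiv1$), and $\ker G(0)=\operatorname{span}\{\bm e_0\}$ by your Step 2. With $P=(\bm M^h)^{-1/2}G(0)$ and $y=(\bm M^h)^{-1/2}G'(0)\bm e_0$, your formula reads
\[
\tfrac12\sigma_{ph}''(0)\,h=\|y\|^2-\|\Pi_{\operatorname{ran}P}\,y\|^2=\|\Pi_{\ker P^H}\,y\|^2=\frac{|\bm e_0^TG'(0)\bm e_0|^2}{h}=h\,|B_{00}|^2=h,
\]
because $\ker P^H=\operatorname{span}\{(\bm M^h)^{1/2}\bm e_0\}$, $G'(0)=-\sqrt{-1}hB$ and $B_{00}=-1$. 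Hence $\sigma_{ph}''(0)=2$ for every $k$.

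\textbf{Step 4 rests on a false claim.} From $\sigma_a(-\xi)=\sigma_b(\xi)$ you only get $\sigma_a^{(n)}(0)=(-1)^n\sigma_b^{(n)}(0)$. Individual branches have no a priori even Taylor structure (that is what is to be proved), and nothing forces the odd coefficients to vanish. For instance, take $\mathcal{K}(\xi)=\bigl(\begin{smallmatrix}1&\sqrt{-1}\xi\\-\sqrt{-1}\xi&1\end{smallmatrix}\bigr)$ with identity mass matrix. It is Hermitian, positive definite for $|\xi|<1$, and satisfies $\mathcal{K}(-\xi)=\overline{\mathcal{K}(\xi)}$. Yet its analytic branches $1\pm\xi$ are exchanged by $\xi\mapsto-\xi$, and no relabeling makes them both analytic and even.

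What your permutation argument does prove is evenness of every branch whose value at $\xi=0$ is a simple eigenvalue of $(\mathcal{K}^h(0),\bm M^h)$. This covers $\sigma_{ph}$. For the spurious branches you must either show that the nonzero eigenvalues at $\xi=0$ are simple (easily checked by hand for $k\le2$) or weaken the claim to the eigenvalue set. The paper's proof (``by the uniqueness of the eigenvalues'') passes over the same point.

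The remaining steps are sound. Your factorization proof of positive semi-definiteness is more direct than the paper's Parseval/contradiction argument. The kernel computation in Step 2 becomes rigorous with the single test function $v=(x-x_{j-\frac12})(u_h)_x$, as in the paper.
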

Theorem~\ref{thm:spectral_structure} guarantees real non-negative eigenvalues, yielding well-defined temporal frequencies. This algebraic isolation at the origin naturally categorizes the discrete modes. 
\begin{definition}\label{definition of fully physical mode}
The discrete temporal frequency is defined as
\[\omega_m(\xi) := \mathrm{sign}(\xi)\frac{2}{\Delta t}\arcsin{\left(\frac{\sqrt{\sigma_m(\xi)}\Delta t}{2}\right)},\quad m \in \{ph, sp,1, \dots, sp,k\}.\] 
The pair $(\omega_{ph}(\xi), \bm{v}_{ph}(\xi))$ with $\omega_{ph}(0) = 0$ is the physical mode, while the remaining $k$ pairs $\{(\omega_{sp,i}(\xi), \bm{v}_{sp,i}(\xi))\}_{i=1}^k$ are spurious modes.
\end{definition}

\subsection{Trapping Mechanism}
Based on the  physical-spurious separation in Theorem~\ref{thm:spectral_structure}, \textbf{our second main result} characterizes the asymptotic behavior of these velocities at the long-wave limit and the highest-frequency limit of $\Pi_h$.
\begin{theorem}\label{thm:fully_group_velocity_limits}
The group velocities exhibit the following asymptotic behaviors.
\begin{itemize}
    \item [{\tiny $\blacksquare$}] For the physical mode:
    \[
        \lim_{\xi\to0} v_g^{ph}(\xi)  := \lim_{\xi\to0}\frac{\mathrm{d}\omega_{ph}(\xi)}{\mathrm{d}\xi}=1, \qquad
        \lim_{\xi\to\pm\frac{\pi}{h}} v_g^{ph}(\xi) =
        \begin{cases}
            1, & \text{if } (k,\Delta t)=(0,h),\\[2pt]
            0, & \text{otherwise}.
        \end{cases}
    \]
    
    \item [{\tiny $\blacksquare$}] For the spurious modes, provided $|\omega_{sp,i}| \neq \frac{\pi}{\Delta t}$ at these limits: 
    \[
        \lim_{\xi\to0^{\pm}} v_g^{sp,i}(\xi) := \lim_{\xi\to0^{\pm}}\frac{\mathrm{d}\omega_{sp,i}(\xi)}{\mathrm{d}\xi}= 0, \qquad
        \lim_{\xi\to\pm\frac{\pi}{h}} v_g^{sp,i}(\xi) = 0,\qquad i=1,\cdots, k.
    \]
\end{itemize}
\end{theorem}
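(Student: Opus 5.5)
The plan is to differentiate the dispersion relation $\sin^2(\omega\Delta t/2)=\sigma(\xi)(\Delta t)^2/4$ from Definition~\ref{definition of fully physical mode}. Wherever $\sigma_m(\xi)>0$ and $\sin(\omega_m\Delta t)\neq0$, this gives
\[
v_g^m(\xi)=\frac{\sigma_m'(\xi)\,\Delta t}{2\sin(\omega_m(\xi)\Delta t)}=\mathrm{sign}(\xi)\,\frac{\sigma_m'(\xi)}{2\sqrt{\sigma_m(\xi)}\sqrt{1-\sigma_m(\xi)(\Delta t)^2/4}}.
\]
Each limit therefore reduces to controlling $\sigma_m'$ at $\xi=0$ and $\xi=\pm\pi/h$, together with checking that the denominator does not vanish there. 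The key inputs are the real analyticity and evenness of every branch from Theorem~\ref{thm:spectral_structure}. A second structural fact is also needed: $\mathcal{K}^h(\xi)$ is $2\pi/h$-periodic in $\xi$, so each analytic branch, possibly after relabeling, is periodic and even. Evenness plus periodicity gives $\sigma_m(\pi/h+\eta)=\sigma_m(-\pi/h-\eta)=\sigma_m(\pi/h-\eta)$. Hence $\sigma_m$ is even about $\pi/h$ as well as about $0$, and $\sigma_m'(0)=\sigma_m'(\pm\pi/h)=0$ for every branch.

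\textbf{Spurious modes.} By Theorem~\ref{thm:spectral_structure}, $\sigma_{sp,i}(0)>0$. The hypothesis $|\omega_{sp,i}|\ne\pi/\Delta t$ means $\sigma_{sp,i}(\Delta t)^2/4\neq1$, so the denominator is bounded away from zero near $0$ and near $\pm\pi/h$, provided $\sigma_{sp,i}(\pm\pi/h)>0$. The numerator tends to $\sigma_{sp,i}'=0$, so both limits vanish. The one-sided limits at $0^\pm$ just absorb the $\mathrm{sign}(\xi)$ factor.

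At $\pm\pi/h$ we must also handle a branch whose eigenvalue vanishes there. In that case I would use the second-order Taylor expansion $\sigma\approx c\,\eta^2$, with $\eta=\xi\mp\pi/h$. This gives $v_g\to\pm\sqrt{c}\neq0$, so the zero case must be excluded. I would rule it out by showing that $\mathcal{K}^h(\xi)$ has trivial kernel for $\xi\neq0$. A kernel vector would correspond to a discrete field $u_h$ with $\xi$-Bloch periodicity whose LDG derivative $q_h$ vanishes. With the alternating flux this forces $u_h$ to be piecewise constant with zero jumps, hence constant. A constant is compatible with the phase $e^{\sqrt{-1}\xi h}$ only when $\xi=0$. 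The same argument shows $\sigma_{ph}>0$ on $\Pi_h\setminus\{0\}$.

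\textbf{Physical mode at $\xi\to0$.} Since $\sigma_{ph}(\xi)=\xi^2+O(\xi^4)$ (evenness together with $\sigma_{ph}''(0)=2$), we get $\sigma_{ph}'(\xi)/(2\sqrt{\sigma_{ph}})\to\mathrm{sign}(\xi)$. The time factor $\sqrt{1-\sigma_{ph}(\Delta t)^2/4}$ tends to $1$, so $v_g^{ph}\to1$.

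\textbf{Physical mode at $\pm\pi/h$.} The argument above gives $\sigma_{ph}(\pi/h)>0$ and $\sigma_{ph}'(\pi/h)=0$, so $v_g^{ph}\to0$ unless the CFL factor degenerates, i.e. unless $\sigma_{ph}(\pi/h)=4/(\Delta t)^2$. In that degenerate case I would write $s=\sigma_{ph}(\Delta t)^2/4=1-d\,\eta^2+O(\eta^4)$. The quotient $\sigma'/\sqrt{1-s}$ then has a finite limit proportional to $\sqrt d$.

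For $k=0$ the computation is explicit: $\sigma=4\sin^2(\xi h/2)/h^2$. The degenerate case occurs exactly when $\Delta t=h$, where $\omega=\xi$ and $v_g\equiv1$; for $\Delta t<h$ the limit is $0$.

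For $k\ge1$, I expect the main obstacle to be showing that the degeneracy either does not occur or still yields zero velocity. The first thing I would try is to show that stability forces the strict inequality $\sigma_{ph}(\pi/h)<\max_m\sigma_m\le4/(\Delta t)^2$. The intended route is that the physical branch at $\pi/h$ lies strictly below some spurious eigenvalue, by an interlacing or Rayleigh-quotient comparison. This uses the $\bm{M}^h$-orthonormal basis and the fact that $\mathcal{K}^h(\pi/h)$ is a real symmetric matrix, so the branches are ordered by the min–max principle. If strictness fails, I would fall back on the reading that $v_g^{ph}$ is formally $\sigma'/(\ldots)$ evaluated with $\sigma'=0$, obtained by a direct expansion.
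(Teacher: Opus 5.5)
Your outline is the paper's proof. It uses the same velocity formula, evenness plus $2\pi/h$-periodicity to get $\sigma_m'(0)=\sigma_m'(\pm\frac{\pi}{h})=0$, the split according to whether $\sigma_m(\Delta t)^2=4$ at the endpoint, and the explicit $k=0$ computation. The one different ingredient is how you exclude $\sigma_m(\pm\frac{\pi}{h})=0$. The paper truncates the alternating sequence $\bm{v}(-1)^j$ and counts cells in the global quadratic form. You instead read $\bm{v}^H\mathcal{K}^h(\xi)\bm{v}$ as the cell $L^2$-norm of the LDG derivative of a Bloch wave (the identity the paper uses at $\xi=0$) and show that $q_h=0$ forces a constant. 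That argument is valid and shorter, and it gives more: $\sigma_m(\xi)>0$ for every $\xi\neq0$.

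The gap is the physical mode at $\pm\frac{\pi}{h}$ for $k\ge1$.

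\emph{The fallback fails.} If $\sigma_{ph}(\frac{\pi}{h})=\frac{4}{(\Delta t)^2}$, the quotient is $0/0$. Your own expansion gives a limit that is a nonzero multiple of $\sqrt{d}$, so you cannot ``evaluate with $\sigma'=0$''.

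\emph{The min--max route is not yet an argument.} Min--max orders eigenvalues at fixed $\xi$, but $\sigma_{ph}$ is the Kato branch continued from $\xi=0$. Without excluding crossings on $(0,\frac{\pi}{h})$ you do not know its rank at $\frac{\pi}{h}$. Even if it is the lowest eigenvalue there, you would still need $\mathcal{K}^h(\frac{\pi}{h})\neq c\,\bm{M}^h$.

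What is really needed is $\sigma_{ph}(\pm\frac{\pi}{h})<\frac{4}{(\Delta t)^2}$. The paper does not prove this either: it only distinguishes the two denominator cases and computes $k=0$. So its ``$0$ otherwise'' holds only under strict physical stability, which it later imposes as (A1). You should either add that hypothesis or check it for each $k$. For $k=1$ it is a hand computation. At $\xi=\frac{\pi}{h}$ one gets $h^2\sigma\in\{14-2\sqrt{13},\,14+2\sqrt{13}\}$, while the global maximum $36/h^2$ is attained by the spurious branch at $\xi=0$. Stability ($\lambda\le\frac13$) therefore forces $\sigma_{ph}(\frac{\pi}{h})\approx 6.79/h^2<\frac{4}{(\Delta t)^2}$.

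A smaller point, which the paper also glosses over: ``possibly after relabeling'' does not make each analytic branch $2\pi/h$-periodic. The shift permutes Kato branches, and two even branches can be exchanged through a crossing at $\frac{\pi}{h}$ with opposite nonzero slopes. For example, the family $2I+\frac12\begin{pmatrix}1+\cos\xi & \sqrt{-1}\sin\xi\\ -\sqrt{-1}\sin\xi & -1-\cos\xi\end{pmatrix}$ has the same symbol structure and branches $2\pm\cos(\xi/2)$. What does hold is that the spectrum at $\frac{\pi}{h}+\eta$ coincides with that at $\frac{\pi}{h}-\eta$. Hence $\sigma_m'(\pm\frac{\pi}{h})=0$ whenever $\sigma_m(\pm\frac{\pi}{h})$ is simple, and that simplicity should be assumed or checked.
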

\begin{remark}
We remark that spurious modes do not always exhibit zero group velocity at the long-wave or highest-frequency limits. They can acquire non-zero velocities when $\omega_{sp,i}(0^{\pm})=\pm\frac{\pi}{\Delta t}$ or $\omega_{sp,i}(\pm\frac{\pi}{h})=\pm\frac{\pi}{\Delta t}$, which results from a cancellation between the temporal singularity and the flat spatial dispersion.
\end{remark} 

By characterizing this trapping mechanism in Theorem~\ref{thm:fully_group_velocity_limits}, we clarify the propagation failure of high-frequency numerical wave packets and demonstrate the departure from the uniform propagation of continuous waves, revealing the root cause of observability loss.

\subsection{Exponential Loss}
We investigate the uniform observability of the fully discrete $P^k$-LDG scheme \eqref{eq:global_fully_discrete}. For an observation domain $\Omega = \mathbb{R} \setminus [-1,1]$ and time $T$, the scheme is uniformly observable if there exists a constant $C_T > 0$, independent of $h$ and $\Delta t$, such that any numerical solution satisfies the following inequality:
\begin{equation} \label{eq:discrete_observability}
    \mathcal{E}_h^{0} \le C_T \Delta t \sum_{n=0}^{N-1} \mathcal{E}_{h,\Omega}^{n}.
\end{equation}

\textbf{Our third main result} establishes the loss of uniform observability induced by the trapping mechanism of Theorem~\ref{thm:fully_group_velocity_limits}. To achieve this, we construct a high-frequency wave packet. Consider the discrete initial data pair $(\widehat{\bm{U}}^0(\xi), \widehat{\bm{U}}^1(\xi))$ defined in $\Pi_h$ as follows
\begin{subequations} \label{eq:initial_data_construction}
\begin{align}
    \widehat{\bm{U}}^0(\xi) &= r(\xi) h^{\gamma/2} \chi_{\rho}(\xi) e^{-\sqrt{-1} \xi x_c} \bm{v}_{ph}(\xi), \label{eq:initial_u0} \\
    \widehat{\bm{U}}^1(\xi) &= e^{-\sqrt{-1} \omega_{ph}(\xi) \Delta t} \widehat{\bm{U}}^0(\xi), \label{eq:initial_u1}
\end{align}
\end{subequations}
where
\begin{itemize}
    \item[{\tiny $\blacksquare$}] $r(\xi) = \left( \sigma_{ph}(\xi) \langle \frac{1}{h}\bm{M}^h \bm{v}_{ph}(\xi), \bm{v}_{ph}(\xi) \rangle \right)^{-\frac{1}{2}}$;
    \item[{\tiny $\blacksquare$}] $x_c \in (-1, 1)$ is an arbitrary fixed point; 
    \item[{\tiny $\blacksquare$}] $\gamma \in (0, 1)$ is an arbitrary fixed constant;
    \item[{\tiny $\blacksquare$}] $\chi_\rho(\xi) = \psi\left(\frac{\xi - \xi_c}{\rho}\right)$ with $\psi \in C_c^\infty([-1, 1]) \cap G^s$ ($s>1$), centered at $\xi_c$ near $\frac{\pi}{h}$ with $\rho = h^{-\gamma}$.
\end{itemize}
By localizing the initial data around the highest frequency via a Gevrey cut-off function and projecting it onto the physical mode eigenvector $\bm{v}_{ph}(\xi)$, we exclude the interference from spurious modes. The phase shift $e^{-\sqrt{-1} \omega_{ph}(\xi) \Delta t}$ enforces a unidirectional wave propagation. Governed by the trapping mechanism, the energy of this wave packet fails to propagate into the observation region within any finite time $T$.

To quantify the observability loss induced by this mechanism, we formulate the following Assumption.
\begin{assumption} \label{assump:observability}
    We assume that
\begin{enumerate}
    \renewcommand{\labelenumi}{\textbf{(A\arabic{enumi})}}
    \item The strict stability condition holds:
    \(
        \max\limits_{\xi}\sigma_{ph}(\xi) < \frac{4}{(\Delta t)^2}.
    \)
    \item The  stability condition holds:
    \(
        \max\limits_{\xi}\sigma_{sp,i}(\xi) \le \frac{4}{(\Delta t)^2},\,i=1,\cdots,k.
    \)
    \item The mesh size $h$ is sufficiently small such that:
    \(
        \xi_c + h^{-\gamma} < \frac{\pi}{h}.
    \)
\end{enumerate}
\end{assumption}

The following Theorem establish the exponential blow-up of the observability constant under Assumption~\ref{assump:observability} via a quantitative non-stationary phase analysis. 
\begin{theorem}\label{non_uniform_observability}
For any fixed $T>0$ and any arbitrarily small $\varepsilon > 0$, uniform observability fails for the scheme \eqref{eq:global_fully_discrete}, with $C_T$ growing exponentially as $h \to 0$
\begin{equation*}
    C_T \geq C_{1,3}(k,T,s) \exp\left(\frac{C_4(k,\lambda,s)}{h^{1-\varepsilon}}\right),
\end{equation*}
where the positive constants $C_{1,3}(k,T,s)$ and $C_4(k,\lambda,s)$ are independent of h.
\end{theorem}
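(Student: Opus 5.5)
We will prove the statement by exhibiting the wave packet \eqref{eq:initial_data_construction} and showing that its total energy is of unit order, while the observed energy $\Delta t\sum_{n=0}^{N-1}\mathcal{E}_{h,\Omega}^n$ is exponentially small. Since $C_T$ must dominate the ratio of these two quantities, the bound follows.

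\textbf{Step 1: explicit solution and energy normalization.} Because $\widehat{\bm{U}}^0$ lies on the physical eigenvector, and $\widehat{\bm{U}}^1$ carries the phase $e^{-\sqrt{-1}\omega_{ph}\Delta t}$, the recursion \eqref{fully Fourier mode} gives
\[
\widehat{\bm{U}}^n(\xi)=e^{-\sqrt{-1}n\omega_{ph}(\xi)\Delta t}\,\widehat{\bm{U}}^0(\xi).
\]
This uses Theorem~\ref{thm:spectral_structure} and the definition of $\omega_{ph}$; (A1) makes $\omega_{ph}$ real-analytic on the support of $\chi_\rho$, since $\arcsin$ is evaluated away from $1$.

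We then compute $\mathcal{E}_h^0$ via Parseval for the SDFT. The $\bm M^h$-orthogonality reduces every term to scalars involving $\sigma_{ph}$ and $\langle \bm M^h\bm v_{ph},\bm v_{ph}\rangle$. The kinetic, averaged potential and correction terms combine into a multiple of $\sigma_{ph}(\xi)\,|\widehat{\bm U}^0|^2_{\bm M^h}$. By the choice of $r(\xi)$ this equals $h^{\gamma}|\chi_\rho|^2/(2\pi)$ up to constants, so
\[
\mathcal{E}_h^0\simeq h^\gamma\int|\chi_\rho|^2\,\mathrm{d}\xi=h^\gamma\rho\,\|\psi\|_{L^2}^2=\|\psi\|_{L^2}^2,
\]
a constant independent of $h$.

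\textbf{Step 2: localization away from $\Omega$.} For $x_j\in\Omega$ we write
\[
\bm u_j^n=\frac{1}{2\pi}\int \chi_\rho(\xi)\,a(\xi)\,e^{\sqrt{-1}\Phi_n(\xi)}\,\mathrm{d}\xi,
\]
where $a$ collects $r h^{\gamma/2}\bm v_{ph}$ and the phase is $\Phi_n(\xi)=\xi(x_j-x_c)-n\Delta t\,\omega_{ph}(\xi)$. The derivative $\Phi_n'=(x_j-x_c)-t_n v_g^{ph}(\xi)$ must be bounded below. By Theorem~\ref{thm:fully_group_velocity_limits}, $v_g^{ph}(\xi)\to0$ as $\xi\to\pi/h$. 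Choose $\xi_c$ with $\xi_c+\rho<\pi/h$, as in (A3), and close enough to $\pi/h$ (in the scaled variable $\xi h$) that $|v_g^{ph}|\le\delta$ on $\mathrm{supp}\,\chi_\rho$, where $\delta T<\tfrac12\,\mathrm{dist}(x_c,\Omega)$. Since $\omega_{ph}(\xi)=h^{-1}\Theta(\xi h)$ with $\Theta$ depending only on $(k,\lambda)$, $v_g^{ph}$ is a function of $\xi h$ alone. A fixed scaled window near $\pi$ therefore works uniformly in $h$, and the window of half-width $\rho h=h^{1-\gamma}\to0$ fits inside it. Then $|\Phi_n'|\ge c(1+|x_j|)$ for all $t_n\le T$ and $x_j\in\Omega$.

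\textbf{Step 3: quantitative non-stationary phase (main obstacle).} We repeatedly integrate by parts using $L=(\sqrt{-1}\Phi_n')^{-1}\partial_\xi$, $m$ times. Each derivative falling on $\chi_\rho$ produces a factor $\rho^{-1}$ together with the Gevrey bound $\|\psi^{(m)}\|\le C^{m+1}(m!)^s$. Derivatives falling on $a$, $1/\Phi_n'$ or $\omega_{ph}''$ contribute factors of size $h$ per derivative, by the analyticity in $\xi h$ from Kato and the scaling above. These are smaller than $\rho^{-1}=h^{\gamma}$ whenever $\gamma<1$, so they are absorbed using analytic (Cauchy) bounds; this combinatorial bookkeeping, where one must control the nested derivatives of $1/\Phi_n'$ with factorial growth no worse than $(m!)^s$, is the delicate point. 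The resulting estimate is
\[
|\bm u_j^n|\lesssim \rho\,h^{\gamma/2}\,\|a\|\,C^m(m!)^s\,\bigl(\rho\,c(1+|x_j|)\bigr)^{-m}.
\]
Optimizing in $m$ with $m\sim(\rho/C)^{1/s}$ gives $\exp(-c\rho^{1/s})=\exp(-c\,h^{-\gamma/s})$.

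The amplitude $r(\xi)\sim\sigma_{ph}^{-1/2}$ is bounded near $\pi/h$, since $\sigma_{ph}\sim h^{-2}$ there. Summing over $j$ using the decay in $|x_j|$, and over the $N=T/\Delta t$ time steps, contributes only polynomial factors in $h^{-1}$. Difference quotients and the $\bm K^h$ terms in $\mathcal{E}_{h,\Omega}^n$ are handled the same way and are likewise only polynomially large. Hence
\[
\Delta t\sum_{n}\mathcal{E}_{h,\Omega}^n\le C\,h^{-p}\exp\bigl(-c\,h^{-\gamma/s}\bigr).
\]

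\textbf{Step 4: choice of parameters.} Given $\varepsilon$, take $\gamma$ close to $1$ and $s$ close to $1$ so that $\gamma/s>1-\varepsilon$, and absorb $h^{-p}$ into the exponential. This yields $C_T\ge C_{1,3}\exp\bigl(C_4h^{-(1-\varepsilon)}\bigr)$.
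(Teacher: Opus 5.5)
Your proposal is correct and follows essentially the same route as the paper: the Gevrey-localized physical-mode packet near $\pi/h$ with energy of unit order (Lemma~\ref{energy_lower_bound}), and iterated integration by parts with $\mathcal{L}^*$, in which derivatives of $1/\Phi'$ and of $r\bm{v}_{ph}$ are controlled by Cauchy estimates in $\eta=\xi h$ (each costing a factor $h\ll\rho^{-1}$) and derivatives of $\chi_\rho$ by Gevrey bounds; as in the paper, the choice $p\sim\rho^{1/s}$ then gives $\exp(-c\,h^{-\gamma/s})$, summing over $j$ and $n$ costs only polynomial factors, and one lets $\gamma/s\to1$. The one visible difference is in your favor: you keep the shift $x_c$ in the phase, $\Phi'=(x_j-x_c)-t_n v_g^{ph}$, which the paper's non-stationary phase step omits even though the initial data carry the factor $e^{-\sqrt{-1}\xi x_c}$.
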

\begin{remark}
We note that the constant $C_4(k,\lambda,s)$ depends on the polynomial degree $k$, the CFL number $\lambda$, and the Gevrey index $s$. The numerical experiments presented in Subsection~\ref{3.2} also verify this theoretical dependence.
\end{remark}
\begin{remark}\label{rem:critical_case}
Assumption \eqref{assump:observability} ensures strict stability of the physical eigenvalue, confines the wave packet support to the zone $[-\pi/h, \pi/h]$, and excludes the exceptional case $\max_{\xi}\sigma_{ph}(\xi) = \frac{4}{(\Delta t)^2}$ ($k=0, \Delta t=h$), where exact error cancellation guarantees uniform observability with the continuous observability constant $1/(T-2)$.
\end{remark}

\subsection{Filtering Recovery}
By Theorem~\ref{thm:fully_group_velocity_limits}, the physical group velocity satisfies $v_g^{ph}(0)=1$. The real analyticity of $\sigma_{ph}(\xi)$ thus guarantees that it is positive in a neighborhood of the origin, as formalized in the following Lemma, where the proof is given in Subsection~\ref{proof of 2.4}.
\begin{lemma}\label{lem:local_positivity}
    For the $P^k$-LDG scheme \eqref{eq:global_fully_discrete}, there exists a constant $\eta_k\in (0,\pi)$, independent of $h$, such that the physical group velocity satisfies
    \[
        v_g^{ph}(\xi) > 0,\qquad \text{for all} \,\,|\xi|\le\frac{\eta_k}{h}.
    \]
\end{lemma}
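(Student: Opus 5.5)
The plan is to reduce everything to a scaling argument in the reference variable $\theta = \xi h \in [-\pi,\pi]$.

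\textbf{Step 1: scaling of the symbol.} Since the monomial basis is scaled to the reference cell, $\bm{M}^h = h\,\bm{M}$ and $\bm{K}^h_{0,\pm1} = h^{-1}\bm{K}_{0,\pm1}$, where $\bm{M}$ and $\bm{K}_{0,\pm1}$ are fixed matrices depending only on $k$. Hence $\mathcal{K}^h(\xi) = h^{-1}\mathcal{K}(\theta)$ with $\mathcal{K}(\theta) = \bm{K}_0 + \bm{K}_{-1}e^{-\sqrt{-1}\theta} + \bm{K}_{+1}e^{\sqrt{-1}\theta}$.

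It follows that $\sigma_{ph}(\xi) = h^{-2}\mu(\theta)$, where $\mu(\theta)$ is the physical eigenvalue of the $h$-independent pencil $(\mathcal{K}(\theta),\bm{M})$. By Theorem~\ref{thm:spectral_structure}, $\mu$ is real-analytic and even, with $\mu(0)=0$ and $\mu''(0)=2$.

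\textbf{Step 2: the dispersion relation in reference variables.} Writing $\lambda=\Delta t/h$, Definition~\ref{definition of fully physical mode} gives
\[
\omega_{ph}(\xi) = \frac{2}{\lambda h}\arcsin\!\Bigl(\frac{\lambda\sqrt{\mu(\theta)}}{2}\Bigr) =: \frac{1}{h}\,\Phi(\theta).
\]
Therefore
\[
v_g^{ph}(\xi) = \Phi'(\theta), \qquad \Phi'(\theta) = \frac{\mu'(\theta)}{\sqrt{\mu(\theta)}\,\sqrt{4-\lambda^2\mu(\theta)}} \quad \text{for } \theta>0,
\]
and this expression does not depend on $h$. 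By evenness it suffices to treat $\theta>0$. I will assume, as the paper tacitly does, that $\lambda$ is fixed and satisfies (A1). Then $\sqrt{4-\lambda^2\mu}>0$, so the sign of $\Phi'$ equals the sign of $\mu'(\theta)$.

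\textbf{Step 3: positivity near the origin.} The expansion $\mu(\theta)=\theta^2+O(\theta^4)$ gives $\mu'(\theta)=2\theta+O(\theta^3)>0$ for small $\theta>0$. Theorem~\ref{thm:fully_group_velocity_limits} also gives $\Phi'(\theta)\to1$ as $\theta\to0$. By the continuity and analyticity of $\Phi'$ on $(0,\pi)$, there is a first point $\theta_*\in(0,\pi]$ where $\Phi'$ vanishes, with $\theta_*=\pi$ if no interior zero exists. The case $\theta_*=\pi$ can occur: in the exceptional case $(k,\Delta t)=(0,h)$ the velocity tends to $1$ at the endpoint.

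I then choose any $\eta_k\in(0,\theta_*)$, for instance $\eta_k=\theta_*/2$. This gives $v_g^{ph}(\xi)=\Phi'(\xi h)>0$ for $0<|\xi|\le \eta_k/h$. At $\xi=0$ the limit value $1$ is used.

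\textbf{Main obstacle.} The key issue is showing that $\theta_*$, and hence $\eta_k$, is independent of $h$. This is exactly what the scaling in Step 1 provides, because $\mu$ depends only on $k$ and $\Phi$ depends only on $(k,\lambda)$. A secondary point is to make sure the analytic branch $\mu$ does not cross other branches in a way that destroys analyticity. This is covered by the real-analyticity guaranteed by Kato's theory in Theorem~\ref{thm:spectral_structure}. Finally, $\sqrt{\mu}$ is analytic away from $0$ since $\mu>0$ there, which holds because the physical branch is isolated at the origin and the pencil is positive semi-definite.
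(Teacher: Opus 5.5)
Your proposal is correct and follows the same route as the paper. You rescale to $\theta=\xi h$ so that $v_g^{ph}(\xi)=\Phi'(\theta)$ with $\Phi$ independent of $h$ (this is the paper's $\widetilde{\omega}$ from Step 4 in the proof of Lemma~\ref{nonstationary_bound}), and then use $\Phi'(0)=1$ and continuity to obtain an $h$-independent radius $\eta_k$; you simply make the scalings $\bm{M}^h=h\bm{M}$, $\bm{K}^h_{0,\pm1}=h^{-1}\bm{K}_{0,\pm1}$ and the sign analysis explicit. One minor point: your closing claim that $\mu>0$ on $(0,\pi]$ does not follow from ``isolated at the origin'' plus semi-definiteness alone, but the lemma does not need it, since $\mu(\theta)=\theta^2+O(\theta^4)$ already makes $\Phi'$ analytic and positive on a neighborhood of $0$.
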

Motivated by Lemma~\ref{lem:local_positivity}, we set $\delta_k = 1 - \frac{\eta_k}{\pi}$. The band $I_{\delta_k} = [\frac{-(1-\delta_k)\pi}{h}, \frac{(1-\delta_k)\pi}{h}]$ then lies within the region where $v_g^{ph}(\xi) >0$. Consequently, there exists a constant $v_g(\delta_k) > 0$ such that $v_g^{ph}(\xi) \ge v_g(\delta_k)$ for all $\xi \in I_{\delta_k}$.

\textbf{Our final main result} establishes a constructive recovery principle. The following Theorem achieves this by projecting the initial data onto the physical branch and restricting it to frequencies with positive group velocities.
\begin{theorem}\label{theorem restore}
For $\delta_k = 1 - \frac{\eta_k}{\pi} \in (0,1)$, where $\eta_k$ is given in Lemma~\ref{lem:local_positivity}, there exists a positive constant $T^*(\delta_k)$ such that for any $T > T^*(\delta_k)$, the scheme \eqref{eq:global_fully_discrete} with initial data restricted to the filtered discrete space $\mathcal{V}_h^{\delta_k}$ is uniformly observable
    \[\mathcal{E}_h^{0} \le \frac{2}{T-T^*(\delta_k)} \Delta t \sum_{n=0}^{N-1} \mathcal{E}_{h,\Omega}^{n},\]
    where $N\in\mathbb{N}_+$ and $\Delta t = T/N$. Here, the filtered discrete space $\mathcal{V}_h^{\delta_k}$ is defined as 
\begin{equation*}
    \mathcal{V}_h^{\delta_k} = \left\{ \bm{U} \in \ell^2(\mathbb{Z}; \mathbb{C}^{k+1}) \ \Big| \ \,\operatorname{supp}(\widehat{\bm{U}}(\xi)) \subset I_{\delta_k}, \ \widehat{\bm{U}}(\xi) \in \text{span}\{\bm{v}_{ph}(\xi)\} \right\}.
\end{equation*}
Moreover, the critical observation time is $T^*(\delta_k)=C/v_g(\delta_k)$, where the positive constant $C$ is independent of $h$.
\end{theorem}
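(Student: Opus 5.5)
The plan is a discrete multiplier (Ingham-type) argument carried out in the Fourier variable. Filtered data live on the physical branch and on the band where $v_g^{ph}\ge v_g(\delta_k)$. For such data, each frequency component moves at a speed bounded below. The energy inside $[-1,1]$ should therefore leave the unobserved region within a time of order $1/v_g(\delta_k)$.

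\textbf{Step 1: explicit representation.} For $\bm{U}^0\in\mathcal{V}_h^{\delta_k}$ we write $\widehat{\bm{U}}^0(\xi)=a(\xi)\bm{v}_{ph}(\xi)$, with $\operatorname{supp}a\subset I_{\delta_k}$. Using the eigen-decomposition of Theorem~\ref{thm:spectral_structure} in \eqref{fully Fourier mode}, the solution is
\[
\widehat{\bm{U}}^n(\xi)=\bigl(a_+(\xi)e^{\sqrt{-1}\omega_{ph}(\xi)t_n}+a_-(\xi)e^{-\sqrt{-1}\omega_{ph}(\xi)t_n}\bigr)\bm{v}_{ph}(\xi).
\]
The coefficients $a_\pm$ are determined by the initial data, and $\omega_{ph}$ is real on $I_{\delta_k}$ by stability. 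By Parseval and the $\bm M^h$-orthonormality, $\mathcal{E}_h^n$ is a weighted $L^2$ norm of $a_\pm$, with weight $\sigma_{ph}$ coming from the potential and correction terms. This weight is comparable to $\frac{4}{\Delta t^2}\sin^2(\omega_{ph}\Delta t/2)$.

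\textbf{Step 2: multiplier identity.} We introduce a discrete analogue of the multiplier $x\,u_x$. Concretely, we take $\langle \bm{U}^n, \mathcal{X}\,\mathcal{D}\bm{U}^n\rangle$-type quantities, where $\mathcal{X}$ is multiplication by a smooth cutoff $\phi(x_j)$ equal to $x_j$ on $[-1,1]$ and $\mathcal{D}$ corresponds in Fourier to $\partial_\xi$. Rather than handling a full multiplier identity, a simpler route is to work with the position operator in Fourier space directly. The functional
\[
X(t_n)=\frac{1}{2\pi}\int \overline{\widehat{\bm U}^n}\cdot \bm M^h\bigl(\sqrt{-1}\,\partial_\xi\bigr)\widehat{\bm U}^n\,d\xi
\]
(``centre of energy'') satisfies a discrete Heisenberg/Ehrenfest law: its increment per step equals $\Delta t$ times the energy-weighted average of $v_g^{ph}(\xi)$, plus oscillating cross terms between $a_+$ and $a_-$. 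Splitting into the $a_+$ and $a_-$ parts handles this. For each unidirectional part $\omega_{ph}(\xi)t$ with $\partial_\xi\omega_{ph}\ge v_g(\delta_k)$ on the support, an energy-density argument applies. Localized in space via the cutoff $\phi$, it gives
\[
\Delta t\sum_{n}\mathcal{E}_{h,[-1,1]}^n\le \frac{C}{v_g(\delta_k)}\,\mathcal{E}_h^0+\text{lower order}.
\]
The constant $C$ depends only on the diameter $2$, on $\|\phi'\|$, and on uniform bounds of $\bm v_{ph}'$ and $r$ on $I_{\delta_k}$. Those bounds must be checked to be uniform in $h$ after rescaling $\eta=\xi h$, which they are since the symbol depends only on $\xi h$ and $\lambda$.

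\textbf{Step 3: conclusion.} Energy conservation (Lemma~\ref{invariant}) gives $T\mathcal{E}_h^0=\Delta t\sum_n\mathcal{E}_h^n=\Delta t\sum_n(\mathcal{E}^n_{h,\Omega}+\mathcal{E}^n_{h,[-1,1]})$. Inserting the Step 2 bound with $T^*=C/v_g(\delta_k)$ yields $(T-T^*)\mathcal{E}_h^0\le \Delta t\sum_n\mathcal{E}_{h,\Omega}^n$, up to a factor $2$. That factor absorbs the cross terms between $a_\pm$ and the mismatch between the quadratic energy \eqref{eq:discrete_energy} and its Fourier diagonal form.

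\textbf{Main obstacle.} The hard part is Step 2: producing a uniform-in-$h$ local energy decay estimate for the discrete Fourier integral operator $e^{\sqrt{-1}\omega_{ph}(\xi)t}$ with $\xi$-dependent eigenvectors. First I would try a commutator computation with the symbol-level position operator $\sqrt{-1}\partial_\xi$. This yields exactly $v_g^{ph}$ plus terms involving $\bm v_{ph}'$, which are bounded on $I_{\delta_k}$ by real analyticity (Theorem~\ref{thm:spectral_structure}) and $h$-scaling.
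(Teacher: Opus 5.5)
Your Steps 1 and 3 match the paper's framework: the half-wave splitting on the physical branch, and energy conservation to write $T\mathcal{E}_h^0$ as observed plus unobserved parts. Step 2, however, carries the whole content of the theorem, and as proposed it does not go through.

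The Ehrenfest law itself is exact for a one-directional packet. If $\widehat{\bm U}^n=e^{-\sqrt{-1}\omega_{ph}(\xi)t_n}a(\xi)\bm v_{ph}(\xi)$, the normalization $\bm v_{ph}^H\bm M^h\bm v_{ph}=1$ gives $X(t_n)=X(0)+\frac{t_n}{2\pi}\int v_g^{ph}(\xi)|a(\xi)|^2\,\mathrm{d}\xi$. But this says nothing local. $X$ is not bounded by the energy, and an identity for the \emph{mean} of $v_g^{ph}$ cannot rule out part of the energy staying in $[-1,1]$ while the rest moves faster.

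To get $\Delta t\sum_n\mathcal{E}^n_{h,I}\le C\mathcal{E}_h^0/v_g(\delta_k)$ from a multiplier, you need a bounded nondecreasing weight $\phi$. You then need the lower bound $\operatorname{Re}\langle w,\phi'(x)\,v_g^{ph}(D)w\rangle\ge v_g(\delta_k)\langle w,\phi'(x)w\rangle-(\text{error})$. This does not follow from $v_g^{ph}\ge v_g(\delta_k)$ on $I_{\delta_k}$: $\phi'(x)$ and $v_g^{ph}(D)$ do not commute, and multiplying by $\phi'$ spreads the frequency support outside $I_{\delta_k}$. Closing this would require, for instance:
\begin{itemize}
\item writing $\phi'=\chi^2$;
\item an extension $b$ of $v_g^{ph}|_{I_{\delta_k}}$ with $b\ge v_g(\delta_k)$ on the whole period;
\item the identity $\operatorname{Re}(\chi^2 b(D))=\chi\, b(D)\,\chi+\frac12[\chi,[\chi,b(D)]]$, with an $O(h^2)$ double commutator;
\item a first-order expansion in $\Delta t$ of the one-step conjugation of $\phi$ by $e^{\sqrt{-1}\omega_{ph}\Delta t}$;
\item commutator bounds for $\bm v_{ph}(\xi)$ and for the energy weight.
\end{itemize}
All resulting errors must be shown to be $O(h)\,T\,\mathcal{E}_h^0$ (absorbable for $h\le h_0$) or $T$-independent multiples of $\mathcal{E}_h^0$. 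None of this is supplied. Your remark that the factor $2$ ``absorbs'' the $a_\pm$ cross terms and the Fourier mismatch is not an argument either, since unquantified errors cannot be absorbed by a constant in front of the observed energy.

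The paper never uses a multiplier. It estimates the time-summed localized energy directly on the Fourier side, which is the ingredient your plan is missing. It dominates $\mathcal{E}^n_{h,I}$ by a form weighted with $\Phi\ge1$ on $[-1,1]$. It then writes $\Delta t\sum_n$ of that form as a bilinear form in $\widehat{\bm U}^0_\pm$ with kernel $\widehat{\Phi}(\xi-\eta)K(\xi,\eta)$, where $K(\xi,\eta)=\Delta t\sum_{n=0}^{N-1}e^{-\sqrt{-1}t_n(\omega_{ph}(\xi)-\omega_{ph}(\eta))}$.

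The group-velocity bound enters only through the mean value theorem, $|\omega_{ph}(\xi)-\omega_{ph}(\eta)|\ge v_g(\delta_k)|\xi-\eta|$, which gives $|K|\le\min(T,C_1/(v_g(\delta_k)|\xi-\eta|))$. Away from the diagonal, the decay of $\widehat{\Phi}$ (by summation by parts, using the $2\delta_k\pi/h$ gap to the aliasing point) gives a $T$-independent bound. Near the diagonal the bound is logarithmic in $T$. The counter-propagating terms use $|\omega_{ph}(\xi)+\omega_{ph}(\eta)|\ge v_g(\delta_k)|\xi+\eta|$. The $\log T$ is finally absorbed into $T/2$, which is where the $2$ in $2/(T-T^*)$ comes from.

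If you want to keep your half-wave splitting, the cleanest replacement for $X(t_n)$ is of the same Fourier type. Under (A1), $\xi\mapsto\omega_{ph}(\xi)\Delta t$ is strictly increasing from $I_{\delta_k}$ into $(-\pi,\pi)$. The substitution $\tau=\omega_{ph}(\xi)$ and Parseval in $n$ then give, for every fixed $j$, $\Delta t\sum_{n}|\frac{1}{2\pi}\int e^{\sqrt{-1}(\xi x_j-\omega_{ph}(\xi)t_n)}\widehat{f}(\xi)\,\mathrm{d}\xi|^2\le\frac{1}{2\pi v_g(\delta_k)}\int|\widehat{f}|^2\,\mathrm{d}\xi$ for $\widehat{f}$ supported in $I_{\delta_k}$. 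This is a local-smoothing bound with no $\log T$ loss.
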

\begin{remark}
    The constant $C$ is explicitly given in Subsection~\ref{proof of 2.4}.
\end{remark}
\begin{remark}\label{rem:higher_order_advantage}
While the theoretical bound $\eta_k$ ensures that the physical group velocity remains positive within $I_{\delta_k}$, Figure~\ref{fig:fullydispersion_comparison} indicates that this positive velocity region widens as $k$ increases. Although an analytic proof of this monotonicity is not pursued here, this observation suggests that higher-order methods may retain a broader physical frequency band, thereby reducing filtering costs, shortening the critical observation time $T^*(\delta_k)$, and better preserving the original wave dynamics.

\end{remark}

\section{Numerical Experiments}
\label{sec:numerical_experiments}
In this section, we present a series of numerical experiments to validate the theoretical results. 
For all computations in Subsections~\ref{3.2} to~\ref{3.4}, the whole-line Cauchy problem is truncated to a periodic computational domain $\Omega_c = [-6, 6]$. By construction, the pathological initial data is supported within the unobserved region $I = [-1, 1]$. Due to the finite speed of propagation, the wave support for any observation time $T \le 5$ remains confined to $[-1-T, 1+T] \subset \Omega_c$. This prevents periodic wrap-around, ensuring that the truncated numerical dynamics identically coincide with the whole-line continuous problem.

The observability constant $C_T(h)$ is evaluated by minimizing the ratio of the time-accumulated observed energy $\Delta t \sum_{n=0}^{N-1} \mathcal{E}_{h,\Omega}^{n}$ to the initial total energy $ \mathcal{E}_{h}^{0}$ over all non-zero initial states. This Rayleigh-quotient minimization for the worst-case initial data reduces to computing the inverse of the minimal eigenvalue of the fully discrete observability Gramian. Correspondingly, the filtered observability constant $C_T(h)$ is numerically evaluated by projecting this generalized eigenvalue problem onto the filtered modal-frequency subspace and computing the inverse of the minimal non-zero eigenvalue of the projected Gramian.
\subsection{Numerical Dispersion Relations}
Figure~\ref{fig:fullydispersion_comparison} plots the dispersion relations of the fully discrete $P^k$-LDG schemes for various polynomial degrees ($k=0, 1, 2$) with $h=1$. These visualizations illustrate how the numerical dispersion branches deviate from the exact linear continuous relation. 
\begin{figure}[h!]
	\centering
	\begin{subfigure}{0.3333\linewidth}
		\centering
		\includegraphics[width=\linewidth]{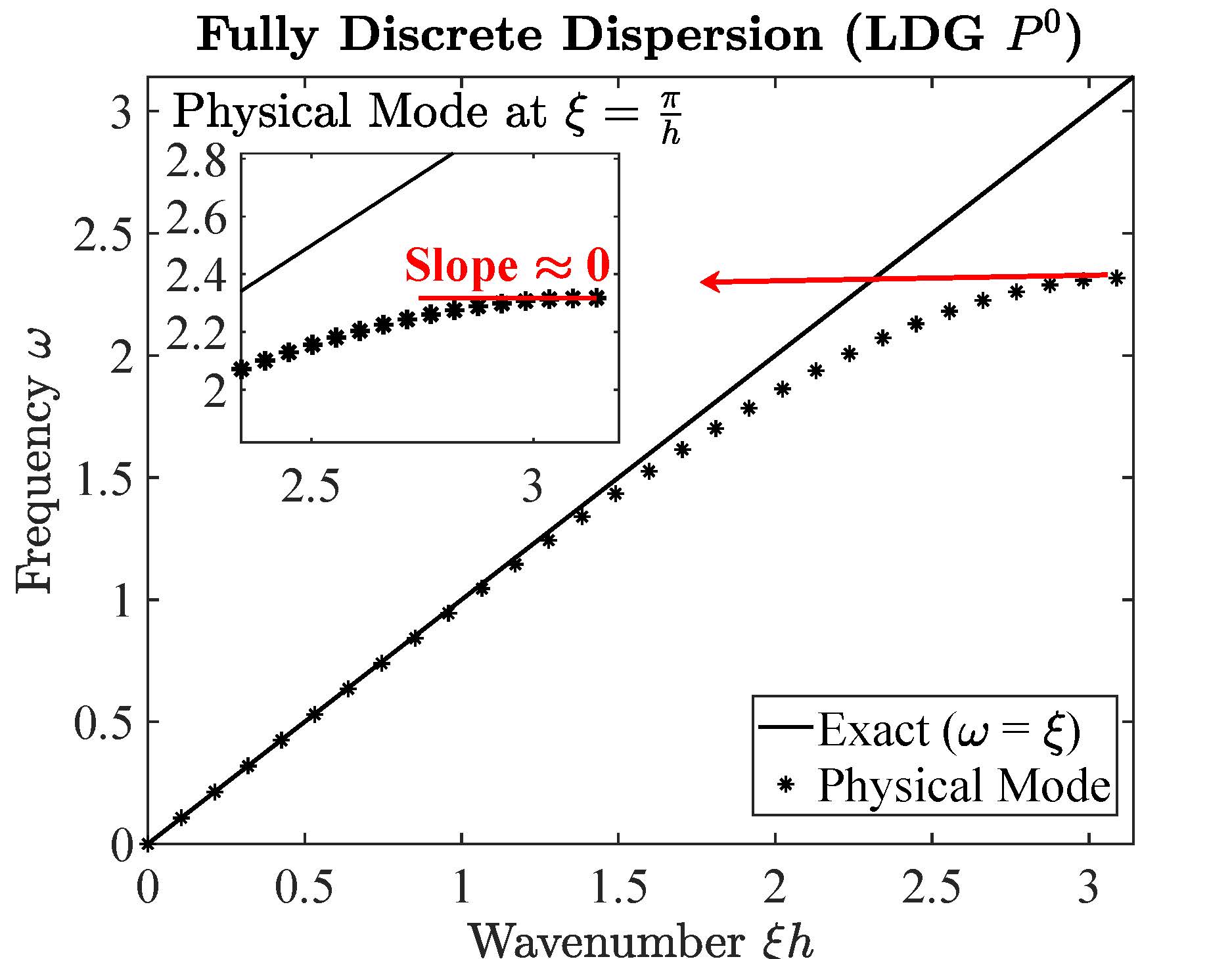}
	\end{subfigure}%
	\begin{subfigure}{0.3333\linewidth}
		\centering
		\includegraphics[width=\linewidth]{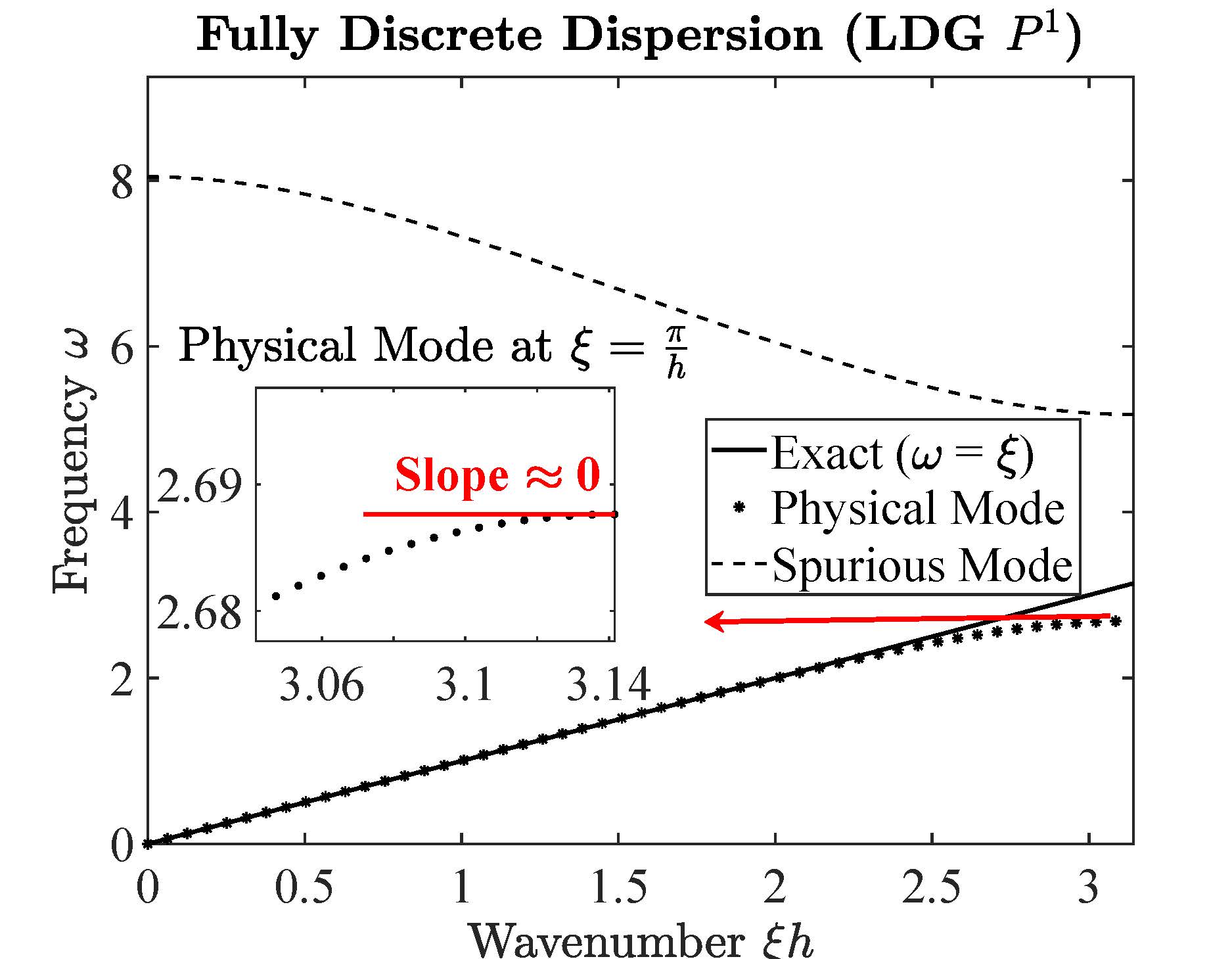}
	\end{subfigure}%
	\begin{subfigure}{0.3333\linewidth}
		\centering
		\includegraphics[width=\linewidth]{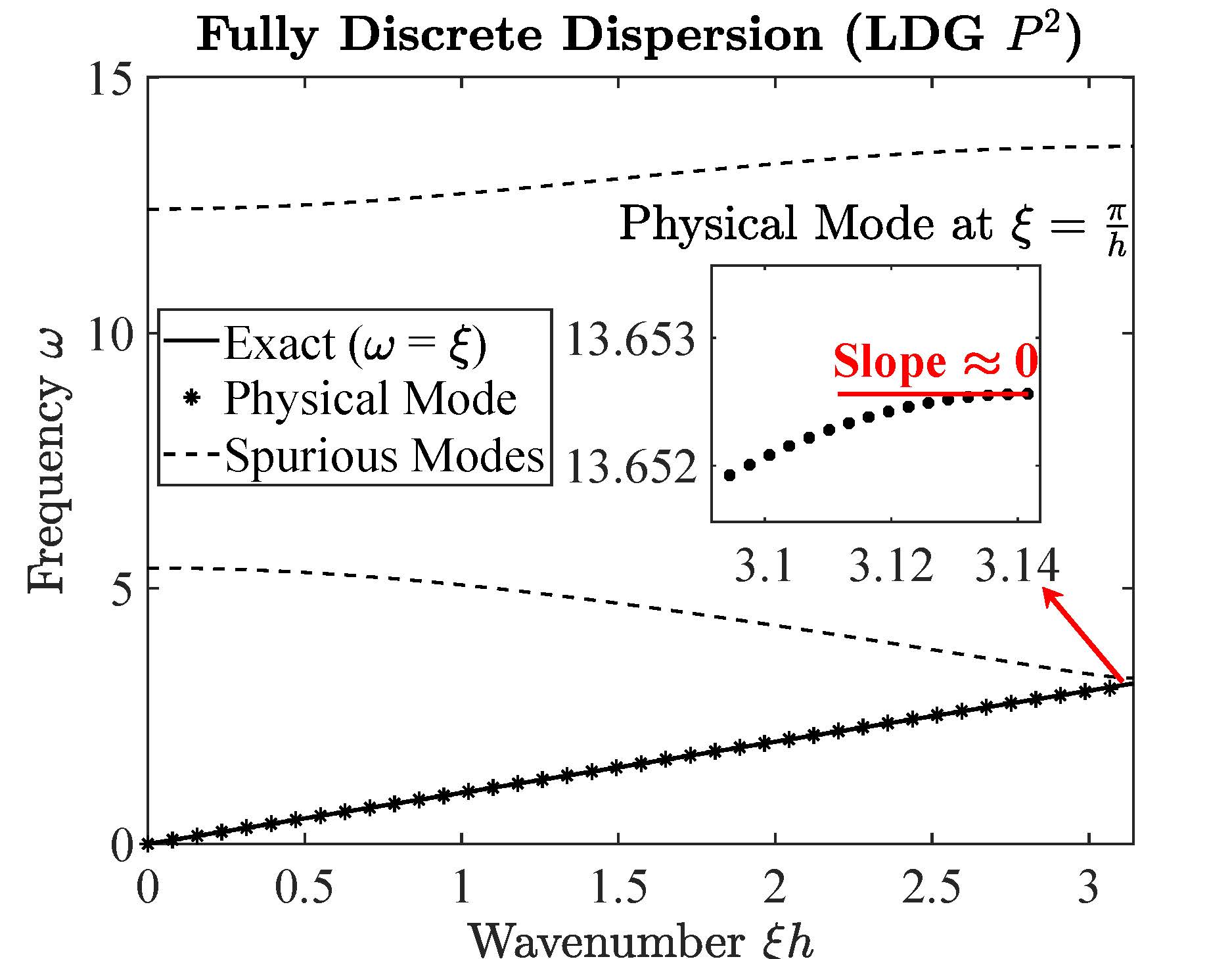}
	\end{subfigure}
	\caption{Dispersion relations for $k=0,1,2$, with $\lambda=0.8, 0.3, 0.12$, respectively.}
	\label{fig:fullydispersion_comparison}
	\vspace{-0.5cm}
\end{figure}

Figure~\ref{fig:all_dispersion_relations} compares the dispersion relations slightly below and exactly at the strict stability boundaries, corroborating  Theorem~\ref{thm:fully_group_velocity_limits}.

\begin{figure}[htbp]
	\centering
	\begin{subfigure}[b]{0.44\linewidth}
		\centering
		\includegraphics[width=0.99\linewidth]{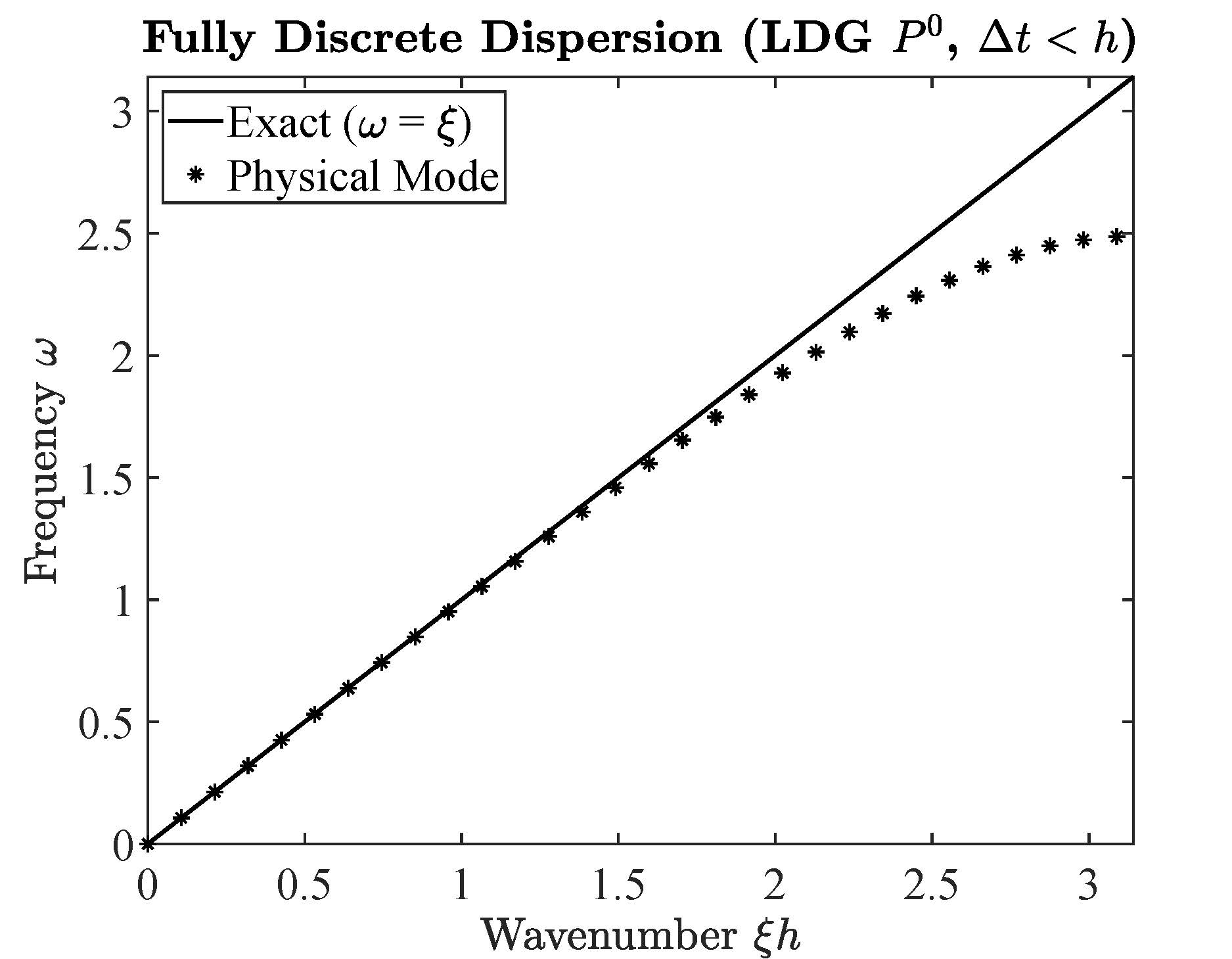}
	\end{subfigure}
	\hspace{1cm}
	\begin{subfigure}[b]{0.44\linewidth}
		\centering
		\includegraphics[width=0.99\linewidth]{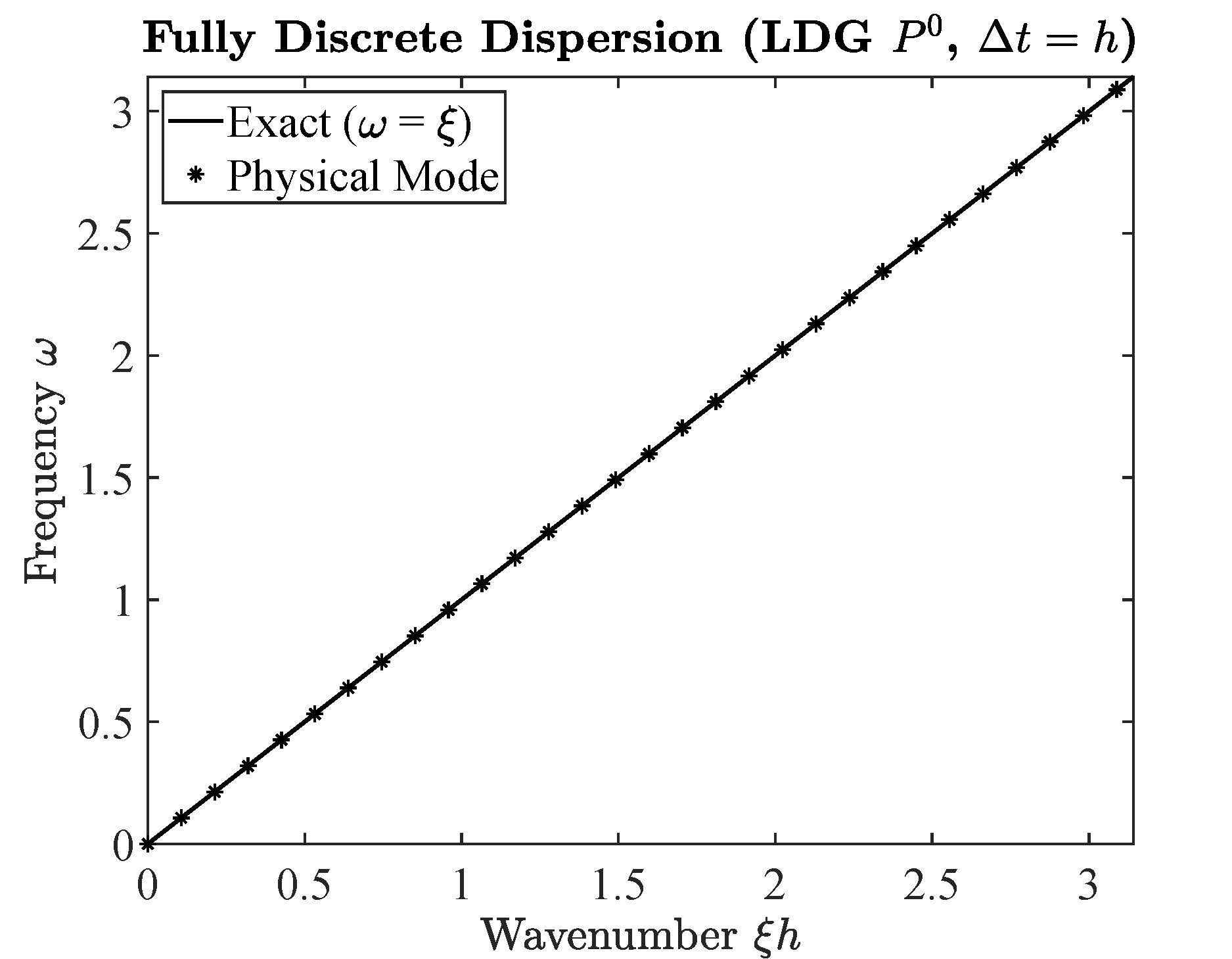}
	\end{subfigure}
	
	\vspace{0.1cm} 
	\begin{subfigure}[b]{0.44\linewidth}
		\centering
		\includegraphics[width=0.99\linewidth]{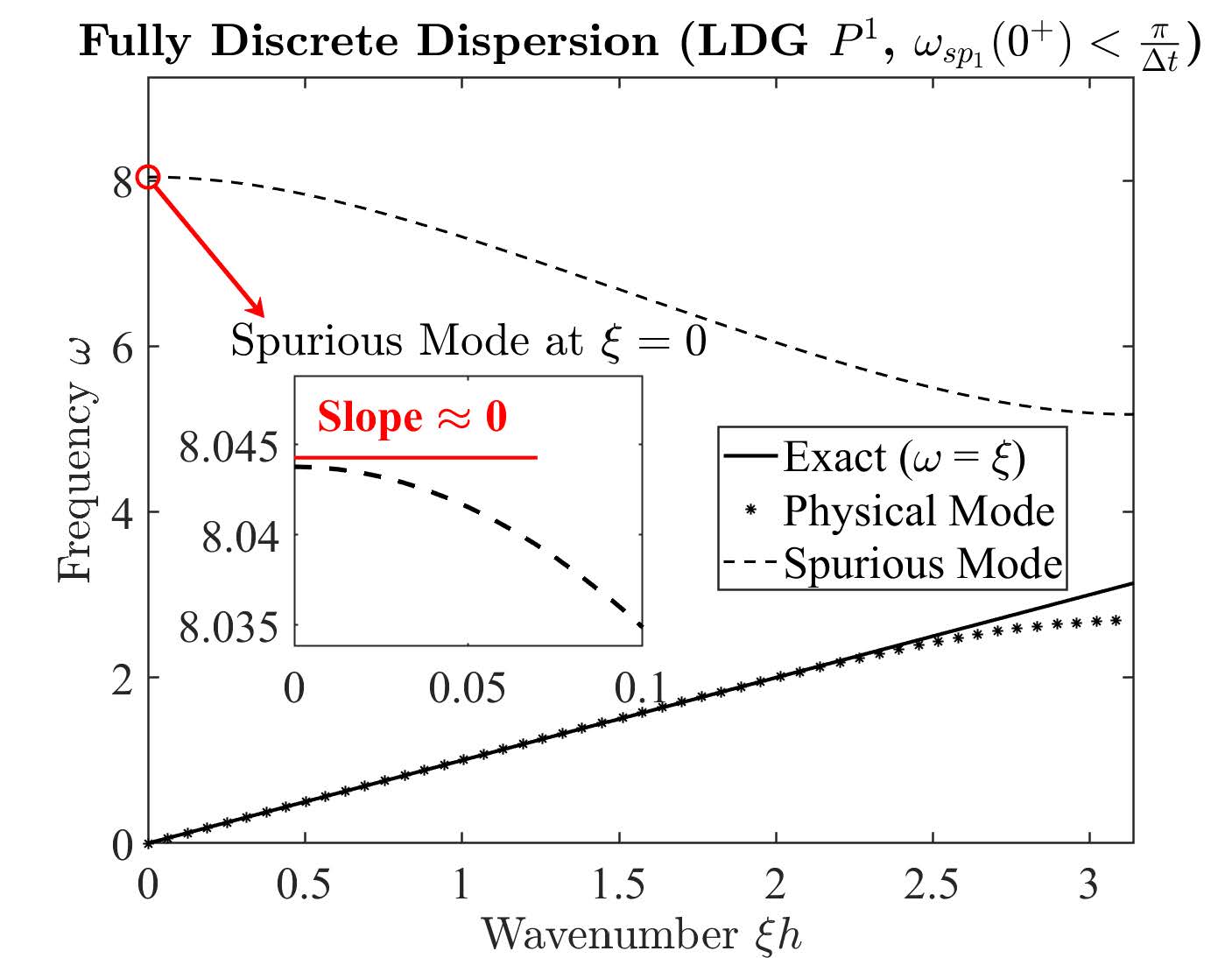}
	\end{subfigure}
	\hspace{1cm}
	\begin{subfigure}[b]{0.44\linewidth}
		\centering
		\includegraphics[width=0.99\linewidth]{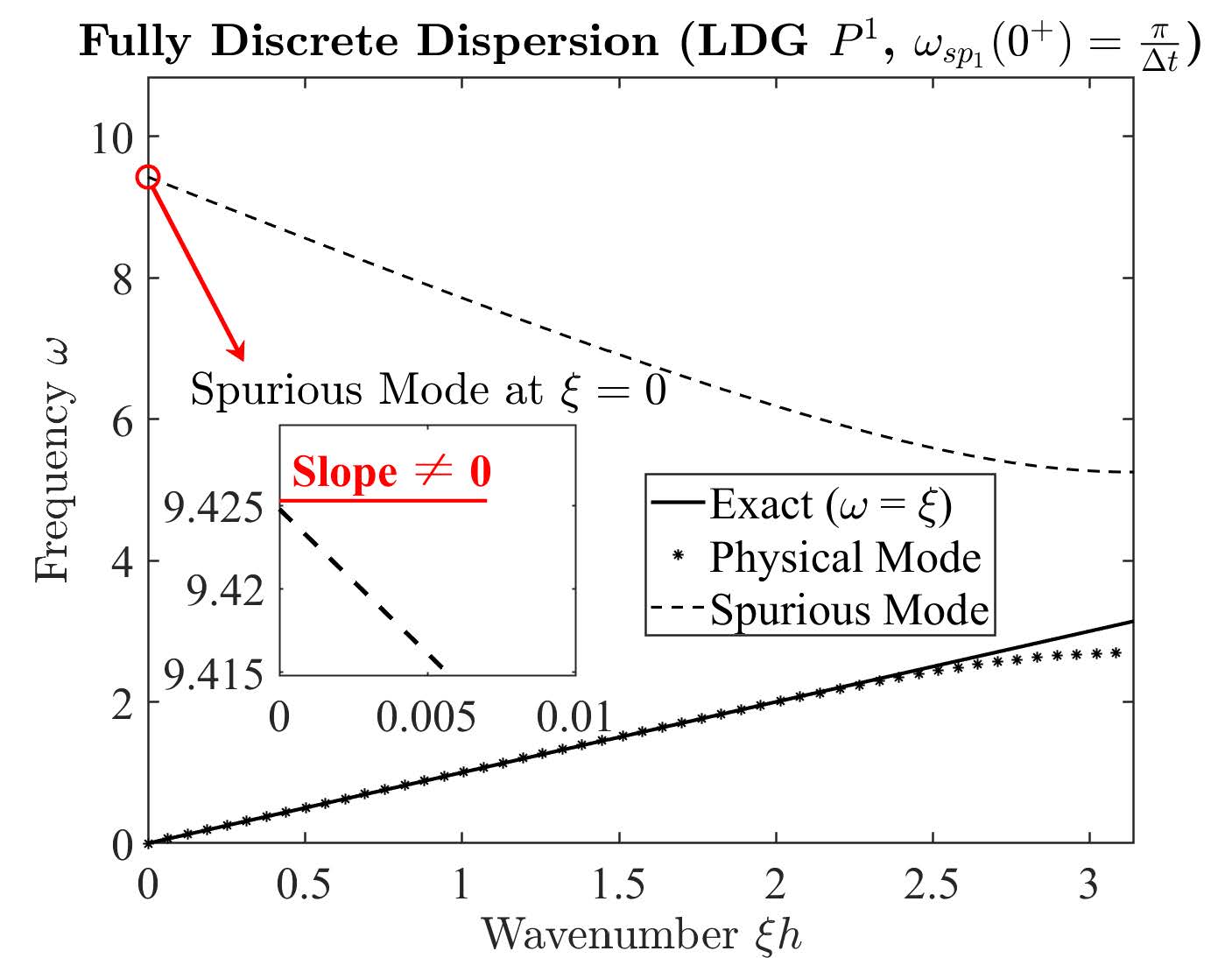}
	\end{subfigure}
	
	\caption{Dispersion relations for $k=0,1$ under sub-critical and critical conditions.}
	\label{fig:all_dispersion_relations}
    \vspace{-0.5cm}
\end{figure}

\subsection{Exponential Asymptotics for the Observability Constant}\label{3.2}
For $T=2.5$ and a sequence of mesh sizes $h \in \{1, 0.5, 0.25, 0.125, 0.0625\}$, Table~\ref{tab:exp_rates} reports the exponential growth rates $r$, extracted by matching reference slopes on log-linear plots, for the $P^0$-, $P^1$-, and $P^2$-LDG schemes, for $\lambda \in \{0.3, 0.9\}$, $\{0.1, 0.3\}$, and $\{0.05, 0.15\}$, respectively. These rates characterize the $\exp(r/h)$ divergence as $h \to 0$, confirming that the observability constant diverges exponentially rather than polynomially, thereby validating Theorem~\ref{non_uniform_observability}. Furthermore, the results reveal that for a fixed $k$, the exponential growth rate $r$ monotonically increases as $\lambda$ decreases. This dependence of $r$ on both $k$ and $\lambda$ highlights the nonlinear coupling between spatial dispersion and temporal phase errors.
\begin{table}[htbp]
    \centering
    \caption{Fitted exponential growth rates $r$ associated with $\exp(r/h)$ at $T=2.5$.}
    \label{tab:exp_rates}
    \renewcommand{\arraystretch}{1.15}
    \resizebox{0.8\textwidth}{!}{
    \begin{tabular}{ccc}
        \hline
        Polynomial Degree $k$ & CFL Ratio $\lambda$ & Exponential Growth Rates $r$ \\
        \hline
        \multirow{2}{*}{$k=0$}   & 0.3 & 1.40 \\ 
                               & 0.9 & 0.67 \\ 
        \hline
        \multirow{2}{*}{$k=1$}   & 0.1 & 1.50 \\
                               & 0.3 & 0.60 \\ 
        \hline
        \multirow{2}{*}{$k=2$}   & 0.05 & 2.20 \\ 
                               & 0.15 & 0.95 \\ 
        \hline
    \end{tabular}}
\end{table}

\subsection{\texorpdfstring{Recovery of Uniform Observability for the $P^k$ Scheme}{Recovery of Uniform Observability for the Pk Scheme}}\label{3.4}
To restore uniform observability for arbitrary $k$, we implement the modal-frequency filtering mechanism proposed in Theorem~\ref{theorem restore}. Specifically, we filter out the spurious modes and retain a fraction $\gamma = 1-\delta_k$ of the low-frequency physical modes. Figure \ref{fig:filtering_recovery} validates this strategy and reveals the advantages of higher-order schemes. In Figure \ref{fig:filtering_recovery}(a), we compare the unfiltered and filtered observability constants $C_T$ for the $P^1$-LDG scheme given a fixed observation time $T=2.5$ and truncation parameter $\delta_k=0.1$. While the unfiltered curve diverges exponentially, the filtered $C_T$ remains stable over the considered range of $1/h$, effectively demonstrating the utility of modal-frequency filtering. Furthermore, Figure \ref{fig:filtering_recovery}(b) compares the filtered observability constants of the $P^0$- and $P^1$-LDG schemes against the physical mode retention ratio $\gamma$ on a fixed mesh $h=0.1$ with $T=2.4$. Due to numerical dispersion, the control cost of the $P^0$ scheme grows rapidly once $\gamma > 0.5$. In contrast, the $P^1$-LDG scheme maintains accurate dispersion over a broader frequency range, suggesting that higher-order schemes retain more physical frequencies to facilitate uniform observability recovery.
\begin{figure}[h!]
	\centering
	\begin{subfigure}{0.44\linewidth}
		\centering
		\includegraphics[width=0.99\linewidth]{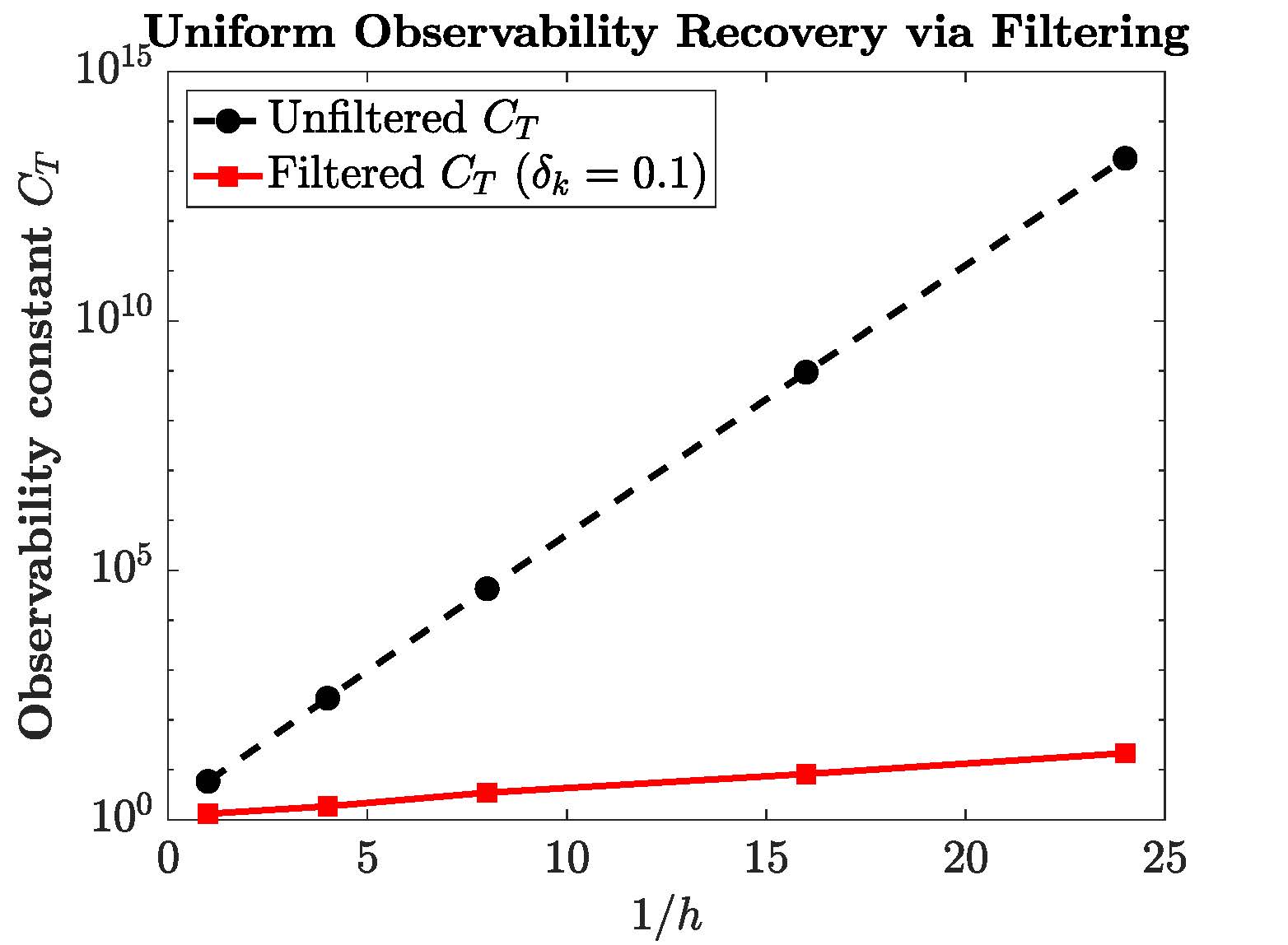}
        \caption{Unfiltered vs. filtered $C_T$ ($P^1$-LDG)}
        \label{fig:filtered_p1}
	\end{subfigure}
    \hspace{1cm}
         \begin{subfigure}{0.44\linewidth}
		\centering
		\includegraphics[width=0.99\linewidth]{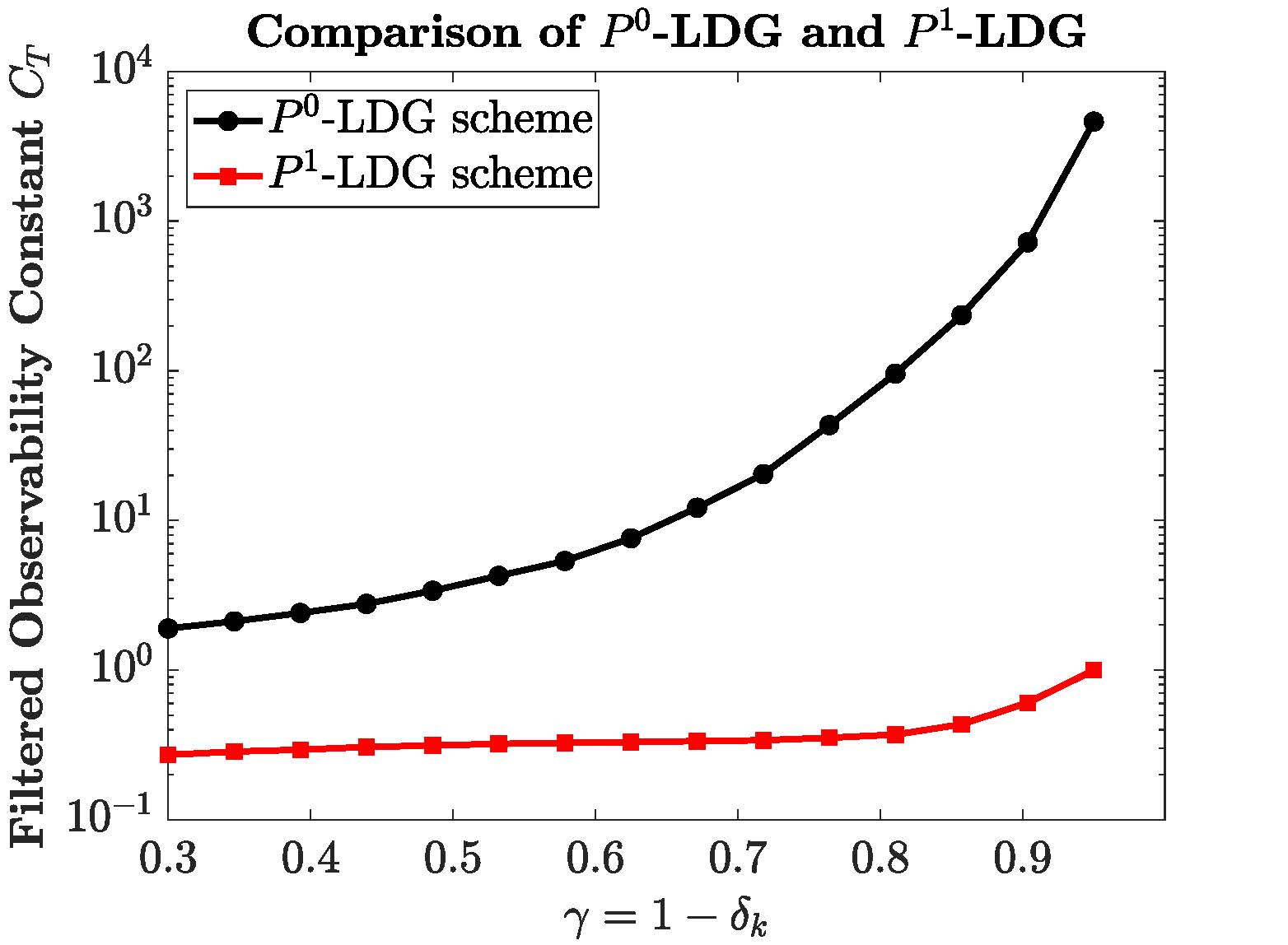}
        \caption{$P^0$ vs. $P^1$ filtered $C_T$ against $\gamma$}
        \label{fig:compare_p0_p1}
	\end{subfigure}
\caption{Recovery of uniform observability via modal-frequency filtering. (a) Comparison of unfiltered and filtered $C_T$ for the $P^1$-LDG scheme with $\lambda=0.2$ and $\delta_k = 0.1$. (b) Comparison of the filtered $C_T$ of the $P^0$- and $P^1$-LDG schemes with $\lambda=0.2$ for truncation ratio $\gamma = 1-\delta_k$.}
    \label{fig:filtering_recovery}
    \vspace{-0.5cm}
\end{figure}

\section{Proof of Main Results}\label{section3}
\subsection{Proof of Theorem~\ref{thm:spectral_structure}}\label{subsection4.1}
For the sake of clarity, the proof of Theorem~\ref{thm:spectral_structure} is divided into the following sequence of Lemmas.
\begin{lemma}\label{lemma3.1}
    The symbol matrix $\mathcal{K}^h(\xi)$ is Hermitian positive semi-definite, and $\mathcal{K}^h(\frac{\pi}{h})$ and $\mathcal{K}^h(-\frac{\pi}{h})$ are Hermitian positive definite matrices.
\end{lemma}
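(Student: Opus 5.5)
We plan to obtain the Hermitian property and the positive semi-definiteness of $\mathcal{K}^h(\xi)$ from a quadratic-form identity. The identity comes from eliminating $q_h$ in the LDG formulation.

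\textbf{Hermitian property.} Lemma~\ref{lem:matrix_symmetry} gives $(\bm{K}_{-1}^h)^T=\bm{K}_{+1}^h$ and that $\bm{K}_0^h$ is real symmetric. Hence
\[
\mathcal{K}^h(\xi)^{*}=\bm{K}_0^h+(\bm{K}_{-1}^h)^T e^{\sqrt{-1}\xi h}+(\bm{K}_{+1}^h)^T e^{-\sqrt{-1}\xi h}=\mathcal{K}^h(\xi),
\]
so $\mathcal{K}^h(\xi)$ is Hermitian.

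\textbf{Semi-definiteness via a Bloch-wave identity.} For $\bm{w}\in\mathbb{C}^{k+1}$, consider the Bloch-periodic function $u_h\in V_h^k$ with coefficients $\bm{u}_j=\bm{w}e^{\sqrt{-1}\xi x_j}$. Let $q_h$ be defined cellwise by \eqref{eq:weak_a} with flux $u_h^-$. The local equation \eqref{eq:local_fully_discrete} means that $(\mathcal{K}^h(\xi)\bm{w})\cdot$ (test coefficients) equals the right-hand side of \eqref{eq:weak_b} with flux $q_h^+$. We test \eqref{eq:weak_b} with $w=\overline{u_h}$ and test \eqref{eq:weak_a} with $v=\overline{q_h}$ on one cell $I_0$. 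Adding the two identities, the volume terms combine into $-\int_{I_0}(q_h\overline{u_h})_x$, which we integrate exactly. The interface terms then telescope: because of the common factor $e^{\sqrt{-1}\xi h}$, flux contributions at $x_{1/2}$ and $x_{-1/2}$ cancel up to unimodular phases. What remains is
\[
\overline{\bm{w}}^T\mathcal{K}^h(\xi)\bm{w}=\int_{I_0}|q_h|^2\,\mathrm{d}x\ \ge 0.
\]
This is the standard LDG energy identity with alternating fluxes, and it is the main bookkeeping step. We must check the signs carefully, since the paper's \eqref{eq:weak_b} carries the sign convention in which $u_{tt}$ is moved across, so that the stiffness operator is $+$ the weak $-q_x$. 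The scaling constant in front, a power of $h$, only needs to be positive.

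\textbf{Definiteness at $\xi=\pm\pi/h$.} If $\overline{\bm{w}}^T\mathcal{K}^h(\pm\pi/h)\bm{w}=0$, then $q_h\equiv0$ on $I_0$, and by the Bloch relation on every cell. Equation \eqref{eq:weak_a} with $q_h=0$ says that $u_h$ on $I_0$ satisfies $\int_{I_0}u_h v_x=u^-_{1/2}v^-_{1/2}-u^-_{-1/2}v^+_{-1/2}$ for all $v\in P^k$. Integrating by parts gives
\[
\int_{I_0}(u_h)_x v = \bigl(u_{-1/2}^+-u_{-1/2}^-\bigr)v^+_{-1/2}
\]
for all $v$. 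Taking $v$ vanishing at the left endpoint, with $v=(x-x_{-1/2})p$, forces $(u_h)_x=0$, so $u_h|_{I_0}$ is a constant $c$. Taking then $v=1$ gives a zero jump at $x_{-1/2}$, i.e.\ $u_{-1/2}^-=c$. At $\xi=\pi/h$ the Bloch relation gives $u^-_{-1/2}=e^{-\sqrt{-1}\pi}u^-_{1/2}=-c$. Hence $c=-c$, so $c=0$ and $\bm{w}=\bm{0}$. The same argument works for $\xi=-\pi/h$, since $e^{\pm\sqrt{-1}\pi}=-1$. (At $\xi=0$ the same argument yields the kernel spanned by constants, consistent with $\bm{v}_{ph}(0)\propto\bm{e}_0$.)

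The main obstacle is making the quadratic-form identity precise with the paper's monomial coefficients and local matrices. We would express $\mathcal{K}^h(\xi)=B(\xi)^*(\bm{M}^h)^{-1}B(\xi)$ (up to a positive factor), where $B(\xi)$ is the Bloch symbol of the discrete derivative defined by \eqref{eq:weak_a}, i.e.\ $\bm{q}_j$ expressed in terms of $\bm{u}_{j-1},\bm{u}_j$. Here we use that the flux $q^+$ is the adjoint of the flux $u^-$. This representation yields both the Hermitian property and the semi-definiteness at once.
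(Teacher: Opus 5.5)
Your proof is correct, and it takes a genuinely different route from the paper.

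\textbf{What the paper does.} It asserts positive semi-definiteness of the global operator $\mathbb{K}^h$. It transfers this to the symbol via Parseval, using a contradiction argument with a Fourier transform localized near a putative negative direction. For $\xi=\pi/h$ it truncates the alternating sequence $\bm v(-1)^j$ to $|j|\le N$. It then bounds the boundary contributions independently of $N$ and uses $(2N-1)E_0\le C(\bm v)$ to force $q_h^*\equiv 0$. Only after that does it run the same one-cell kernel computation you give.

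\textbf{What you do instead.} You work directly at the symbol level with a single Bloch wave, and your identity is exact. Write $\bm M^h\bm q_j=\bm A_0\bm u_j+\bm A_{-1}\bm u_{j-1}$ from \eqref{eq:weak_a}. The $q^+$-flux blocks in \eqref{eq:weak_b} are then precisely $-\bm A_0^T$ and $-\bm A_{-1}^T$. Hence $\mathcal{K}^h(\xi)=B(\xi)^*(\bm M^h)^{-1}B(\xi)$ with $B(\xi)=\bm A_0+\bm A_{-1}e^{-\sqrt{-1}\xi h}$. Consequently $\overline{\bm w}^T\mathcal{K}^h(\xi)\bm w=\int_{I_0}|q_h|^2\,\mathrm{d}x$, with no extra power of $h$.

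\textbf{What each route buys.} Yours gives the Hermitian property and semi-definiteness at once, reduces definiteness to $\ker B(\xi)$, and removes the need for the localization and truncation arguments. The paper only sketches the $N$-independent boundary bound in the truncation step. Your identity is also the $\xi$-dependent version of the one the paper itself invokes at $\xi=0$ in Lemma~\ref{fully proposition1}. It even yields more: your last step reads $c=e^{-\sqrt{-1}\xi h}c$, so $\mathcal{K}^h(\xi)$ is positive definite for every $\xi\in\Pi_h\setminus\{0\}$. The paper's route, in exchange, needs only the global energy identity $\|\bm U\|_{\mathbb{K}^h}^2=\sum_j\|q_h\|_{L^2(I_j)}^2$ and never has to identify the flux blocks.

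\textbf{Two harmless slips.}
\begin{itemize}
\item With $q_h=0$, integration by parts gives $\int_{I_0}(u_h)_x v=(u^-_{-1/2}-u^+_{-1/2})\,v^+_{-1/2}$, the opposite sign to yours. The conclusion is unaffected.
\item To make the volume terms combine into $-\int_{I_0}(q_h\overline{u_h})_x$, take the complex conjugate of \eqref{eq:weak_a} tested with $v=q_h$. Testing \eqref{eq:weak_a} with $\overline{q_h}$ produces $-\int u_h(\overline{q_h})_x$ instead. With this fix the interface terms cancel exactly, not just up to phases: both equal $e^{\sqrt{-1}\xi h}\,\overline{u^-_{1/2}}\,q^+_{-1/2}$.
\end{itemize}
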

\begin{proof}
By Lemma~\ref{lem:matrix_symmetry}, a direct calculation verifies that $\mathcal{K}^h(\xi)$ is Hermitian.
Since $\mathbb{K}^h$ is positive semi-definite, for any sequence $\bm{U} = \{\bm{u}_j\}_{j \in \mathbb{Z}} \in \ell^2(\mathbb{Z}; \mathbb{C}^{k+1})$, the global energy is non-negative
    \begin{equation}\label{eq:global_energy_nonneg}
        \|\bm{U}\|_{\mathbb{K}^h}^2 =  \sum_{j \in \mathbb{Z}} \bm{u}_j^H \left( \bm{K}_{-1}^h \bm{u}_{j-1} + \bm{K}_0^h \bm{u}_j + \bm{K}_{+1}^h \bm{u}_{j+1} \right) \geq 0.
    \end{equation}

    Applying Parseval's identity via the SDFT transforms the spatial summation \eqref{eq:global_energy_nonneg} into the frequency domain
    $[-\frac{\pi}{h}, \frac{\pi}{h}]$
    \begin{equation}\label{pa}
        \|\bm{U}\|_{\mathbb{K}^h}^2 = \frac{1}{2\pi h} \int_{-\frac{\pi}{h}}^{\frac{\pi}{h}} \widehat{\bm{u}}(\xi)^H \mathcal{K}^h(\xi) \widehat{\bm{u}}(\xi) \mathrm{d}\xi \geq 0.
    \end{equation}

    Suppose, by contradiction, there exists $\xi_0 \in [-\frac{\pi}{h}, \frac{\pi}{h}]$ and a non-zero vector $\bm{v}_0 \in \mathbb{C}^{k+1}$ such that
    $\bm{v}_0^H \mathcal{K}^h(\xi_0) \bm{v}_0 = -c < 0$.
    By the continuity of $\mathcal{K}^h(\xi)$, there exists a local neighborhood
$\mathcal{N}_\delta(\xi_0) = (\xi_0 - \delta, \xi_0 + \delta)$ such that 
\[\bm{v}_0^H \mathcal{K}^h(\xi) \bm{v}_0 \leq -\frac{c}{2} < 0, \quad \forall \xi \in \mathcal{N}_\delta(\xi_0).\]
We construct the sequence $\bm{U}$ by defining its Fourier transform as $\widehat{\bm{u}}(\xi) = \bm{v}_0 \chi_{\mathcal{N}_\delta(\xi_0)}(\xi)$, where $\chi$ denotes the indicator function. Substituting it into \eqref{pa} yields
\[\|\bm{U}\|_{\mathbb{K}^h}^2 = \frac{1}{2\pi h} \int_{\xi_0 - \delta}^{\xi_0 + \delta} \bm{v}_0^H \mathcal{K}^h(\xi) \bm{v}_0 \mathrm{d}\xi \leq \frac{1}{2\pi h} \left(-\frac{c}{2}\right) (2\delta) < 0.\]
Therefore, $\mathcal{K}^h(\xi)$ must be positive semi-definite for all $\xi \in [-\frac{\pi}{h}, \frac{\pi}{h}]$.

To prove the positive definiteness of $\mathcal{K}^h(\frac{\pi}{h})$, we proceed by contradiction.  Suppose there exists a non-zero vector $\bm{v} \in \mathbb{C}^{k+1}$ such that $\bm{v}^H \mathcal{K}^h(\frac{\pi}{h}) \bm{v} = 0$. For any integer $N > 0$, we define the sequence $\bm{U}^N = \{\bm{u}^N_j\}_{j \in \mathbb{Z}} \in \ell^2(\mathbb{Z}; \mathbb{C}^{k+1})$ by
    \begin{equation*}
        \bm{u}^N_j = \begin{cases}
            \bm{v} (-1)^j, & \text{if } |j| \le N, \\
            \mathbf{0}, & \text{if } |j| > N.
        \end{cases}
    \end{equation*}
Substituting $\bm{U}^N$ into  \eqref{eq:global_energy_nonneg}, we observe that for any $|j| \le N-1$, the local evaluation matches $\mathcal{K}^h(\frac{\pi}{h})$, which vanishes by our assumption
    \begin{equation*}
        \bm{v}^H (-1)^j \left[ \bm{K}_{-1}^h \bm{v} (-1)^{j-1} + \bm{K}_0^h \bm{v} (-1)^j + \bm{K}_{+1}^h \bm{v} (-1)^{j+1} \right] 
        = \bm{v}^H \mathcal{K}^h(\frac{\pi}{h}) \bm{v} = 0.
    \end{equation*}
    As internal contributions vanish, the quadratic form \eqref{eq:global_energy_nonneg} reduces to finite truncation boundary terms. Being independent of $N$ and determined by $\bm{v}$ and $\bm{K}_{\pm 1}^h, \bm{K}_0^h$, the quadratic form \eqref{eq:global_energy_nonneg} satisfies the bound $\|\bm{U}^N\|_{\mathbb{K}^h}^2 \le C(\bm{v})$ for all $N > 0$. Thus,
    \begin{equation*}
        \|\bm{U}^N\|_{\mathbb{K}^h}^2 = \sum_{j \in \mathbb{Z}} \| q_h^N \|_{L^2(I_j)}^2 \le C(\bm{v}).
    \end{equation*}
In the internal region $|j| \le N-1$, the truncated sequence $\bm{U}^N$ identically coincides with $u_h^*(x)$ generated by $\bm{u}_j^* = \bm{v}(-1)^j$. Let $q_h^*$ be the auxiliary variable of this infinite sequence. We then have $\sum_{j=-N+1}^{N-1} \| q_h^* \|_{L^2(I_j)}^2 \le C(\bm{v}).$
Because the sequence differs across adjacent cells only by a sign, which vanishes upon squaring, the local term $E_0 = \| q_h^* \|_{L^2(I_j)}^2$ is identical for all cells $j$. This yields $(2N-1)E_0 \le C(\bm{v})$. Letting $N \to \infty$, it holds that $E_0 = 0$. Consequently, $q_h^* = 0$ in each cell $I_j$, which yields
\[-\int_{I_j} u_h^* v_x \mathrm{d}x + \widetilde{u}_{h, j+\frac{1}{2}} v_{j+\frac{1}{2}}^- - \widetilde{u}_{h, j-\frac{1}{2}} v_{j-\frac{1}{2}}^+ = 0, \quad \forall v \in P^k(I_j).\]
Integrating the first term by parts, we obtain
\[\int_{I_j}  (u_h^*)_x v \mathrm{d}x + (\widetilde{u}_{h, j+\frac{1}{2}} - (u_h^*)_{j+\frac{1}{2}}^-) v_{j+\frac{1}{2}}^- - (\widetilde{u}_{h, j-\frac{1}{2}} - (u_h^*)_{j-\frac{1}{2}}^+) v_{j-\frac{1}{2}}^+ = 0.\]
Choosing $\widetilde{u}_h = u_h^-$ and setting $v(x) = (x - x_{j-\frac{1}{2}}) (u_h^*)_x \in P^k(I_j)$, the equation reduces to
$\int_{I_j} (x - x_{j-\frac{1}{2}}) |(u_h^*)_x|^2 \mathrm{d}x = 0$.
This implies $(u_h^*)_x \equiv 0$, meaning $u_h^*$ is a constant $c_j$ within each $I_j$. Choosing $v \equiv 1$ in the weak formulation yields 
\[\widetilde{u}_{h, j+\frac{1}{2}} - \widetilde{u}_{h, j-\frac{1}{2}} = (u_h^*)_{j+\frac{1}{2}}^- - (u_h^*)_{j-\frac{1}{2}}^- = c_j - c_{j-1} = 0.\]
Consequently, $\bm{v}(-1)^j = c$ for all $j \in \mathbb{Z}$, forcing $\bm{v} = \mathbf{0}$. This contradiction proves $\mathcal{K}^h(\frac{\pi}{h})$ is positive definite.

Noting that $\overline{\mathcal{K}^h(\xi)} = \mathcal{K}^h(-\xi)$, the matrix $\mathcal{K}^h(-\frac{\pi}{h})$ is the conjugate of $\mathcal{K}^h(\frac{\pi}{h})$. As both are Hermitian, the positive definiteness extends to $\xi = -\frac{\pi}{h}$.
\end{proof}

Lemma~\ref{lemma3.1} guarantees $k+1$ non-negative real eigenvalues for problem~\eqref{fully eigenvalue problem}, whose regularity we now examine to facilitate subsequent group velocity differentiation.
\begin{lemma} \label{lem:analytic_eigenvalues}
For the system \eqref{fully eigenvalue problem}, there exists a complete set of real analytic eigenpairs $\{\sigma_{j}(\xi), \bm{v}_{j}(\xi)\}_{j=1}^{k+1}$ forming an $\bm{M}^h$-orthonormal basis, satisfying
\begin{equation*}
    \bm{v}_i(\xi)^H \bm{M}^h \bm{v}_j(\xi) = \delta_{ij}.
\end{equation*}
\end{lemma}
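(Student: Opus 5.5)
The plan is to reduce the generalized Hermitian eigenproblem \eqref{fully eigenvalue problem} to a standard Hermitian one and then invoke Kato's analytic perturbation theory for self-adjoint holomorphic families.

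\textbf{Step 1: Cholesky reduction.} Since $\bm{M}^h$ is the local mass matrix in the monomial basis, it is a real symmetric Gram matrix (Lemma~\ref{lem:matrix_symmetry}). It is positive definite because the monomials are linearly independent on $I_j$. Hence $\bm{M}^h = \bm{L}\bm{L}^T$ with $\bm{L}$ real and invertible. Define
\[
\mathcal{A}(\xi) := \bm{L}^{-1}\mathcal{K}^h(\xi)\bm{L}^{-T}.
\]
Then $(\sigma,\bm{v})$ solves \eqref{fully eigenvalue problem} if and only if $\mathcal{A}(\xi)\bm{w} = \sigma\bm{w}$ with $\bm{w} = \bm{L}^T\bm{v}$. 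Moreover, $\bm{M}^h$-orthonormality of the $\bm{v}_j$ is equivalent to Euclidean orthonormality of the $\bm{w}_j$. By Lemma~\ref{lemma3.1}, $\mathcal{A}(\xi)$ is Hermitian positive semi-definite for real $\xi$.

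\textbf{Step 2: holomorphic self-adjoint family.} Since $\mathcal{K}^h(\xi) = \bm{K}_0^h + \bm{K}_{-1}^h e^{-\sqrt{-1}\xi h} + \bm{K}_{+1}^h e^{\sqrt{-1}\xi h}$, the map $\xi \mapsto \mathcal{A}(\xi)$ extends to an entire matrix function of $\xi\in\mathbb{C}$. Using $(\bm{K}_{-1}^h)^T = \bm{K}_{+1}^h$ and the reality of the blocks, it satisfies $\mathcal{A}(\bar\xi)^H = \mathcal{A}(\xi)$. This is exactly the setting of \cite[Theorem 6.1]{MR1335452}, Rellich's theorem for holomorphic symmetric families of finite matrices on a real interval. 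That theorem gives $k+1$ real-analytic functions $\sigma_j(\xi)$, repeated according to multiplicity, together with real-analytic eigenvectors $\bm{w}_j(\xi)$ that are orthonormal for every real $\xi$. Crossings of eigenvalues are permitted, which is why the eigenvalues are labeled as analytic branches rather than in increasing order.

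\textbf{Step 3: pull back.} Set $\bm{v}_j(\xi) := \bm{L}^{-T}\bm{w}_j(\xi)$. These are real analytic, and they satisfy
\[
\bm{v}_i^H\bm{M}^h\bm{v}_j = \bm{w}_i^H\bm{w}_j = \delta_{ij}.
\]
Since there are $k+1$ of them and they are linearly independent, they form a complete basis of $\mathbb{C}^{k+1}$.

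\textbf{Main obstacle.} The main point needing care is the global existence of analytic eigenvectors through eigenvalue crossings. A naive ordered labeling $\sigma_1\le\dots\le\sigma_{k+1}$ is only Lipschitz at crossings, and generic non-Hermitian families can have branch points. Both difficulties are removed by the self-adjointness in Step 2: Rellich--Kato applies only because the Hermitian structure forces the Puiseux expansions to be ordinary power series. So the verification to carry out carefully is that $\mathcal{A}$ is holomorphic in $\xi$ and Hermitian on the real axis.

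A minor point concerns the domain. The result is local on intervals, but $\mathbb{R}$ is connected and the family is entire, so the analytic branches extend to all of $\Pi_h$, and indeed $2\pi/h$-periodically to all of $\mathbb{R}$.
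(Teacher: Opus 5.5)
Your argument is correct and is essentially the paper's: you reduce to a standard Hermitian analytic family, apply Kato's Theorem~6.1, and pull the orthonormal analytic eigenvectors back to get $\bm{M}^h$-orthonormality, with your Cholesky factor $\bm{L}$ playing the role of the paper's $(\bm{M}^h)^{-1/2}$ (an immaterial difference). Drop your closing aside that the branches are $2\pi/h$-periodic, which is not needed for the lemma and is not justified: only the unordered spectrum is periodic, and analytic branches can be permuted by the shift at crossings. For example, with $h=1$, $\bm{M}^h=\bm{I}$, $\bm{K}_0=\mathrm{diag}(1,3)$, $\bm{K}_{+1}=\frac12\bigl(\begin{smallmatrix}1&1\\-1&-1\end{smallmatrix}\bigr)$ and $\bm{K}_{-1}=\bm{K}_{+1}^T$, the branches are $2\pm2\sin(\xi/2)$, which are interchanged by $\xi\mapsto\xi+2\pi$.
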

\begin{proof}
    Introducing $\bm{v}(\xi) = (\bm{M}^h)^{-\frac{1}{2}} \bm{u}(\xi)$ converts \eqref{fully eigenvalue problem} into the standard eigenvalue problem $\widetilde{\mathcal{K}}^h(\xi) \bm{u}(\xi) = \sigma(\xi) \bm{u}(\xi)$, where $\widetilde{\mathcal{K}}^h(\xi) = (\bm{M}^h)^{-\frac{1}{2}} \mathcal{K}^h(\xi) (\bm{M}^h)^{-\frac{1}{2}}$.
    Since $\mathcal{K}^h(\xi)$ comprises finite linear combinations of $e^{\sqrt{-1} j \xi h}$, the matrix $\widetilde{\mathcal{K}}^h(\xi)$ remains a real analytic Hermitian family. 
    
   Applying Kato's analytic perturbation theory \cite[Theorem 6.1]{MR1335452},this one-paramet\-er family possesses real analytic orthonormal eigenpairs $\{\sigma_j(\xi), \bm{u}_j(\xi)\}_{j=1}^{k+1}$, resolving any eigenvalue crossings. Transforming back yields the $\bm{M}^h$-orthonormality and preserves the real analyticity of $\bm{v}_j(\xi)$. 
\end{proof}

Having established the real analyticity of these eigenmodes, we now proceed to identify the physical and spurious branches.
\begin{lemma}\label{fully proposition1}
There exists a unique physical branch $(\sigma_{ph}(\xi), \bm{v}_{ph}(\xi))$ satisfying 
\begin{equation*}
    \sigma_{ph}(0)=0 \quad \text{and} \quad \bm{v}_{ph}(0) \propto \bm{e}_0 = (1,0,\dots,0)^T.
\end{equation*}
The remaining $k$ spurious branches $\{(\sigma_{sp,i}(\xi), \bm{v}_{sp,i}(\xi))\}_{i=1}^{k}$ satisfy $\sigma_{sp,i}(0)>0$.
\end{lemma}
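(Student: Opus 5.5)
The plan is to reduce the statement to the claim that the kernel of the Hermitian positive semi-definite matrix $\mathcal{K}^h(0)$ is exactly one-dimensional and spanned by $\bm{e}_0$. Since $\bm{M}^h$ is symmetric positive definite, the generalized eigenvalues of \eqref{fully eigenvalue problem} at $\xi=0$ are non-negative by Lemma~\ref{lemma3.1}. The multiplicity of the eigenvalue $0$ equals $\dim\ker\mathcal{K}^h(0)$. Once this kernel is shown to be $\mathrm{span}\{\bm{e}_0\}$, exactly one of the real-analytic branches from Lemma~\ref{lem:analytic_eigenvalues} vanishes at $\xi=0$, with eigenvector proportional to $\bm{e}_0$. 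That branch is $(\sigma_{ph},\bm{v}_{ph})$, and the other $k$ branches must have $\sigma_{sp,i}(0)>0$. Uniqueness is automatic because the eigenvalue $0$ is simple, so no second analytic branch can pass through it.

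\textbf{Step 1: $\bm{e}_0 \in \ker\mathcal{K}^h(0)$.} Take the constant sequence $\bm{u}_j=\bm{e}_0$, that is, $u_h\equiv 1$. With the alternating flux $\widetilde{u_h}=u_h^-\equiv 1$, the right-hand side of \eqref{eq:weak_a} becomes $-\int_{I_j}v_x+v^-_{j+1/2}-v^+_{j-1/2}=0$. Hence $q_h\equiv0$, and then \eqref{eq:weak_b} gives a zero stiffness contribution. So $(\bm{K}_{-1}^h+\bm{K}_0^h+\bm{K}_{+1}^h)\bm{e}_0=\mathcal{K}^h(0)\bm{e}_0=\bm{0}$.

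\textbf{Step 2: the kernel is at most one-dimensional.} I would reuse the truncation argument from the proof of Lemma~\ref{lemma3.1}, with $(-1)^j$ replaced by $1$. Suppose $\bm{v}^H\mathcal{K}^h(0)\bm{v}=0$ and set $\bm{u}^N_j=\bm{v}$ for $|j|\le N$. The interior contributions vanish, so $\|\bm{U}^N\|^2_{\mathbb{K}^h}\le C(\bm{v})$ uniformly in $N$. For the periodic extension $u_h^*$ (the same polynomial in every cell), the quantity $\|q_h^*\|^2_{L^2(I_j)}$ is the same in every cell, and it must therefore vanish. Thus $q_h^*\equiv0$. The test function $v=(x-x_{j-1/2})(u_h^*)_x$ then gives $(u_h^*)_x\equiv0$, exactly as before, so $\bm{v}\propto\bm{e}_0$.

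One point needs care here. It is cleaner to note directly that $\|\bm{U}\|_{\mathbb{K}^h}^2=\sum_j\|q_h\|^2_{L^2(I_j)}$, so $\mathcal{K}^h(\xi)$ is the Gram form of the map from Bloch data to $q_h$ on one cell. Vanishing of the form at $\xi=0$ is then equivalent to $q_h\equiv 0$ for the periodic function. This avoids the boundary-term bookkeeping, and I would present it that way.

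\textbf{Main obstacle.} I expect the identification of branches through Kato analyticity to be the main difficulty. Lemma~\ref{lem:analytic_eigenvalues} orders the branches analytically rather than by size, so one must argue that exactly one analytic branch attains the value $0$ at $\xi=0$. This follows from the simplicity of the zero eigenvalue: the multiset of eigenvalues at $\xi=0$ contains $0$ exactly once. The branch taking this value inherits the eigenvector $\bm{e}_0$, normalized in the $\bm{M}^h$-norm. All other branches take strictly positive values at $0$ because $\mathcal{K}^h(0)\succeq0$ with a one-dimensional kernel.
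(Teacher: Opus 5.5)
Your proposal is correct and follows essentially the same route as the paper. Both show $\ker\mathcal{K}^h(0)=\mathrm{span}\{\bm{e}_0\}$ via the identity $\bm{v}^H\mathcal{K}^h(0)\bm{v}=\int_{I_j}|q_h(\bm{v})|^2\,\mathrm{d}x$ (your ``cleaner'' Gram-form option is exactly what the paper uses), conclude that the zero eigenvalue of the semi-definite pencil is simple so exactly one analytic branch vanishes at $\xi=0$, and your explicit checks ($u_h\equiv 1\Rightarrow q_h\equiv 0$, and $q_h\equiv 0\Rightarrow u_h$ constant via the test function $(x-x_{j-\frac{1}{2}})(u_h^*)_x$) just fill in what the paper calls ``consistency'' and ``straightforward calculation.''
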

\begin{proof}
    At $\xi = 0$, the matrix 
    $ \mathcal{K}^h(0) = \bm{K}^h_0 + \bm{K}^h_{-1} + \bm{K}^h_{+1}$.
    The consistency of the LDG scheme ensures that row sums vanish, yielding $\mathcal{K}^h(0) \bm{e}_0 = \bm{0}$.
    Thus, the system admits a zero eigenvalue $\sigma(0) = 0$ with eigenvector $\bm{v}(0) \propto \bm{e}_0$. We denote this specific  eigenpair as $\{\sigma_{ph}, \bm{v}_{ph}\}$.

    To determine the algebraic multiplicity of the zero eigenvalue $\sigma_{ph}(0)=0$, we examine the quadratic form associated with the stiffness operator.  A straightforward calculation reveals that
    \[ \bm{v}^H \mathcal{K}^h(0) \bm{v} = \int_{I_j} |q_h(\bm{v})|^2 \mathrm{d}x \ge 0, \quad \forall \bm{v} \in \mathbb{C}^{k+1}. \]
    The equality holds if and only if $\|q_h\|_{L^2(I_j)} = 0$, which implies that $u_h$ is a constant. Therefore, $\operatorname{Ker}(\mathcal{K}^h(0)) = \operatorname{span}\{\bm{e}_0\}$. 
    Given that $\mathcal{K}^h(0)$ is Hermitian positive semi-definite and $\bm{M}^h$ is positive definite, the zero eigenvalue $\sigma_{ph}(0)$ has algebraic multiplicity 1. It follows that the remaining $k$ spurious eigenvalues satisfy $\sigma_{sp,i}(0) > 0$.
\end{proof}

The following Lemma establishes the symmetry of the eigenvalues, which is fundamental for the subsequent asymptotic analysis.
\begin{lemma}\label{fully even function}
The analytic eigenvalues of the system \eqref{fully eigenvalue problem} are even functions with respect to $\xi$, satisfying $\sigma_{m}(\xi) = \sigma_{m}(-\xi), \,m= ph,sp_1,\cdots,sp_k$.
\end{lemma}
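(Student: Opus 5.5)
The plan is to compare the matrix pencil at $\xi$ and $-\xi$ and then pass from the spectrum to the analytic branches. By Lemma~\ref{lem:matrix_symmetry}, all blocks $\bm{K}^h_{0},\bm{K}^h_{\pm1},\bm{M}^h$ are real; with the monomial basis and a uniform mesh, these entries are computed from real integrals. Hence
\[
\overline{\mathcal{K}^h(\xi)}=\bm{K}_0^h+\bm{K}_{-1}^h e^{\sqrt{-1}\xi h}+\bm{K}_{+1}^h e^{-\sqrt{-1}\xi h}=\mathcal{K}^h(-\xi).
\]
(This identity was already used at the end of the proof of Lemma~\ref{lemma3.1}.) Taking the complex conjugate of $(\mathcal{K}^h(\xi)-\sigma\bm{M}^h)\bm{v}=\bm{0}$ and using that $\sigma$ is real (Lemma~\ref{lemma3.1}) gives $(\mathcal{K}^h(-\xi)-\sigma\bm{M}^h)\overline{\bm{v}}=\bm{0}$. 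Therefore the characteristic polynomials satisfy $\det(\mathcal{K}^h(-\xi)-\sigma\bm{M}^h)=\overline{\det(\mathcal{K}^h(\xi)-\sigma\bm{M}^h)}$ for real $\sigma$. It follows that the spectra at $\xi$ and $-\xi$ coincide as multisets.

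The main obstacle is passing from \emph{equal spectra} to \emph{equality of each labelled analytic branch}, because Kato's branches from Lemma~\ref{lem:analytic_eigenvalues} may cross. I would argue as follows. The functions $\tau_m(\xi):=\sigma_m(-\xi)$ are again real-analytic and, by the previous step, give the same eigenvalue multiset at every $\xi$. Since eigenvalue branches of an analytic Hermitian family are unique up to relabelling (Kato~\cite{MR1335452}), there is a permutation $\pi$ with $\sigma_m(-\xi)=\sigma_{\pi(m)}(\xi)$ for all $\xi$. To see this concretely: two distinct analytic functions agree only on a discrete set, so each $\tau_m$ coincides with some $\sigma_{m'}$ on a set with an accumulation point, and hence everywhere by the identity theorem.

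It then remains to show that $\pi$ preserves the labels $ph$, $sp,i$. For the physical branch, $\sigma_{ph}(-0)=0$, and by Lemma~\ref{fully proposition1} it is the unique branch vanishing at $0$. Hence $\pi(ph)=ph$, and so $\sigma_{ph}(\xi)=\sigma_{ph}(-\xi)$. For the spurious branches, $\pi$ permutes $\{sp,1,\dots,sp,k\}$. Evaluating $\sigma_{sp,i}(-\xi)=\sigma_{sp,\pi(i)}(\xi)$ at $\xi=0$ shows that the branches exchanged by $\pi$ share the value at $0$. If $\pi(i)\neq i$, the two functions $\sigma_{sp,i}$ and $\sigma_{sp,\pi(i)}$ are mirror images of each other, $\sigma_{sp,\pi(i)}(\xi)=\sigma_{sp,i}(-\xi)$.

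In that case I would simply \emph{relabel}: on each orbit of $\pi$ the branches form a reflection-invariant family. For a transposition $\{a,b\}$, the symmetric and antisymmetric parts are not eigenvalues, so instead I would use the freedom in Kato's construction. Since the analytic branches are canonical only up to permutation, I would exploit the alternative choice $\tilde\sigma_a(\xi)=\sigma_a(\xi)$ for $\xi\ge0$ and $\sigma_b(\xi)$ for $\xi<0$. This is analytic only if $\sigma_a\equiv\sigma_b$, which shows that the genuine orbits must be trivial unless the branches coincide identically. The clean argument is therefore: $\sigma_a(-\xi)=\sigma_b(\xi)$ with $\sigma_a\not\equiv\sigma_b$ would force the branches to cross at $\xi=0$. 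I expect this is exactly where the paper either invokes the evenness via the conjugation $\overline{\bm{v}}$ (choosing $\bm{v}_m(-\xi)=\overline{\bm{v}_m(\xi)}$ as a consistent analytic labelling), or accepts the ordering convention. I would adopt the former: define the labelling by analytic continuation from $\xi=0$, where the conjugation map fixes each eigenspace of the real symmetric pencil $(\mathcal{K}^h(0),\bm{M}^h)$. This makes $\pi$ act trivially on branches that are distinct at $0$, and for branches equal at $0$ a relabelling compatible with Kato's theorem gives the even choice.
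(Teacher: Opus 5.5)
Your conjugation step ($\overline{\mathcal{K}^h(\xi)}=\mathcal{K}^h(-\xi)$, hence identical characteristic polynomials and spectra at $\pm\xi$) is exactly the paper's proof. Your identity-theorem step producing a permutation $\pi$ with $\sigma_m(-\xi)=\sigma_{\pi(m)}(\xi)$ is correct. So is $\pi(ph)=ph$, which follows from the simplicity of the zero eigenvalue (Lemma~\ref{fully proposition1}). This is more careful than the paper, which passes from equal spectra to branchwise evenness only ``by the uniqueness of the eigenvalues''. Setting $\xi=0$ gives $\sigma_m(0)=\sigma_{\pi(m)}(0)$, so $\pi$ fixes every branch whose value at $0$ is a simple eigenvalue; the conjugation-invariance of eigenspaces at $\xi=0$ adds nothing beyond this.

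The genuine gap is the case of spurious branches sharing a value at $\xi=0$, and none of your resolutions closes it. The non-analyticity of the glued $\tilde\sigma_a$ says nothing about whether $\sigma_a(-\xi)=\sigma_b(\xi)$ with $\sigma_a\not\equiv\sigma_b$ can occur. A crossing at $\xi=0$ is no contradiction, since Lemma~\ref{lem:analytic_eigenvalues} allows crossings. There is also no relabelling freedom to exploit: analytic branches that are not identically equal are unique up to permutation, and gluing $\bm{v}_m(-\xi):=\overline{\bm{v}_m(\xi)}$ hits the same obstruction. In fact the claimed ``even choice'' is false for the structure you use. With $\bm{M}=\bm{I}$, $\bm{K}_0=2\bm{I}$, $\bm{K}_{+1}=\bm{K}_{-1}^T=\frac12\begin{pmatrix}0&-1\\1&0\end{pmatrix}$ one gets
\[
\mathcal{K}(\xi)=2\bm{I}+\sin(\xi h)\begin{pmatrix}0&-\sqrt{-1}\\ \sqrt{-1}&0\end{pmatrix}.
\]
This matrix is Hermitian positive definite, satisfies $\overline{\mathcal{K}(\xi)}=\mathcal{K}(-\xi)$, and is real symmetric at $\xi=0$. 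Yet its only analytic branches are $2\pm\sin(\xi h)$, a mirror pair in which neither branch is even. Appending a decoupled scalar block $2-2\cos(\xi h)$ even supplies a simple zero ``physical'' branch.

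Hence no abstract argument, yours or the paper's, yields evenness of each spurious branch; an LDG-specific input is required. The natural one is that the $k$ positive eigenvalues of the real symmetric pencil $(\mathcal{K}^h(0),\bm{M}^h)$ are simple. Then $\sigma_{sp,i}(0)=\sigma_{\pi(sp,i)}(0)$ forces $\pi=\mathrm{id}$, and your proof is complete.

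This simplicity is not automatic. For $-\partial_x^2$ in Bloch form, the branches $(\xi\pm 2\pi/h)^2$ are exactly such a mirror pair meeting at $\xi=0$. You must therefore show that the alternating-flux discretization splits these degeneracies. For $k=1$ there is nothing to prove. For $k=2$ and $h=2$, a direct computation gives the distinct values $\frac{39\pm\sqrt{621}}{2}$, but general $k$ needs an argument.

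Without this input, you have proved evenness of $\sigma_{ph}$ and evenness of the spectrum as a multiset only. That is weaker than the lemma. It does not give $\sigma_{sp,i}'(0)=0$, which is what the proof of Theorem~\ref{thm:fully_group_velocity_limits} uses.
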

\begin{proof}
Using the symmetry $\mathcal{K}^h(-\xi) = \overline{\mathcal{K}^h(\xi)}$, we can derive
\begin{align*}
    0 = \det(\overline{\mathcal{K}^h(\xi)} - \sigma(-\xi) \bm{M}^h) = \overline{\det(\mathcal{K}^h(\xi) - \sigma(-\xi) \bm{M}^h)}=\det(\mathcal{K}^h(\xi) - \sigma(-\xi) \bm{M}^h).
\end{align*}
Thus, the value $\sigma(-\xi)$ is also a root of the characteristic equation at $\xi$. This logic applies to all eigenvalues, meaning the set of eigenvalues at $-\xi$ is the same as the set of eigenvalues at $\xi$. By the uniqueness of the  eigenvalues, we conclude
\begin{equation*}
    \sigma_{ph}(\xi) = \sigma_{ph}(-\xi), \quad \text{and} \quad \sigma_{sp,i}(\xi) = \sigma_{sp,i}(-\xi), \quad i=1, \dots, k.
\end{equation*}
This completes the proof.
\end{proof}

We are now well-positioned to verify the low-frequency consistency of the continuous wave operator under our fully discrete framework \eqref{eq:global_fully_discrete}.

\begin{lemma}\label{fully physical eigenvalue limit}
The physical temporal frequency satisfies $\lim\limits_{\xi \to 0} \omega_{ph} = \xi$.
\end{lemma}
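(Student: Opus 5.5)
The statement is to be read as $\lim_{\xi\to 0}\omega_{ph}(\xi)/\xi=1$, which is equivalent to $\sigma_{ph}''(0)=2$. The plan is to compute the Taylor coefficient of $\sigma_{ph}$ at the origin by analytic perturbation of the simple eigenvalue $\sigma_{ph}(0)=0$, and then pass from $\sigma_{ph}$ to $\omega_{ph}$ through the $\arcsin$ in Definition~\ref{definition of fully physical mode}.

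\textbf{Step 1 (form of $\sigma_{ph}$ near $0$).} By Lemma~\ref{lem:analytic_eigenvalues}, Lemma~\ref{fully proposition1} and Lemma~\ref{fully even function}, $\sigma_{ph}$ is real analytic and even with $\sigma_{ph}(0)=0$. Hence $\sigma_{ph}(\xi)=s\,\xi^2+O(\xi^4)$, and everything reduces to proving $s=1$.

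\textbf{Step 2 (expansion of the eigenpair).} Write $\mathcal{K}^h(\xi)=\mathcal{K}_0+\xi\mathcal{K}_1+\frac{\xi^2}{2}\mathcal{K}_2+O(\xi^3)$, with $\mathcal{K}_1=\sqrt{-1}\,h(\bm{K}^h_{+1}-\bm{K}^h_{-1})$ and $\mathcal{K}_2=-h^2(\bm{K}^h_{+1}+\bm{K}^h_{-1})$. Take the analytic eigenvector $\bm{v}_{ph}(\xi)=c\,\bm{e}_0+\xi\bm{v}_1+O(\xi^2)$.
\begin{itemize}
\item At order $\xi$ the eigenvalue equation gives $\mathcal{K}_0\bm{v}_1=-c\,\mathcal{K}_1\bm{e}_0$. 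This is solvable because $\mathcal{K}_1\bm{e}_0\perp\bm{e}_0$, which follows from the row-sum consistency and the symmetry in Lemma~\ref{lem:matrix_symmetry}.
\item At order $\xi^2$, pairing with $\bm{e}_0$ gives
\[
s\,c\,\bm{e}_0^T\bm{M}^h\bm{e}_0=\bm{e}_0^T\Big(\tfrac{c}{2}\mathcal{K}_2\bm{e}_0+\mathcal{K}_1\bm{v}_1\Big).
\]
\end{itemize}
We have $\bm{e}_0^T\bm{M}^h\bm{e}_0=h$, the mass of the constant function on one cell.

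\textbf{Step 3 (interpreting the order-$\xi$ corrector).} Rather than compute the matrices explicitly, we read the order-$\xi$ equation back through the weak form \eqref{eq:weak_a}--\eqref{eq:weak_b}. The Bloch sequence $\bm{u}_j=e^{\sqrt{-1}\xi x_j}\bm{v}_{ph}(\xi)$ expanded to first order in $\xi$ is the LDG representation of $1+\sqrt{-1}\xi x$ up to a constant. For $k\ge1$ the alternating flux reproduces linear functions exactly, so $q_h=\sqrt{-1}\xi+O(\xi^2)$ cellwise. We then use the energy identity $\bm{v}^H\mathcal{K}^h(\xi)\bm{v}=\int_{I_j}|q_h|^2\,\mathrm{d}x$ already used in the proof of Lemma~\ref{fully proposition1}, in its Bloch-periodic form. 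Combined with the Rayleigh quotient $\sigma_{ph}=\bm{v}_{ph}^H\mathcal{K}^h\bm{v}_{ph}/\bm{v}_{ph}^H\bm{M}^h\bm{v}_{ph}$, this gives $\sigma_{ph}(\xi)=\xi^2 h/h+O(\xi^3)$, hence $s=1$. The $O(\xi^3)$ term then vanishes by evenness.

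For $k=0$ there is no room for $\bm{v}_1$, and we compute directly $\sigma_{ph}(\xi)=\frac{4}{h^2}\sin^2(\xi h/2)$, which again gives $s=1$.

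I expect Step 3 to be the main obstacle: justifying that the quadratic form evaluated on the analytic eigenvector equals $\xi^2 h+O(\xi^3)$ using only polynomial reproduction by LDG. As a fallback I would test the Rayleigh quotient on the cellwise $L^2$ projection of $e^{\sqrt{-1}\xi x}$ to get the upper bound $\sigma_{ph}\le\xi^2(1+O(\xi))$. The matching lower bound would come from the min–max principle, since $\sigma_{ph}$ is the smallest eigenvalue near $0$ (the spurious eigenvalues $\sigma_{sp,i}$ stay bounded away from zero there), together with a residual estimate on $\mathcal{K}^h(\xi)\bm{w}-\xi^2\bm{M}^h\bm{w}$.

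\textbf{Step 4 (passing to $\omega_{ph}$).} From Definition~\ref{definition of fully physical mode},
\[
\omega_{ph}=\mathrm{sign}(\xi)\,\frac{2}{\Delta t}\arcsin\!\Big(\frac{\sqrt{\sigma_{ph}}\,\Delta t}{2}\Big)=\mathrm{sign}(\xi)\sqrt{\sigma_{ph}}\,\big(1+O(\sigma_{ph}\Delta t^2)\big).
\]
Since $\sqrt{\sigma_{ph}}=|\xi|\,(1+O(\xi^2))$, it follows that $\omega_{ph}(\xi)=\xi+O(\xi^3)$, which yields the limit.
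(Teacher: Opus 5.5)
\textbf{Verdict.} Your plan is correct, and it differs from the paper's proof at the one point that matters.

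\textbf{Comparison with the paper.} The paper differentiates $\sigma_{ph}=\bm v_{ph}^H\mathcal K^h\bm v_{ph}$ twice. It drops the cross term $\bm v_{ph}^H\mathcal K^{h\prime}\bm v_{ph}'+(\bm v_{ph}')^H\mathcal K^{h\prime}\bm v_{ph}$ by asserting $\mathcal K^{h\prime}(0)\bm e_0=\bm 0$. It then reads off $\sigma_{ph}''(0)=\frac1h\bm e_0^T\mathcal K^{h\prime\prime}(0)\bm e_0$ from the value $\bm e_0^T(\bm K^h_{-1}+\bm K^h_{+1})\bm e_0=-2/h$.

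Your order-$\xi^2$ identity is the same second-order perturbation formula, but you keep the cross term $\bm e_0^T\mathcal K_1\bm v_1$. You then evaluate the combination structurally, through polynomial reproduction of the alternating flux and $\bm v^H\mathcal K^h\bm v=\int_{I_j}|q_h|^2$, rather than entry by entry.

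This is more than cosmetic, because by my computation both entry-wise facts hold only for $k=0$.
\begin{itemize}
\item Write $q_h|_{I_j}=\bm A\bm u_j+\bm B\bm u_{j-1}$ in coefficients. The identity $\|\bm U\|_{\mathbb K^h}^2=\sum_j\|q_h\|_{L^2(I_j)}^2$ gives $\bm K^h_{+1}=\bm B^T\bm M^h\bm A$ and $\bm K^h_{-1}=\bm A^T\bm M^h\bm B$.
\item One has $\bm A\bm e_0=-\bm B\bm e_0=\bm\ell$, the coefficients of the Riesz representer $\ell$ of $v\mapsto v(x_{j-1/2}^+)$ in $P^k(I_j)$.
\item Hence $\bm e_0^T\bm K^h_{\pm1}\bm e_0=-\|\ell\|_{L^2(I_j)}^2=-(k+1)^2/h$, so the diagonal term alone equals $2(k+1)^2$.
\item For $k=1$ (monomial basis, $\bm M^h=\operatorname{diag}(h,h/3)$), one finds $\mathcal K^{h\prime}(0)\bm e_0=(0,-6\sqrt{-1})^T$. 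With $\bm v_1=\sqrt{-1}c\frac h2\bm e_1$, your corrector term contributes exactly $-6$, bringing $8$ down to $2$.
\end{itemize}
So the paper's route is shorter, but yours is the one that goes through for every $k$.

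\textbf{What Step 3 still needs.} You assert, rather than prove, the identification $\bm v_1\equiv\sqrt{-1}c\frac h2\bm e_1$ modulo $\bm e_0$. It closes in a few lines:
\begin{itemize}
\item The order-$\xi$ equation fixes $\bm v_1$ modulo $\ker\mathcal K_0=\operatorname{span}\{\bm e_0\}$. So it suffices to show $\mathcal K^h(\xi)\bm w(\xi)=O(\xi^2)$ for $\bm w=c(\bm e_0+\sqrt{-1}\xi\frac h2\bm e_1)$.
\item Use $\mathcal K^h(\xi)=\bm G(\xi)^H\bm M^h\bm G(\xi)$ with $\bm G(\xi)=\bm A+\bm Be^{-\sqrt{-1}\xi h}$.
\item Integrating \eqref{eq:weak_a} by parts gives $q_h=(u_h)_x+(u_h^+-u_h^-)_{j-1/2}\,\ell$ on $I_j$. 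The twisted linear function $e^{\sqrt{-1}\xi x_j}c(1+\sqrt{-1}\xi(x-x_j))$ has interface jumps of order $\xi^3$, so $\bm G\bm w=\sqrt{-1}\xi c\,\bm e_0+O(\xi^3)$.
\item Also $\bm G(0)^H\bm M^h\bm e_0=\bm 0$: taking $v\equiv1$ in \eqref{eq:weak_a} gives $\int_{I_j}q_h=0$ whenever $\bm u_j=\bm u_{j-1}$.
\item Hence $\mathcal K^h\bm w=\sqrt{-1}\xi c\,\bm G(\xi)^H\bm M^h\bm e_0+O(\xi^3)=O(\xi^2)$.
\end{itemize}

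\textbf{On your fallback.} For $k=1$, the residual $\mathcal K^h\bm w-\xi^2\bm M^h\bm w$ of the $L^2$-projection test vector is of exact order $\xi^2$, so a plain residual bound gives nothing there. Two alternatives work:
\begin{itemize}
\item Temple's inequality, using the gap to the spurious branches.
\item More simply, apply $\|q_h\|_{L^2(I_j)}^2\ge\frac1h|\int_{I_j}q_h|^2=\frac4h\sin^2(\frac{\xi h}2)|u_h(x_{j+1/2}^-)|^2$ to the exact eigenvector. This gives $\sigma_{ph}\ge\xi^2(1+O(\xi))$ without identifying $\bm v_1$ at all.
\end{itemize}
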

\begin{proof}
    We consider its Taylor expansion at $\xi=0$
\begin{equation*}
    \sigma_{ph}(\xi) = \sigma^{(2)} \xi^2 + \frac{\sigma_{ph}^{(4)}(\eta)}{24} \xi^4, \qquad \text{where } \sigma^{(2)} = \frac{1}{2} \sigma_{ph}''(0),
\end{equation*}
for some intermediate $\eta$ between $0$ and $\xi$.

Let the eigenvector be normalized such that $\bm{v}_{ph}(\xi)^H \bm{M}^h \bm{v}_{ph}(\xi) = 1$, which implies
$\sigma_{ph}(\xi) = \bm{v}_{ph}^H(\xi) \mathcal{K}^h(\xi) \bm{v}_{ph}(\xi)$.
Taking the first-order derivative with respect to $\xi$ yields
\begin{equation}\label{first-order total derivative}
    \sigma_{ph}'(\xi) = \bm{v}_{ph}^H \mathcal{K}^{h \prime}(\xi) \bm{v}_{ph} + \bm{v}_{ph}^H \mathcal{K}^h(\xi) \bm{v}_{ph}' + (\bm{v}_{ph}')^H \mathcal{K}^h(\xi) \bm{v}_{ph}.
\end{equation}
Utilizing the Hermitian property of $\mathcal{K}^h(\xi)$, the last two terms of equation \eqref{first-order total derivative} simplify to $\sigma_{ph} (\bm{v}_{ph}^H \bm{M}^h \bm{v}_{ph}' + (\bm{v}_{ph}')^H \bm{M}^h \bm{v}_{ph})$. Differentiating the normalization condition $\bm{v}_{ph}^H \bm{M}^h \bm{v}_{ph} = 1$ gives $\bm{v}_{ph}^H \bm{M}^h \bm{v}_{ph}' + (\bm{v}_{ph}')^H \bm{M}^h \bm{v}_{ph} = 0$, thus confirming the result $\sigma_{ph}'(\xi) = \bm{v}^H_{ph}(\xi) \mathcal{K}^{h \prime}(\xi) \bm{v}_{ph}(\xi)$.
Differentiating once more with respect to $\xi$
\begin{equation*}
    \sigma_{ph}''(\xi) = \bm{v}_{ph}^H \mathcal{K}^{h \prime\prime}(\xi) \bm{v}_{ph} + \bm{v}_{ph}^H \mathcal{K}^{h \prime}(\xi) \bm{v}_{ph}' + (\bm{v}_{ph}')^H \mathcal{K}^{h \prime}(\xi) \bm{v}_{ph}.
\end{equation*}
At $\xi = 0$, we have $\bm{v}_{ph}(0) = \frac{1}{\sqrt{h}}\bm{e}_0$. Straightforward calculations using the LDG scheme yield that $\mathcal{K}^{h \prime}(0) \bm{e}_0 = \bm{0}$.
Consequently,  $\bm{v}_{ph}^H \mathcal{K}^{h \prime}(\xi) \bm{v}_{ph}' = (\bm{v}_{ph}')^H \mathcal{K}^{h \prime}(\xi) \bm{v}_{ph}=0$.
Substituting this into the second derivative, we arrive at
\begin{equation*}
    \sigma_{ph}''(0) = \frac{1}{h} \bm{e}_0^H \mathcal{K}^{h \prime\prime}(0) \bm{e}_0.
\end{equation*}
Evaluating the second derivative of $\mathcal{K}^h(\xi)$ at $\xi=0$ yields $\mathcal{K}^{h \prime\prime}(0) = -h^2 (\bm{K}_{-1} + \bm{K}_{+1})$.
Utilizing the consistency properties of the local matrices, we have $\bm{e}_0^T (\bm{K}_{-1} + \bm{K}_{+1}) \bm{e}_0 = -2/h$. Therefore, $\sigma_{ph}''(0) = h^{-1} (-h^2) (-2/h) = 2$.
Recalling $\omega_{ph}(\xi) = \mathrm{sign}(\xi)\frac{2}{\Delta t}\arcsin{\left(\frac{\sqrt{\sigma_{ph}(\xi)}\Delta t}{2}\right)}$, we have
\begin{equation*}
    \lim_{\xi \to 0} \omega_{ph} = \lim_{\xi \to 0} \mathrm{sign}(\xi)\sqrt{ \xi^2 + \frac{\sigma_{ph}^{(4)}(\eta)}{24} \xi^4} = \xi.
\end{equation*}
This completes the proof.
\end{proof}
Theorem~\ref{thm:spectral_structure} follows directly from the preceding Lemmas.

\subsection{Proof of Theorem~\ref{thm:fully_group_velocity_limits}}\label{subsection 3.1}
\begin{proof}
First, by Lemma~\ref{fully physical eigenvalue limit}, the physical mode satisfies $\lim\limits_{\xi \to 0} v_{g}^{ph}(\xi) =  1$.

For the spurious modes when $\xi>0$, applying the chain rule yields
\begin{equation}\label{the expression for group velocity}
    v_{g}^{sp,j}(\xi) = \frac{\mathrm{d}}{\mathrm{d}\xi} \left(\frac{2}{\Delta t}\arcsin{\left(\frac{\sqrt{\sigma_{sp,j}(\xi)}\Delta t}{2}\right)}\right) = \frac{\sigma_{sp,j}'(\xi)}{\sqrt{4-\sigma_{sp,j}(\xi)(\Delta t)^2} \sqrt{\sigma_{sp,j}(\xi)}}.
\end{equation}

We analyze the asymptotic behavior of this expression as $\xi \to 0$. By Lemma~\ref{fully even function} and Lemma~\ref{lem:analytic_eigenvalues}, $\sigma_{sp,j}(\xi)$ is a smooth even function, yielding the Taylor expansions
\begin{align}\label{Taylor sigma}
\begin{aligned}
    \sigma_{sp,j}(\xi) &\!=\! \sigma_{sp,j}(0) \!+ \!\frac{\sigma_{sp,j}''(0)}{2}\!\xi^2 \!+\! \frac{\sigma_{sp,j}^{(4)}(\eta_1)}{24}\!\xi^4, 
    \sigma_{sp,j}'(\xi) \!=\! \sigma_{sp,j}''(0)\xi \!+ \!\frac{\sigma_{sp,j}^{(4)}(\eta_2)}{6}\!\xi^3.
\end{aligned}
\end{align}
To evaluate the denominator's limit in \eqref{the expression for group velocity}, we consider two cases.

\textbf{Case 1:} $4 - \sigma_{sp,j}(0)(\Delta t)^2 > 0$. As $\xi \to 0$, the denominator approaches a positive constant. Since the numerator $\sigma_{sp,j}'(\xi) \to 0$, it follows that
\begin{equation*}
    \lim_{\xi \to 0^{\pm}} v_{g}^{sp,j}(\xi) = 0.
\end{equation*}

\textbf{Case 2:} $4 - \sigma_{sp,j}(0)(\Delta t)^2 = 0$. Using the Taylor expansion, we get
\begin{align*}
    \sqrt{4-\sigma_{sp,j}(\xi)(\Delta t)^2} =  \Delta t \,\sqrt{-\frac{1}{2}\sigma_{sp,j}''(0) \xi^2 - \frac{\sigma_{sp,j}^{(4)}(\eta_1)}{24}\xi^4}.
\end{align*}
Substituting \eqref{Taylor sigma} into the expression for group velocity \eqref{the expression for group velocity}, we obtain
\begin{align*}
    v_{g}^{sp,j}(\xi) &= \frac{\sigma_{sp,j}''(0)\xi + \frac{\sigma_{sp,j}^{(4)}(\eta_2)}{6}\xi^3}{\Delta t  \sqrt{-\frac{1}{2}\sigma_{sp,j}''(0)\xi^2 - \frac{\sigma_{sp,j}^{(4)}(\eta_1)}{24}\xi^4} \sqrt{\sigma_{sp,j}(0) + \frac{1}{2}\sigma_{sp,j}''(0)\xi^2 + \frac{\sigma_{sp,j}^{(4)}(\eta_1)}{24}\xi^4}}.
\end{align*}
This indicates that as $\xi \to 0^+$, the group velocity approaches a non-zero constant. A similar argument applies for $\xi \to 0^-$. 

Next, we analyze the limits at $\pm \frac{\pi}{h}$. Due to symmetry, we restrict our analysis to $\xi > 0$. By Lemma~\ref{lemma3.1}, $\mathcal{K}^h(\frac{\pi}{h})$ is positive definite, yielding positive eigenvalues at this boundary. For any mode, the group velocity reads
\begin{equation}\label{fully limit}
    v_{g}^{m}(\xi) = \frac{\sigma_m'(\xi)}{\sqrt{4-\sigma_m(\xi)(\Delta t)^2} \sqrt{\sigma_m(\xi)}},\quad m \in \{ph, sp,1, \dots, sp,k\}.
\end{equation}
We first prove that $\sigma_m'(\frac{\pi}{h}) = 0$. Using the even symmetry $\sigma_m(\xi) = \sigma_m(-\xi)$ and periodicity $\sigma_m(\xi) = \sigma_m(\xi + \frac{2\pi}{h})$, their derivatives satisfy $\sigma_m'(\xi) = -\sigma_m'(-\xi)$ and $\sigma_m'(\xi) = \sigma_m'(\xi + \frac{2\pi}{h})$. Chaining these relations at the boundary yields
\begin{equation*}
    \sigma_m'\left(\frac{\pi}{h}\right) = \sigma_m'\left(-\frac{\pi}{h} + \frac{2\pi}{h}\right) = \sigma_m'\left(-\frac{\pi}{h}\right) = -\sigma_m'\left(\frac{\pi}{h}\right),
\end{equation*}
which implies $\sigma_m'(\frac{\pi}{h}) = 0$. We then distinguish two cases for the denominator term.

\textbf{Case 1:} $4 - \sigma_m(\frac{\pi}{h})(\Delta t)^2 > 0$. Since $\sigma_m(\frac{\pi}{h}) > 0$, the denominator is bounded away from zero at the boundary. Thus, the vanishing numerator implies $v_{g}^{m}(\frac{\pi}{h}) = 0$.

\textbf{Case 2:} $4-\sigma_m(\frac{\pi}{h})(\Delta t)^2=0$. We employ a Taylor expansion around the boundary by setting $\rho = \xi - \frac{\pi}{h}$. The expansions are
\begin{align*}
    \sigma_m'(\xi) &= \sigma_m''(\frac{\pi}{h})\rho + \frac{\sigma_m^{(4)}(\eta_3)}{6}\rho^3, \quad
    \sigma_m(\xi) = \sigma_m(\frac{\pi}{h}) + \frac{\sigma_m''(\frac{\pi}{h})}{2}\rho^2 + \frac{\sigma_m^{(4)}(\eta_4)}{24}\rho^4.
\end{align*}
Substituting these into \eqref{fully limit}, the limit simplifies to
\begin{align*}
     \lim_{\xi \to (\frac{\pi}{h})^-} v^m_{g}(\xi) 
     = \frac{-\sigma_m''(\frac{\pi}{h})}{\Delta t \sqrt{-\frac{1}{2}\sigma_m''(\frac{\pi}{h})} \sqrt{\sigma_m(\frac{\pi}{h})}} \neq 0.
\end{align*}

Finally, for the physical mode with $k=0$ and $\Delta t=h$, the explicit eigenvalue $\sigma(\xi) = \frac{4}{h^2} \sin^2\left(\frac{\xi h}{2}\right)$ yields $\sigma(\frac{\pi}{h}) = \frac{4}{h^2}$ and $\sigma''(\frac{\pi}{h}) = -2$. Thus, we have
\begin{equation*}
    \lim_{\xi \to (\frac{\pi}{h})^-} v_{g}^{ph}(\xi) = \frac{-\sigma''(\frac{\pi}{h})}{h \sqrt{-\frac{1}{2}\sigma''(\frac{\pi}{h})} \sqrt{\sigma(\frac{\pi}{h})}} = \frac{-(-2)}{h\sqrt{\frac{4}{h^2}}} = \frac{2}{h(\frac{2}{h})} = 1.
\end{equation*}
Due to symmetry, the corresponding limits as $\xi \to -\frac{\pi}{h}$ follow identically.
\end{proof}

\subsection{Proof of Theorem~\ref{non_uniform_observability}}\label{subsection 3.2}
Before proving our main Theorem, we first derive the bound for the total energy of the initial data given by \eqref{eq:initial_data_construction}.
\begin{lemma}\label{energy_lower_bound}

For the discrete initial data $(\bm{U}^0, \bm{U}^1)$ constructed in \eqref{eq:initial_data_construction}, there exist positive constants $C_{1}$ and $C_{2}$ independent of $h$ such that
\begin{equation*}
C_{1} \leq \mathcal{E}^{0}_h(\bm{U}^0, \bm{U}^1) \leq C_{2}.
\end{equation*}
\end{lemma}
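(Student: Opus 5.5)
The plan is to compute $\mathcal{E}_h^0$ in Fourier variables via Parseval. Since both data are projected onto the single physical branch $\bm{v}_{ph}(\xi)$, the energy density becomes an explicit scalar multiple of $|\widehat{\bm{U}}^0(\xi)|^2$. Writing $\widehat{\bm{U}}^0(\xi)=a(\xi)\bm{v}_{ph}(\xi)$ with $a(\xi)=r(\xi)h^{\gamma/2}\chi_\rho(\xi)e^{-\sqrt{-1}\xi x_c}$, we have $\widehat{\bm{U}}^1=e^{-\sqrt{-1}\omega_{ph}\Delta t}\widehat{\bm{U}}^0$. By Parseval (as in \eqref{pa}) each term of \eqref{eq:discrete_energy} at $n=0$ becomes
\[
\frac{1}{2\pi h}\int_{\Pi_h}|a(\xi)|^2\,\bm{v}_{ph}^H\bm{S}\bm{v}_{ph}\,\mathrm{d}\xi,\qquad \bm{S}\in\{\bm{M}^h,\mathcal{K}^h(\xi)\},
\]
multiplied by a scalar depending on $\omega_{ph}$. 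We then use $\mathcal{K}^h\bm{v}_{ph}=\sigma_{ph}\bm{M}^h\bm{v}_{ph}$ to write $\bm{v}_{ph}^H\mathcal{K}^h\bm{v}_{ph}=\sigma_{ph}\,\bm{v}_{ph}^H\bm{M}^h\bm{v}_{ph}$.

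With $\theta=\omega_{ph}\Delta t$ and $|e^{-\sqrt{-1}\theta}-1|^2=4\sin^2(\theta/2)=\sigma_{ph}(\Delta t)^2$, the bracket collects to
\[
\tfrac12\sigma_{ph}+\tfrac14\sigma_{ph}+\tfrac14\sigma_{ph}-\tfrac{(\Delta t)^2}{4}\sigma_{ph}^2
=\sigma_{ph}\Bigl(1-\tfrac{\sigma_{ph}(\Delta t)^2}{4}\Bigr).
\]
This uses the identity $\sin^2(\omega\Delta t/2)=\sigma(\Delta t)^2/4$ from the definition of $\omega_{ph}$. The choice of $r(\xi)$ then cancels $\sigma_{ph}\,\bm{v}_{ph}^H\frac1h\bm{M}^h\bm{v}_{ph}$ exactly, leaving
\[
\mathcal{E}_h^0=\frac{h^{\gamma}}{2\pi}\int_{\Pi_h}|\chi_\rho(\xi)|^2\Bigl(1-\frac{\sigma_{ph}(\xi)(\Delta t)^2}{4}\Bigr)\mathrm{d}\xi .
\]
Here $r$ is well defined on the support of $\chi_\rho$ because $\sigma_{ph}>0$ there, since that support lies near $\pi/h$, away from $0$. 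I would double-check the normalization constants of the SDFT, but these only affect the final constants.

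Next comes the substitution $\xi=\xi_c+\rho\eta$ with $\rho=h^{-\gamma}$. This gives $h^{\gamma}\int|\chi_\rho|^2\,\mathrm{d}\xi=\int|\psi(\eta)|^2\,\mathrm{d}\eta=\|\psi\|_{L^2}^2$, which is independent of $h$. Assumption (A3) guarantees that the support stays inside $\Pi_h$, so no periodic wrap-around occurs. Since $1-\sigma_{ph}(\Delta t)^2/4\le 1$, the upper bound follows with $C_2=\|\psi\|_{L^2}^2/(2\pi)$.

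The main obstacle is the lower bound: I need $1-\sigma_{ph}(\xi)(\Delta t)^2/4\ge c>0$ uniformly in $h$. By (A1) this factor is positive, but uniformity in $h$ needs scaling. I would note that $h^2\mathcal{K}^h(\xi)$ and $\bm{M}^h/h$ are $h$-independent functions of $\theta=\xi h$, by the scaling of the monomial basis on cells of width $h$. Hence $h^2\sigma_{ph}(\xi)=\tilde\sigma_{ph}(\xi h)$ for a fixed function $\tilde\sigma_{ph}$, and
\[
\sigma_{ph}(\Delta t)^2/4=\lambda^2\tilde\sigma_{ph}(\theta)/4.
\]
(A1) should be read in this scaled, $h$-independent form, namely $\max_\theta\lambda^2\tilde\sigma_{ph}(\theta)<4$, which yields a constant $c=c(k,\lambda)>0$. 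The lower bound then follows with $C_1=c\|\psi\|_{L^2}^2/(2\pi)$.
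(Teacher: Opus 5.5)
Your proposal is correct and follows essentially the paper's route: Parseval, the eigen-relation $\mathcal{K}^h\bm{v}_{ph}=\sigma_{ph}\bm{M}^h\bm{v}_{ph}$, exact cancellation through $r(\xi)$, and the rescaling $\xi=\xi_c+\rho s$. Your factor $1-\sigma_{ph}(\Delta t)^2/4=\cos^2(\omega_{ph}\Delta t/2)$ is precisely the paper's $\frac{1}{2}\bigl(1+\cos(\omega_{ph}\Delta t)\bigr)$; the paper reaches it after first merging the potential and correction terms into $\frac{1}{2}\operatorname{Re}\langle\mathbb{K}^h\bm{U}^1,\bm{U}^0\rangle$. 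Your scaling argument for the $h$-uniformity of the lower bound supplies what the paper only asserts through $C^2_{s}$, though note that it is $h\,\mathcal{K}^h$ (not $h^2\mathcal{K}^h$) that is an $h$-independent function of $\xi h$, which together with $\bm{M}^h/h$ gives $h^2\sigma_{ph}(\xi)=\tilde\sigma_{ph}(\xi h)$.
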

\begin{proof}
By Parseval's identity, the energy \eqref{eq:discrete_energy} can be expressed in the frequency
\begin{equation*} \label{eq:discrete_fourier_energy}
    \mathcal{E}_h^{0} = \frac{1}{4\pi h} \int_{\Pi_h} \left( \left\langle \bm{M}^h \frac{\widehat{\bm{U}}^{1}-\widehat{\bm{U}}^0}{\Delta t}, \frac{\widehat{\bm{U}}^{1}-\widehat{\bm{U}}^0}{\Delta t} \right\rangle + \text{Re}\left\langle \mathcal{K}^h\widehat{\bm{U}}^{1}, \widehat{\bm{U}}^0 \right\rangle \right)(\xi) \mathrm{d}\xi.
\end{equation*}
Using $\widehat{\bm{U}}^1 = e^{-\sqrt{-1}\omega_{ph}(\xi)\Delta t}\widehat{\bm{U}}^0$, we can simplify the integrand
\begin{align}\label{eq:4.5}
    \mathcal{E}^{0}_h &= \frac{1}{4\pi h} \int_{\Pi_h}\left(\langle \bm{M}^h \frac{\widehat{\bm{U}}^{1}-\widehat{\bm{U}}^0}{\Delta t}, \frac{\widehat{\bm{U}}^{1}-\widehat{\bm{U}}^0}{\Delta t} \rangle + \text{Re}\langle \mathcal{K}^h\widehat{\bm{U}}^{1}, \widehat{\bm{U}}^0 \rangle\right)(\xi)\mathrm{d}\xi \\
    &= \frac{1}{4\pi h} \int_{\Pi_h} \sigma_{ph}(\xi)\left[ 1 + \cos(\omega_{ph}(\xi)\Delta t) \right] \langle \bm{M}^h\widehat{\bm{U}}^{0}, \widehat{\bm{U}}^0 \rangle(\xi) \mathrm{d}\xi.\notag
\end{align}
Under Assumption~\ref{assump:observability}, there exist positive constants $C^1_{s}$ and $C^2_{s}$ such that
\begin{equation} \label{cos_bound}
    C^1_{s} \geq 1 + \cos(\omega_{ph}(\xi)\Delta t) \geq C^2_{s}, \quad \forall \xi \in \Omega_\rho = [\xi_c-\rho, \xi_c+\rho] \subset \Pi_h.
\end{equation}

Next, we evaluate the inner product for the initial data
\begin{align} \label{hatU}
    \langle \bm{M}^h\widehat{\bm{U}}^{0}, \widehat{\bm{U}}^0 \rangle(\xi) = h^\gamma |r(\xi)|^2 |\chi_\rho(\xi)|^2 \langle \bm{M}^h \bm{v}_{ph}(\xi), \bm{v}_{ph}(\xi) \rangle= \frac{h^{1+\gamma}}{\sigma_{ph}(\xi)} |\chi_\rho(\xi)|^2.
\end{align}

Substituting  \eqref{cos_bound} and  \eqref{hatU} into \eqref{eq:4.5}, and noting that the compact support of $\chi_\rho(\xi)$ restricts the integral to $\Omega_\rho$, we can bound the energy from below and above
\begin{align*}
    \frac{C^2_{s}}{4\pi} h^{\gamma} \int_{\Omega_\rho} \left| \psi\left( \frac{\xi - \xi_c}{\rho} \right) \right|^2 \mathrm{d}\xi 
   \leq \mathcal{E}_h^{0} \leq \frac{C^1_{s}}{4\pi} h^{\gamma} \int_{\Omega_\rho} \left| \psi\left( \frac{\xi - \xi_c}{\rho} \right) \right|^2 \mathrm{d}\xi.
\end{align*}

Finally, performing the change of variables $s = (\xi - \xi_c)/\rho$ with $\mathrm{d}\xi = \rho \mathrm{d}s$, and noting that the scaling relationship dictates $\rho = h^{-\gamma}$, we find
\begin{align*}
    \frac{C^2_{s}}{4\pi} \|\psi\|_{L^2([-1,1])}^2 
   \leq \mathcal{E}_h^{0} \leq \frac{C^1_{s}}{4\pi} \|\psi\|_{L^2([-1,1])}^2.
\end{align*}
Setting $C_1 = \frac{C^2_{s}}{4\pi} \|\psi\|_{L^2([-1,1])}^2$ and $C_2 = \frac{C^1_{s}}{4\pi} \|\psi\|_{L^2([-1,1])}^2$ establishes the desired constants, which are positive and independent of $h$.
\end{proof}

    We now establish the exponential decay of the observed energy. For clarity, the proof is divided into two sequential Lemmas.
\begin{lemma}
\label{nonstationary_bound}
For any integer $p \ge 1$, the solution $\bm{u}_j^n$ satisfies
\begin{equation}
    \|\bm{u}_j^n\| \le \frac{\rho}{\pi} h^{-\gamma/2} \widetilde{M}  (p!)^s \left( \frac{\beta}{\rho |x_j|} \right)^p,
\end{equation}
where the positive constants $\widetilde{M}(k, s)$ and $\beta(k, \lambda, s)$ are independent of $h, p$, and $x_j$.
\end{lemma}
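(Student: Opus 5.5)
The plan is to write the solution explicitly in Fourier space and run a non-stationary phase argument by repeated integration by parts in $\xi$. Since $\widehat{\bm{U}}^0$ lies in $\operatorname{span}\{\bm{v}_{ph}(\xi)\}$ and \eqref{fully Fourier mode} decouples, the two-step recursion has characteristic roots $e^{\pm\sqrt{-1}\omega_{ph}(\xi)\Delta t}$. The choice \eqref{eq:initial_u1} selects the single root $e^{-\sqrt{-1}\omega_{ph}\Delta t}$, so that
\[
\widehat{\bm{U}}^n(\xi)=e^{-\sqrt{-1}n\omega_{ph}(\xi)\Delta t}\,r(\xi)h^{\gamma/2}\chi_\rho(\xi)e^{-\sqrt{-1}\xi x_c}\bm{v}_{ph}(\xi).
\]
Applying the inverse SDFT \eqref{fourier transform} then gives
\[
\bm{u}_j^n=\frac{h^{\gamma/2}}{2\pi}\int_{\Omega_\rho}\bm{a}(\xi)\,e^{\sqrt{-1}\phi(\xi)}\,\mathrm{d}\xi ,
\]
with amplitude $\bm{a}=r\chi_\rho\bm{v}_{ph}$ and phase $\phi(\xi)=\xi(x_j-x_c)-n\Delta t\,\omega_{ph}(\xi)$.

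The first step is to control the phase on $\Omega_\rho$. Under (A1), $\omega_{ph}$ is real analytic near $\pi/h$, and by Theorem~\ref{thm:fully_group_velocity_limits} its group velocity vanishes there. Since $\sigma_{ph}$ depends on $\xi h$ through a scaling $\sigma_{ph}(\xi)=h^{-2}\Sigma(\xi h)$ with $\Sigma$ independent of $h$, I expect $|v_g^{ph}(\xi)|\lesssim |\pi/h-\xi|h$ on $\Omega_\rho$. I would choose $\xi_c$ close to $\pi/h$ so that $n\Delta t\,|\omega_{ph}'|\le T|v_g^{ph}|$ stays small compared with $|x_j-x_c|$ for cells $x_j\in\Omega$. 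There $|x_j-x_c|\ge c|x_j|$, so $|\phi'(\xi)|\ge c|x_j|$, and by the same scaling $|\phi^{(m)}(\xi)|\le nh^{m}\lambda h\,C^m m!$. This is where $\lambda$ enters $\beta$. I will write the estimate with $|x_j|$, absorbing $x_c$ into constants.

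The second step is to integrate by parts $p$ times using the operator $L=\frac{1}{\sqrt{-1}\phi'}\partial_\xi$. Since $\chi_\rho$ is compactly supported, there are no boundary terms. This gives
\[
\bm{u}_j^n=\frac{h^{\gamma/2}}{2\pi}\int (L^T)^p\bm{a}\,e^{\sqrt{-1}\phi}\,\mathrm{d}\xi .
\]
To bound $(L^T)^p\bm{a}$, note that each $\xi$-derivative falling on $\chi_\rho$ costs $\rho^{-1}$ and a Gevrey factor: $|\psi^{(m)}|\le M A^m (m!)^s$. Derivatives of $r$, $\bm{v}_{ph}$ and $1/\phi'$ are analytic, with bounds $C^m m!\,h^m$ (scaled by $r\sim h$ and by powers of $|\phi'|^{-1}$). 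Since $h\le \rho^{-1}=h^{\gamma}$, every derivative costs at most $C/\rho$ times a factorial. The analytic factors are dominated by the Gevrey ones because $s>1$.

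The main obstacle is the combinatorics of $(L^T)^p$: the terms must collapse to $(p!)^s\,(\beta/(\rho|x_j|))^p$ rather than carrying an extra $p!$. I would handle this with a Faà di Bruno / Cauchy-majorant argument. Write $1/\phi'=|x_j|^{-1}g$ with $g$ analytic in a complex strip of width about $\rho$ (as a function of $(\xi-\xi_c)/\rho$), so that each application of $L^T$ acts on functions of Gevrey class $s$ in the rescaled variable $\eta=(\xi-\xi_c)/\rho$. Gevrey-$s$ classes are stable under multiplication by analytic functions and under differentiation, with constants of the form $C^p(p!)^s$. This yields
\[
\|(L^T)^p\bm{a}\|\le \widetilde{M}\,h^{-\gamma}\,(p!)^s\left(\frac{\beta}{\rho|x_j|}\right)^p\cdot h^{\gamma}\ \text{(normalization of } r,\bm{v}_{ph}\text{)}.
\]
Finally, integrating over $\Omega_\rho$, whose length is $2\rho$, and tracking the powers $h^{\gamma/2}$ together with the normalization $r\|\bm{v}_{ph}\|$ should give the stated factor $\frac{\rho}{\pi}h^{-\gamma/2}\widetilde{M}$. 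The constants $\widetilde{M}$ and $\beta$ depend only on $\psi$, $s$, $k$ through $\Sigma$, and $\lambda$.
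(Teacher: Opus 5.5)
Your proposal is correct and follows essentially the paper's route: the single-root Fourier representation, the lower bound $|\phi'|\gtrsim|x_j|$ on $\Omega_\rho$ from the vanishing physical group velocity near $\pi/h$, $p$-fold integration by parts with the transpose operator, Cauchy estimates for $r\bm{v}_{ph}$ and $1/\phi'$ via the scaling $\eta=\xi h$, Gevrey bounds for $\chi_\rho$, and the absorption $h\le\rho^{-1}$. The differences are minor: you do the combinatorics in the rescaled variable $\zeta=(\xi-\xi_c)/\rho$ via Gevrey-class stability (a short induction gives $|\partial_\zeta^m(\partial_\zeta\circ g)^q f|\le CK^qA^{m+q}((m+q)!)^s$ for $f\in G^s$ and analytic $g$) rather than through the paper's coefficient majorant $D_{p,m}^*$ built from $\partial_\xi^p(g^p\,\cdot)$, and you keep $-\xi x_c$ in the phase, which the paper drops, at the harmless cost of $\beta$ depending on $1-|x_c|$.
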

\begin{proof}
     The proof is organized into the following four steps:

     \textbf{Step 1: Temporal evolution of the solution.} Due to the modal purity of the initial data \eqref{eq:initial_data_construction} and the linearity of the time-stepping scheme, the temporal evolution of the wave packet is given by
\begin{equation} \label{time_evolution}
    \widehat{\bm{U}}^n(\xi) = e^{-\sqrt{-1} \omega_{ph}(\xi) t_n} \widehat{\bm{U}}^0(\xi), \quad \forall n \geq 0.
\end{equation}
Using the Inverse SDFT, the solution in the physical space at $I_j$ and time $t_n$ is
\begin{equation*}
    \bm{u}_j^n = \frac{1}{2\pi} \int_{-\pi/h}^{\pi/h} \widehat{\bm{U}}^n(\xi) e^{\sqrt{-1}\xi x_j } \, \mathrm{d}\xi = \frac{1}{2\pi} \int_{\Omega_\rho} \mathcal{A}(\xi) e^{\sqrt{-1}\Phi_j^n(\xi)} \, \mathrm{d}\xi,
\end{equation*}
where $ \mathcal{A}(\xi) = h^{-\gamma/2} r(\xi) \chi_\varepsilon(\xi) \bm{v}_{ph}(\xi)$ and $ \Phi_j^n(\xi) = x_j \xi - \omega_{ph}(\xi)t_n$.

\textbf{Step 2: Non-stationary phase setup.}
We analyze the derivative $\partial_\xi \Phi_j^n(\xi) = x_j - v^{ph}_g(\xi)t_n$.
The wave packet is concentrated near the critical frequency $\frac{\pi}{h}$, where the group velocity vanishes.  By continuity, for sufficiently small $h$, $|v^{ph}_g(\xi)|$ remains arbitrarily small in $\Omega_\rho$. Thus, for any $t_n \in [0, T]$
\begin{equation*}
    |\partial_\xi \Phi_j^n(\xi)| \geq |x_j| - |v^{ph}_g(\xi)|T \geq |x_j| - \delta_1 > \frac{|x_j|}{2} >\frac{1}{2} >0.
\end{equation*}

We define the differential operator $\mathcal{L}$ and its adjoint $\mathcal{L}^*$ acting on a function $f$ as
\begin{equation*}
    \mathcal{L} f = \frac{1}{\sqrt{-1} \partial_\xi\Phi_j^n(\xi)} \partial_\xi f, \qquad \text{and} \qquad \mathcal{L}^* f = - \partial_\xi \left( \frac{f}{\sqrt{-1} \partial_\xi \Phi_j^n(\xi) }\right).
\end{equation*}
Observe that $\mathcal{L}$ acts as the inverse of the derivative on the oscillatory term
\begin{equation*}
    \frac{1}{\sqrt{-1} \partial_\xi \Phi_j^n(\xi)} \partial_\xi \left( e^{\sqrt{-1} \Phi_j^n(\xi)} \right) = \frac{1}{\sqrt{-1} \partial_\xi \Phi_j^n(\xi)}  \sqrt{-1} \partial_\xi \Phi_j^n(\xi) e^{\sqrt{-1} \Phi_j^n(\xi)} = e^{\sqrt{-1} \Phi_j^n(\xi)}.
\end{equation*}
Substituting this identity into the integral and integrating by parts, we obtain
\begin{align}
    \int_{\Omega_\rho} \mathcal{A}(\xi) e^{\sqrt{-1} \Phi_j^n(\xi)} \, \mathrm{d}\xi =- \int_{\Omega_\rho} \partial_\xi \left( \frac{\mathcal{A}(\xi)}{\sqrt{-1} \partial_\xi \Phi_j^n(\xi)} \right) e^{\sqrt{-1} \Phi_j^n(\xi)} \, \mathrm{d}\xi.
\end{align}
Iterating this $p$ times yields the integral representation involving the adjoint operator
\begin{equation} \label{eq:ibp_k}
    \bm{u}_j^n = \frac{1}{2\pi} \int_{\Omega_\rho} e^{\sqrt{-1} \Phi_j^n(\xi)} (\mathcal{L}^*)^p [\mathcal{A}(\xi)] \, \mathrm{d}\xi.
\end{equation}

\textbf{Step 3: Bounding the $p$-th power of the adjoint operator.} The integrand is bounded by
\begin{equation}\label{mathcal{L}^*}
    \|(\mathcal{L}^*)^p [\mathcal{A}]\| \le \sum_{m=0}^p D_{p,m}^*(\xi) \cdot \left\|\partial_\xi^m\mathcal{A}\right\|, 
\end{equation}
where the coefficients $D_{p,m}^*(\xi)$ (defined below) are positive constants arising from the adjoint operator, and we define $g_{j,n}=\frac{1}{\partial_\xi \Phi_j^n}$.

To prove \eqref{mathcal{L}^*}, we omit the unimodular factor $\sqrt{-1}$ and define the operator $L_0 = I$, $L_{p+1} u = \partial_\xi (g_{j,n} L_p u)$, with the expansion $L_p = \sum_{m=0}^p c_{p,m}(\xi) \partial_\xi^m$.
Analogously, we expand the operator $R_p u = \partial_\xi^p (g_{j,n}^p u)$ as $R_p = \sum_{m=0}^p D_{p,m}(\xi) \partial_\xi^m$.
Let $D_{p,m}^*(\xi)$ be the structural majorant associated with $D_{p,m}(\xi)$. It is constructed by expanding $D_{p,m}$ into a sum of monomial terms via the Leibniz rule, and replacing every derivative factor $\partial_\xi^i g_{j,n}$ with its absolute value $\left| \partial_\xi^i g_{j,n} \right|$.

Then we prove \eqref{mathcal{L}^*} by induction on $p$. For any derivative order $r \ge 0$, we claim
\begin{equation} \label{eq:strong_induction}
    \left| \partial_\xi^r c_{p,m}(\xi) \right| \le \partial_\xi^r D_{p,m}^*(\xi),
\end{equation}
where the derivative on the $D_{p,m}^*(\xi)$ is formal.

The base case $p=1$ holds with $c_{1,0} = g_{j,n}^\prime$, $c_{1,1} = g_{j,n}$, and $D_{1,i}^* = |c_{1,i}|$. For the inductive step $p \to p+1$, the coefficient relations are

\textbf{(i) For $c_{p+1,m}$:}
Applying the definition $L_{p+1}[\mathcal{A}] = \partial_\xi (g_{j,n} L_p [\mathcal{A}])$, we have
\begin{equation*}
    L_{p+1}[\mathcal{A}] = \sum_{m=0}^p \left[ (g_{j,n} c_{p,m})' \mathcal{A}^{(m)} + g_{j,n} c_{p,m} \mathcal{A}^{(m+1)} \right].
\end{equation*}
Collecting the terms for $\mathcal{A}^{(m)}$, we obtain the relation for the coefficients
\begin{equation} \label{eq:c_recurrence}
    c_{p+1, m} = g_{j,n} c_{p, m-1} + g_{j,n}' c_{p, m} + g_{j,n} c_{p, m}'.
\end{equation}

\textbf{(ii) For $D_{p+1,m}$:}
By definition, $D_{p+1,m} = \binom{p+1}{m} \partial_\xi^{p+1-m} (g_{j,n}^{p+1})$. Writing $g_{j,n}^{p+1} = g_{j,n} \cdot g_{j,n}^p$ and applying the Leibniz rule, we obtain
\begin{equation*}
    D_{p+1,m} = \binom{p+1}{m} \left[ g_{j,n} (g_{j,n}^p)^{(p+1-m)} + (p+1-m) g_{j,n}' (g_{j,n}^p)^{(p-m)} + \mathcal{R} \right].
\end{equation*}
Using identity $\binom{p+1}{m} = \binom{p}{m-1} + \binom{p}{m}$ and $(p+1-m)\binom{p+1}{m} = (p+1)\binom{p}{m}$, we can reconstruct the lower-order terms
\begin{align*}
    D_{p+1,m} =\;& g_{j,n} \left[ \binom{p}{m-1} (g_{j,n}^p)^{(p-m+1)} + \binom{p}{m} (g_{j,n}^p)^{(p-m+1)} \right] \\
    & + (p+1) g_{j,n}' \left[ \binom{p}{m} (g_{j,n}^p)^{(p-m)} \right] + \binom{p+1}{m} \mathcal{R} \\
    =\;& g_{j,n} D_{p,m-1} + g_{j,n} D_{p,m}' + (p+1) g_{j,n}' D_{p,m} + \widetilde{\mathcal{R}}.
\end{align*}

\textbf{(iii) For $r$-th derivatives:}
To prove the  induction step for any $r \ge 0$, we apply the generalized Leibniz rule to the  relation \eqref{eq:c_recurrence}
\begin{equation*}
    c_{p+1, m}^{(r)} = \sum_{i=0}^r \binom{r}{i} \left[ g_{j,n}^{(i)} c_{p, m-1}^{(r-i)} + g_{j,n}^{(i+1)} c_{p, m}^{(r-i)} + g_{j,n}^{(i)} c_{p, m}^{(r-i+1)} \right].
\end{equation*}
Taking the absolute value and applying the triangle inequality, we have
\begin{equation} \label{eq:c_r_bound}
    |c_{p+1, m}^{(r)}| \le \sum_{i=0}^r \binom{r}{i} \left[ |g_{j,n}^{(i)}| |c_{p, m-1}^{(r-i)}| + |g_{j,n}^{(i+1)}| |c_{p, m}^{(r-i)}| + |g_{j,n}^{(i)}| |c_{p, m}^{(r-i+1)}| \right].
\end{equation}
Similarly, applying the formal $r$-th derivative to  $D_{p+1,m}^*$, and subsequently discarding the non-negative remainder terms, we obtain
\begin{align*}\label{eq:D_r_bound}
    &(D_{p+1, m}^*)^{(r)}\\ 
    &\ge \sum_{i=0}^r \binom{r}{i} \left[ |g_{j,n}^{(i)}| (D_{p, m-1}^*)^{(r-i)} + (p+1) |g_{j,n}^{(i+1)}| (D_{p, m}^*)^{(r-i)} + |g_{j,n}^{(i)}| (D_{p, m}^*)^{(r-i+1)} \right].
\end{align*}
By the induction hypothesis \eqref{eq:strong_induction}, we have $|c_{p, \cdot}^{(s)}| \le (D_{p, \cdot}^*)^{(s)}$ for all $s \ge 0$. Substituting this into the absolute value bound \eqref{eq:c_r_bound} establishes \eqref{eq:strong_induction}.

\textbf{Step 4: Cauchy and Gevrey estimates.}
To bound $D_{p,m}^*(\xi)$, we introduce the variable $\eta = \xi h$. Writing $\sigma_{ph}(\xi) = h^{-2}\widetilde{\sigma}_{ph}(\eta)$ yields the scalable dispersion relation $\omega_{ph}(\xi) = h^{-1}\widetilde{\omega}(\eta)$, where $\widetilde{\omega}(\eta) = \mathrm{sign}(\eta)\frac{2}{\lambda}\arcsin(\frac{\lambda}{2}\sqrt{\widetilde{\sigma}_{ph}(\eta)})$. Under Assumption~\ref{assump:observability}, $\widetilde{\omega}(\eta)$ is real analytic with a convergence radius $r_0 > 0$, which depends on $k$ and $\lambda$ but is independent of $h$. Recalling $x_j = jh$ and $t_n = n\Delta t = n\lambda h$, the physical group velocity satisfies $v_g^{ph}(\xi) = \omega_{ph}'(\xi) = \widetilde{\omega}'(\eta)$. Thus, we have
\[g_{j,n}(\xi) = \frac{1}{\sqrt{-1}(jh - \widetilde{\omega}'(\eta)n\lambda h)} = \frac{1}{h} \widetilde{g}_{j,n}(\eta), \quad \text{where} \quad \widetilde{g}_{j,n}(\eta) = \frac{1}{\sqrt{-1}(j - \widetilde{\omega}'(\eta)n\lambda)}.\]

Because $\widetilde{g}_{j,n}$ extends holomorphically to $D_{r_0}$ with the bound $\sup_{z \in D_{r_0}} |\widetilde{g}_{j,n}(z)| \le (c|j|)^{-1}$, applying the chain rule and Cauchy estimates, we obtain
\begin{equation*}
    \left| \partial_\xi^i g_{j,n}(\xi) \right| = \left| \frac{1}{h} \cdot h^i \cdot \partial_\eta^i \widetilde{g}_{j,n}(\eta) \right| \leq h^{i-1} \left( i! \cdot r_0^{-i} \frac{1}{c |j|} \right) = i! \cdot \left(\frac{r_0}{h}\right)^{-i} \left( \frac{1}{c |x_j|} \right).
\end{equation*}

To handle $D_{p,m}^*(\xi)$, we introduce  $M(z) = M_0 (1 - \left(\frac{r_0}{h}\right)^{-1} z)^{-1}$, where $M_0 = (c|x_j|)^{-1}$, which satisfies $M^{(i)}(0) = i! \left(\frac{r_0}{h}\right)^{-i} M_0 \ge |g_{j,n}^{(i)}(\xi)|$.
By definition, $D_{p,m}^*$ is bounded by the derivative of $M(z)^p$ at $z=0$
\begin{equation}\label{estimation of D_star}
    D_{p,m}^*(\xi)\! \le\! \binom{p}{m} \partial_z^{p-m} \big( M(z)^p \big) \big|_{z=0} \!\le\! \binom{p}{m} \frac{(2p-m-1)!}{(p-1)!} \left(\frac{r_0}{h}\right)^{m-p} \!\left( \frac{1}{c |x_j|} \right)^p.
\end{equation}

To estimate $\partial_\xi^{m} \mathcal{A}$, scaling $\mathcal{S}(\xi) = r(\xi) \bm{v}_{ph}(\xi) = h\widetilde{\mathcal{S}}(\eta)$, where $\widetilde{\mathcal{S}}$ is analytic with convergence radius $R>0$, allows us to apply the chain rule and Cauchy estimates
\begin{align} \label{eq:bound_S}
    \left\|\partial_\xi^i \mathcal{S}(\xi)\right\| 
    = \left\| h^{i+1} \cdot \partial_\eta^i \widetilde{\mathcal{S}}(\eta) \right\| \le h^i  \left( C_S  i!  \frac{1}{R^i} \right)=  C_S  i!  \left( \frac{h}{R} \right)^i,
\end{align}
where $R$ and $C_S$ are positive constants depending on $k$.
Since $\chi_\rho \in G^s$ ($s>1$), there exist constants $C_\chi, B_\chi > 0$ depending only on $s$ such that for all $i \ge 0$, we have the Gevrey estimate
\begin{equation} \label{eq:bound_chi}
    |\partial_\xi^i \chi_\rho(\xi)| \le C_\chi  (i!)^s  \left( \frac{1}{B_\chi \rho} \right)^i.
\end{equation}

We now derive the bound for the $m$-th derivative of $\mathcal{A}(\xi)$. Applying the Leibniz Rule to the product $\mathcal{S}(\xi) \chi_\rho(\xi)$, we have
    \begin{equation}\label{sum A}
        \partial_\xi^m \mathcal{A}(\xi) = h^{-\gamma/2} \sum_{i=0}^m \binom{m}{i} \left( \partial_\xi^i\mathcal{S}(\xi) \right) \left( \partial_\xi^{m-i}\chi_\rho(\xi) \right).
    \end{equation}
    Substituting \eqref{eq:bound_S} and \eqref{eq:bound_chi} into \eqref{sum A}, we obtain
\begin{equation}\label{estimation of A}
    \|\partial_\xi^m \mathcal{A}(\xi)\| \le h^{-\gamma/2} M_1 (m!)^s \left(\frac{1}{B_\chi \rho}\right)^m.
\end{equation}
Here, we defined $M_1 = 2C_S C_\chi$ by assuming $h$ is small enough such that $\frac{h B_\chi \rho}{R}  \le \frac{1}{2}$.
Substituting these bounds \eqref{estimation of A} and \eqref{estimation of D_star} into the estimation \eqref{mathcal{L}^*}, we have 
\begin{align}\label{summation L}
    &\|(\mathcal{L}^*)^p [\mathcal{A}(\xi)]\|  \le \sum_{m=0}^{p} D_{p,m}^*(\xi)  \left\| \partial_\xi^m\mathcal{A}(\xi) \right\|\notag\\
    &\le \sum_{m=0}^{p} \left[ \binom{p}{m} \frac{(2p-m-1)!}{(p-1)!} \left(\frac{r_0}{h}\right)^{m-p} \left(\frac{1}{c|x_j|}\right)^p \right] \left[ h^{-\frac{\gamma}{2}} M_1 (m!)^s \left(\frac{1}{B_\chi \rho}\right)^m \right].
\end{align}
Utilizing $\binom{p}{m}\frac{1}{(p-1)!} = \frac{p}{m!(p-m)!}$, we bound the combinatorial factors as
\[\binom{p}{m} \frac{(2p-m-1)!}{(p-1)!}(m!)^s=p \frac{(2p-m-1)!}{(p-m)!} (m!)^{s-1} \le p  \frac{(2p)!}{p!}  (p!)^{s-1} = p\, 4^p (p!)^s.\]
Substituting this bound and  $M_1 = 2C_{S}C_{\chi}$ back into the summation \eqref{summation L}, we get
\begin{equation*}
    \|(\mathcal{L}^{*})^{p}[\mathcal{A}(\xi)]\| \le h^{-\gamma/2} (2 C_{S}C_{\chi}) p\, 4^{p} (p!)^{s} \left(\frac{1}{c|x_{j}|}\right)^{p} \sum_{m=0}^{p} \left(\frac{r_0}{h}\right)^{-(p-m)} \left(\frac{1}{B_{\chi}\rho}\right)^{m}.
\end{equation*}
Defining $\nu = \max(r_0^{-1}, B_{\chi}^{-1})$, the condition $\rho \ge 1$ bounds the remaining summation by $(p+1)(\nu/\rho)^p$. Absorbing this polynomial growth via $p(p+1) \le C_{\tilde{\delta}}(1+\tilde{\delta})^p$ for any $\tilde{\delta} > 0$, and setting $\beta = 4(1+\widetilde{\delta})\nu/c$ along with $\widetilde{M} = 2 C_S C_\chi C_{\widetilde{\delta}}$, yields
\begin{equation*}
    \|(\mathcal{L}^{*})^{p}[\mathcal{A}(\xi)]\| \le h^{-\gamma/2}\widetilde{M}(p!)^{s}\left(\frac{\beta}{\rho|x_{j}|}\right)^{p},
\end{equation*}
where the positive constants $\widetilde{M}(k, s)$ and $\beta(k, \lambda, s)$ are independent of $h, p$, and $x_j$. Noting that the integration length is $|\Omega_\rho| = 2\rho$, we obtain the bound
\begin{equation}\label{estimation of u}
    \|\bm{u}_j^n\| \le \frac{\rho}{\pi} h^{-\gamma/2} \widetilde{M}  (p!)^s \left( \frac{\beta}{\rho |x_j|} \right)^p.
\end{equation}
\end{proof}

\begin{lemma}
\label{lem:obs_decay}
For any fixed $T>0$ and any arbitrarily small $\varepsilon > 0$, the observed energy satisfies
\begin{equation*}\label{exponentially decaying energy}
\Delta t \sum_{n=0}^{N-1} \mathcal{E}_{h,\Omega}^{n} \leq C_3(k, s, T) \exp\left(-\frac{C_4(k, \lambda, s)}{h^{1-\varepsilon}}\right),
\end{equation*}
where the positive constants $C_3(k, s, T)$ and $C_4(k, \lambda, s)$ are independent of $h$.
\end{lemma}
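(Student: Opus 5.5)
Starting from the pointwise bound of Lemma~\ref{nonstationary_bound}, I will first optimize over the integer $p$ for each fixed cell $j$ with $x_j\in\Omega$, i.e.\ $|x_j|>1$. Set $a_j:=\rho|x_j|/\beta$. Since $(p!)^s a_j^{-p}\le (p^{p}a_j^{-1})^{p}\cdots$, more precisely $(p!)^s\le p^{sp}$, choose $p=\lfloor (a_j/e)^{1/s}\rfloor$. Then $(p^s/a_j)^p\le e^{-sp}$, and we obtain
\begin{equation*}
\|\bm{u}_j^n\|\le \frac{\rho}{\pi}h^{-\gamma/2}\widetilde{M}\,e^{s}\exp\!\Big(-s\big(\tfrac{\rho|x_j|}{e\beta}\big)^{1/s}\Big),
\end{equation*}
uniformly in $n$ with $t_n\le T$. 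Note that the requirement $|\partial_\xi\Phi_j^n|\ge |x_j|/2$ from Step~2 holds for every such $j$ and every $n\le N$, so the constant is uniform. With $\rho=h^{-\gamma}$ the exponent is $-s(e\beta)^{-1/s}|x_j|^{1/s}h^{-\gamma/s}$. Given $\varepsilon>0$, I choose $\gamma\in(0,1)$ and $s>1$ with $\gamma/s\ge 1-\varepsilon$, for instance $\gamma=1-\varepsilon/2$ and $s=(1-\varepsilon/2)/(1-\varepsilon)$. This is where the constant $C_4$ acquires its dependence on $(k,\lambda,s)$ through $\beta$.

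Next I convert the coefficient bounds into a bound on $\mathcal{E}_{h,\Omega}^n$. The energy \eqref{eq:discrete_energy} restricted to $\Omega$ is a sum over $j$ with $x_j\in\Omega$ of quadratic forms in $\bm{u}_j^{n},\bm{u}_j^{n+1}$ and the neighbouring $\bm{u}_{j\pm1}$ (coming from $\mathbb{K}^h$), divided by $(\Delta t)^2=\lambda^2h^2$ at worst. The local blocks satisfy $\|\bm{M}^h\|\lesssim h$ and $\|\bm{K}_{0,\pm1}^h\|\lesssim h^{-1}$. Hence
\begin{equation*}
\mathcal{E}_{h,\Omega}^n\le C(k,\lambda)\,h^{-1}\!\!\sum_{|x_j|\ge 1-h}\big(\|\bm{u}_j^n\|^2+\|\bm{u}_j^{n+1}\|^2\big).
\end{equation*}
The neighbour shift only changes $|x_j|\ge1$ to $|x_j|\ge 1-h\ge 1/2$, which is harmless for the Step~2 lower bound as long as $x_c$ stays away from the endpoints; I would take $\Omega$ as given and absorb this into constants.

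Inserting the pointwise bound gives a sum over $j$ of $\exp(-2c_*|jh|^{1/s}h^{-\gamma/s})$, where $c_*=s(e\beta)^{-1/s}$. I compare this sum with an integral, using $|x|^{1/s}\ge \tfrac12(1+|x|^{1/s})$ for $|x|\ge 1/2$ up to constants. This bounds the sum by $h^{-1}e^{-c_*h^{-\gamma/s}}$ times a convergent factor $\sum_j e^{-c_*|jh|^{1/s}}$, which is itself $O(h^{-1})$. Summing over $n\le N=T/(\lambda h)$ and multiplying by $\Delta t$ yields
\begin{equation*}
\Delta t\sum_{n=0}^{N-1}\mathcal{E}_{h,\Omega}^n\le C\,T\,h^{-a}\exp\!\big(-c_*h^{-(1-\varepsilon)}\big)
\end{equation*}
for some fixed power $a$ collecting $\rho^2h^{-\gamma}$, $h^{-1}$ and the other polynomial factors. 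I then absorb $h^{-a}$ by halving the exponent constant: $h^{-a}e^{-c_*h^{-(1-\varepsilon)}}\le C'e^{-(c_*/2)h^{-(1-\varepsilon)}}$. This gives $C_3$ and $C_4=c_*/2$.

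The main obstacle is the $p$-optimization. The choice of $p$ must be an integer at least $1$, which needs $a_j\ge e$; this holds for small $h$ since $\rho\to\infty$. Beyond that, the real danger is the bookkeeping of powers of $h$ and $\rho$, together with the requirement that the constants in Lemma~\ref{nonstationary_bound} be uniform in $n\le N$ and $j$. I would double-check that the $\delta_1$ smallness in Step~2 is uniform over the whole time window $[0,T]$.
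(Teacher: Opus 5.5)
Your argument is correct and is essentially the paper's proof. Like the paper, you insert the bound of Lemma~\ref{nonstationary_bound} with $p$ at the Gevrey-optimal scale $(\rho/\beta)^{1/s}$, pass to $\mathcal{E}^n_{h,\Omega}$ via $\|\bm{M}^h\|\lesssim h$ and $\|\bm{K}_0^h\|,\|\bm{K}_{\pm1}^h\|\lesssim h^{-1}$, sum over cells and time steps, absorb the polynomial prefactors into the exponential, and take $\gamma/s=1-\varepsilon$.

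The differences are minor. You choose $p$ cell by cell, giving decay $\exp(-c|x_j|^{1/s}h^{-\gamma/s})$, whereas the paper fixes one $p$ for all $j$ and sums the leftover factor $|jh|^{-2p}$. You also handle the neighbour coupling and bound the correction term in absolute value instead of discarding it as non-positive; this is safer, since the restricted form need not have a sign. Finally, from $(p!)^s\le p^{sp}$ and $p^s\le a_j/e$ you only get $e^{-p}$, not $e^{-sp}$, but this changes nothing beyond the value of $C_4$.
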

\begin{proof}
Let $p = \lfloor ( \rho / (e^s \beta \delta) )^{1/s} \rfloor$, where $\delta > 1$. Substituting this $p$ into the estimate \eqref{estimation of u} and using Stirling's approximation $(p!)^s \le C p^{s/2} (p^s/e^s)^p$, we get
\begin{equation*}
\begin{aligned}
    \|\bm{u}_j^n\| \le \frac{\rho}{\pi} h^{-\gamma/2} \widetilde{M}  C p^{s/2} \left( \frac{p^s}{e^s} \right)^p \left( \frac{\beta}{\rho |x_j|} \right)^p \leq \widetilde{C}  h^{-3\gamma/2}  p^{s/2} \left( \frac{p^s \beta}{e^s \rho |x_j|} \right)^p.
\end{aligned}
\end{equation*}
Using the upper bound $p^s \le \frac{\rho}{e^s \beta \delta}$, the base of the geometric term simplifies
\begin{equation*}
    \frac{p^s \cdot \beta}{e^s \rho |x_j|} \le \frac{\rho}{e^s \beta \delta} \cdot \frac{\beta}{e^s \rho |x_j|} = \frac{1}{e^{2s} \delta |x_j|}.
\end{equation*}
Defining $\rho_1 = e^{2s} \delta > 2$ ensures spatial decay, and for small $h$, we have $p^{s/2} \le \rho_1^{p/2}$. Substituting $\rho = h^{-\gamma}$ and absorbing the algebraic term $h^{-3\gamma/2}$ into the exponential, we obtain
\begin{equation}
    \|\bm{u}_j^n\|
    \leq \frac{ \widetilde{C}  \exp\left( \frac{\ln \rho_1}{2} \right)}{|x_j|^p} h^{\frac{-3\gamma}{2}} \exp\left( - \frac{\rho^{1/s}\ln \rho_1}{2(e^s \beta \delta)^{1/s}}  \right) 
    \le C |x_j|^{-p} \exp\left( - \frac{c}{2} \, h^{-\gamma/s} \right).
\end{equation}

We estimate the total discrete energy restricted to the observation region
\begin{equation*}
    \mathcal{E}_{h,\Omega}^{n} = \frac{1}{2} \left\| \frac{\bm{U}^{n+1} - \bm{U}^n}{\Delta t} \right\|_{\mathbb{M}^h, \Omega}^2 + \frac{\|\bm{U}^{n+1}\|_{\mathbb{K}^h, \Omega}^2}{4} + \frac{\|\bm{U}^n\|_{\mathbb{K}^h, \Omega}^2}{4} - \frac{(\Delta t)^2}{4} \left\| \frac{\bm{U}^{n+1} - \bm{U}^n}{\Delta t} \right\|_{\mathbb{K}^h, \Omega}^2.
\end{equation*}
By omitting the non-positive term, employing  $\|\bm{U}\|_{\mathbb{K}^h}^2 \le C h^{-2} \|\bm{U}\|_{\mathbb{M}^h}^2$, and utilizing  $\|\bm{U}\|_{\mathbb{M}^h, \Omega}^2 \le C h \sum_{x_j \in \Omega} \|\bm{u}_j\|^2$, combined with the CFL condition, we can bound the discrete energy by the $\ell^2$-norm of the coefficient vectors
\begin{equation} \label{eq:energy_bound_inv}
    \mathcal{E}_{h,\Omega}^{n} \le C_{k} \, h^{-1} \sum_{x_j \in \Omega} (\|\bm{u}^{n+1}_j\|^2+\|\bm{u}^n_j\|^2).
\end{equation}
Computing the summation over the observation region 
\begin{align}\label{3171}
    \sum_{x_j \in \Omega} (\|\bm{u}^{n+1}_j\|^2+\|\bm{u}^n_j\|^2) &\le  2C^2 \exp\left( - c \, h^{-\gamma/s} \right) \sum_{j: |jh| \ge 1} \frac{1}{|jh|^{2p}}.
\end{align}
Bounding the discrete summation by its integral counterpart for $p \ge 1$ yields
\begin{equation}\label{3172}
    \sum_{|jh|\ge 1}\frac{1}{|jh|^{2p}} 
    \le 2 \left( \frac{1}{1^{2p}} + \int_{1/h}^{\infty} \frac{1}{(xh)^{2p}} dx \right) 
    = 2 \left( 1 + \frac{1}{h(2p-1)} \right).
\end{equation}
For sufficiently small $h$, the summation is bounded by $4/h$.

Substituting those bounds \eqref{3171} and \eqref{3172} back into \eqref{eq:energy_bound_inv}, we have
\begin{align*}
\mathcal{E}_{h,\Omega}^{n} \leq C_{inv} h^{-1} \cdot \left( 2C^2 \exp\left( - c \, h^{-\gamma/s} \right) \frac{4}{h}\right) = \widetilde{C}  h^{-2}  \exp\left( - c \, h^{-\gamma/s} \right).
\end{align*}
Summing over the discrete time steps $n=0, \dots, N-1$, we obtain the estimate for the total energy in the observation region
\begin{equation*}
    \Delta t \sum_{n=0}^{N-1} \mathcal{E}_{h,\Omega}^{n}\leq T\widetilde{C}  \, \exp\left( - \frac{c}{2} \, h^{-\gamma/s} \right):=C_3(k, s, T) \exp\left( - \frac{C_4(k, \lambda, s)}{h^{\gamma/s}} \right).
\end{equation*}
Since $\gamma \in (0,1)$ and $s > 1$ are chosen freely, for any small $\varepsilon > 0$, setting $s$ sufficiently close to $1$ and $\gamma = s(1-\varepsilon)$ yields $\gamma/s = 1-\varepsilon$, completing the proof.
    \end{proof}
    With Lemma~\ref{energy_lower_bound} and Lemma~\ref{lem:obs_decay} now established, the proof of Theorem~\ref{non_uniform_observability} follows naturally and is therefore omitted.

\subsection{Proof of Theorem \ref{theorem restore}}\label{proof of 2.4}

Before proving Theorem~\ref{theorem restore}, we first give the proof of Lemma~\ref{lem:local_positivity}.
\begin{proof}
Following Step 4 of Lemma~\ref{lem:obs_decay},the scaling $\eta = \xi h$ gives $v_g^{ph}(\xi) = \widetilde{\omega}'(\eta)$. Since $\widetilde{\omega}'(0) = 1$, continuity guarantees a radius $\eta_k > 0$ such that $\widetilde{\omega}'(\eta) > 0$ for $|\eta| \le \eta_k$, which implies $v_g^{ph}(\xi) > 0$ for all $|\xi| \le \eta_k/h$.
\end{proof}

Having established that the physical group velocity possesses a positive interval, we now proceed to prove the Theorem~\ref{theorem restore}.
\begin{proof}
    Lemma~\ref{invariant} ensures energy conservation, allowing the accumulated energy over $N\Delta t$ to be split into the observation domain $\Omega$ and the blind zone $I=[-1,1]$
\begin{equation}\label{eq:energy_split}
    N \Delta t \mathcal{E}_{h}^{0} = \Delta t \sum_{n=0}^{N-1} \mathcal{E}_{h,\Omega}^{n} + \Delta t \sum_{n=0}^{N-1} \mathcal{E}_{h,I}^{n}.
\end{equation}
For $\delta_k = 1 - \frac{\eta_k}{\pi} \in (0,1)$, we define the filtered discrete space $\mathcal{V}_h^{\delta_k}$ as
\begin{equation*}
    \mathcal{V}_h^{\delta_k} = \left\{ \bm{U} \in \ell^2(\mathbb{Z}; \mathbb{C}^{k+1}) \ \Big| \ \operatorname{supp}(\widehat{\bm{U}}(\xi)) \subset I_{\delta_k}, \ \widehat{\bm{U}}(\xi) \in \text{span}\{\bm{v}_{ph}(\xi)\} \right\},
\end{equation*}
where $I_{\delta_k} = \big[-\frac{(1-\delta_k)\pi}{h}, \frac{(1-\delta_k)\pi}{h}\big]$ is the filtered frequency band.

To bound $\mathcal{E}_{h,I}^{n}$ independently of $h$, we introduce a  cutoff $\Phi \in C_c^\infty(\mathbb{R})$ with $\Phi \ge 1$ on $[-1, 1]$ and $\operatorname{supp}(\Phi) \subset [-2, 2]$. Sampling $\varphi_j = \Phi(jh)$ assembles the block-diagonal matrix $\bm{W}_\varphi = \operatorname{diag}(\varphi_j \bm{I}_{k+1})$, bounding $\mathcal{E}_{h,I}^{n}$ by the globally weighted form
\begin{equation}\label{weighted inner product}
\mathcal{E}_{h,I}^{n} \le  \frac{1}{2} \left\langle \bm{W}_\varphi\mathbb{M}^h \frac{\bm{U}^{n+1}-\bm{U}^n}{\Delta t}, \frac{\bm{U}^{n+1}-\bm{U}^n}{\Delta t} \right\rangle + \frac{1}{2} \operatorname{Re} \langle \bm{W}_\varphi\mathbb{K}^h\bm{U}^{n+1},  \bm{U}^n \rangle,
\end{equation}
For any constant-coefficient operator $\mathbb{A}^h$, it diagonalizes in the frequency domain as a symbol matrix $A(\xi)$. Substituting the inverse SDFT \eqref{fourier transform} into \eqref{weighted inner product}, we have
\begin{align}\label{the weighted inner product transforms}
\begin{aligned}
\langle \bm{W}_\varphi \mathbb{A}^h \bm{U}, \bm{V} \rangle = \frac{1}{4\pi^2} \iint_{(\Pi_h)^2} \widehat{\bm{V}}^H(\eta) A(\xi) \widehat{\bm{U}}(\xi) \left[ \sum_{j \in \mathbb{Z}} \Phi(jh) e^{\sqrt{-1}  j h (\xi - \eta)} \right] \mathrm{d}\xi \mathrm{d}\eta.
\end{aligned}
\end{align}

\textbf{Step 1: Bi-directional Wave Decomposition.} 
Decomposing the frequency evolution in $\mathcal{V}_h^{\delta_k}$ into right- and left-going physical modes, the dynamics satisfy
\begin{equation}
    \widehat{\bm{U}}^n(\xi) = \widehat{\bm{U}}_+^n(\xi) + \widehat{\bm{U}}_-^n(\xi), \quad \text{where} \quad \widehat{\bm{U}}_\pm^n(\xi) = e^{\mp \sqrt{-1} \omega_{ph}(\xi) t_n} \widehat{\bm{U}}_\pm^0(\xi).
\end{equation}
Substituting this decomposition into the discrete time accumulation of $\mathcal{E}_{h,I}^{n}$, the weighted bilinear form expands into four integral components
\begin{equation}
    \Delta t\sum_{n=0}^{N-1}\mathcal{E}_{h,I}^{n} \le \mathcal{I}_{++} + \mathcal{I}_{--} + \mathcal{I}_{+-} + \mathcal{I}_{-+}.
\end{equation}
Here, $\mathcal{I}_{\pm\pm}$ and $\mathcal{I}_{\pm\mp}$ represent the co- and counter-propagating terms, respectively.

\textbf{Step 2: Estimates for the terms $\mathcal{I}_{++}$ and $\mathcal{I}_{--}$.} 
We first consider the estimation of $\mathcal{I}{++}$, and note that the estimation for $\mathcal{I}{--}$ follows similarly.
To evaluate the bilinear form of $\mathcal{E}_{h,I}^{n}$, we define $\mathcal{S}(\xi, \eta)$ as
\begin{equation}
    \mathcal{S}(\xi, \eta) = \frac{1}{2} (\widehat{\bm{U}}_+^0(\eta))^H \Big[ \overline{\delta_t(\eta)} \bm{M}^h \delta_t(\xi) + e^{-\sqrt{-1} \Delta t \omega_{ph}(\xi)} \mathcal{K}^h(\xi) \Big] \widehat{\bm{U}}_+^0(\xi),
\end{equation}
where $\delta_t(\xi) = \frac{e^{-\sqrt{-1} \Delta t \omega_{ph}(\xi)} - 1}{\Delta t}$. Under the physical projection $\widehat{\bm{U}}_+^0(\xi) = \alpha_+(\xi) \bm{v}_{ph}(\xi)$, the $\mathcal{S}(\xi, \eta)$ factorizes as
\begin{equation*}
    \mathcal{S}(\xi, \eta) = \overline{\alpha_+(\eta)} \alpha_+(\xi) \mathcal{Q}_{++}(\xi, \eta),
\end{equation*}
where 
\begin{equation}\label{Q++}
    \mathcal{Q}_{++}(\xi, \eta) = \frac{1}{2} \bm{v}_{ph}^H(\eta) \Big[ \overline{\delta_t(\eta)} \bm{M}^h \delta_t(\xi) + e^{-\sqrt{-1} \Delta t \omega_{ph}(\xi)} \mathcal{K}^h(\xi) \Big] \bm{v}_{ph}(\xi).
\end{equation}
Due to the smoothness of the eigenvectors and the symbol matrices over $(I_{\delta_k})^2$, the scalar kernel $|\mathcal{Q}_{++}(\xi, \eta)| \le C_Q$.
Evaluating $\mathcal{S}(\xi, \eta)$ at the diagonal $\xi = \eta$ recovers the single-point physical energy density
\begin{equation*}
\widehat{\mathcal{E}}_{h,+}(\xi) := \operatorname{Re}\big(\mathcal{S}(\xi, \xi)\big) = |\alpha_+(\xi)|^2 \operatorname{Re}\big(\mathcal{Q}_{++}(\xi, \xi)\big).
\end{equation*}

We then prove that $\operatorname{Re}(\mathcal{Q}_{++}(\xi, \xi))$ admits a positive lower bound $E_{1} > 0$ over $I_{\delta_k}$. Utilizing the discrete dispersion relation $\mathcal{K}^h(\xi) \bm{v}_{ph} = |\delta_t(\xi)|^2 \bm{M}^h \bm{v}_{ph}$ and substituting it into \eqref{Q++}, we obtain
\begin{equation}
    \mathcal{Q}_{++}(\xi, \xi) = \frac{1}{2} |\delta_t(\xi)|^2 \left( 1 + e^{-\sqrt{-1} \Delta t \omega_{ph}(\xi)} \right) \bm{v}_{ph}^H \bm{M}^h \bm{v}_{ph}.
\end{equation}
Taking the real part yields $\operatorname{Re}\big(\mathcal{Q}_{++}(\xi, \xi)\big) = \frac{1}{2} |\delta_t(\xi)|^2 \left[ 1 + \cos(\Delta t \omega_{ph}) \right] \left( \bm{v}_{ph}^H \bm{M}^h \bm{v}_{ph} \right)$.
Since $\bm{M}^h$ is  positive definite, $\bm{v}_{ph}^H \bm{M}^h \bm{v}_{ph}>0$. For propagating physical waves, the temporal difference satisfies $|\delta_t(\xi)|^2 > 0$. Furthermore, under the stability condition $ \max_{\xi}{\sigma_{ph}(\xi)} < \frac{4}{(\Delta t)^2}$, the phase factor satisfies $1 + \cos(\Delta t \omega_{ph}(\xi)) > 0$.

The pointwise positivity $\operatorname{Re}(\mathcal{Q}_{++}(\xi, \xi)) > 0$, combined with its continuity and the compactness of $I_{\delta_k}$, guarantees a positive minimum $E_{1} = \min_{\xi \in I_{\delta_k}} \operatorname{Re}(\mathcal{Q}_{++}(\xi, \xi)) > 0$.
By this lower bound, $|\alpha_{+}(\xi)| \le \sqrt{|\widehat{\mathcal{E}}_{h,+}(\xi)| / E_{1}}$. Extracting these gives
\begin{equation}\label{eq:estimation for S}
    |\mathcal{S}(\xi, \eta)| \le \frac{C_Q}{E_{1}} \sqrt{|\widehat{\mathcal{E}}_{h,+}(\xi)|} \sqrt{|\widehat{\mathcal{E}}_{h,+}(\eta)|}.
\end{equation}
Summing the weighted inner product over $n = 0, \dots, N-1$ with step $\Delta t$ yields
\begin{equation}\label{eq:I_plus_plus}
    \mathcal{I}_{++}\!\! \le \!\!\frac{hC_Q}{4\pi^2 hE_1} \!\!\iint_{(I_{\delta_k})^2 }\! | \sum_{j \in \mathbb{Z}} \Phi(jh) e^{\sqrt{-1} j h (\xi - \eta)}| |K(\xi, \eta)|  \!\sqrt{\widehat{\mathcal{E}}_{h,+}(\xi)} \!\sqrt{\widehat{\mathcal{E}}_{h,+}(\eta)} \mathrm{d}\xi \mathrm{d}\eta.
\end{equation}
where $K(\xi, \eta) = \Delta t \sum_{n=0}^{N-1} e^{-\sqrt{-1} t_n (\omega_{ph}(\xi) - \omega_{ph}(\eta))}$.
To estimate this integral, we partition  $(I_{\delta_k})^2$ into $D = \{(\xi, \eta) : |\xi - \eta| < \varepsilon\}\cap(I_{\delta_k})^2$ and $D^c=(I_{\delta_k})^2\setminus D$.

Given the $\varepsilon$-gap bound $|K(\xi, \eta)| \le C_K$ on $D^c$, we establish the super-algebraic decay of $\widehat{\Phi}(\zeta) = h \sum_{j \in \mathbb{Z}} \Phi(jh) e^{-\sqrt{-1} j h \zeta}$ for $\zeta = \eta - \xi$. Applying discrete summation by parts via $e^{-\sqrt{-1} j h \zeta} = \frac{e^{-\sqrt{-1} j h \zeta} - e^{-\sqrt{-1} (j-1) h \zeta}}{1 - e^{\sqrt{-1} h \zeta}}$ provides the first-order difference
\begin{align*}
    \widehat{\Phi}(\zeta) = \frac{h}{1 - e^{\sqrt{-1} h \zeta}} \sum_{j \in \mathbb{Z}} \Delta_h \Phi(jh) e^{-\sqrt{-1} j h \zeta}.
\end{align*}
Repeating this summation by parts $m$ times consecutively, we obtain
$$\widehat{\Phi}(\zeta) = \frac{h}{(1 - e^{\sqrt{-1} h \zeta})^m} \sum_{j \in \mathbb{Z}} \Delta_h^m \Phi(jh) e^{-\sqrt{-1} j h \zeta}.$$
Within $I_{\delta_k}$, the frequency difference $\zeta = \eta - \xi$ spans up to $| \zeta | \le 2(1-\delta_k)\pi/h$. To bound the $(1 - e^{\sqrt{-1} h \zeta})^m$, we use the following estimate
\begin{equation*}
    |1 - e^{\sqrt{-1} h \zeta}| \ge \frac{2h}{\pi} \min_{k \in \mathbb{Z}} \left| \zeta - \frac{2k\pi}{h} \right|.
\end{equation*}
Spectral truncation separates the frequency difference from the aliasing pole $2\pi/h$ by a $2\delta_k\pi/h$ gap, bounding the wrapped distance in $D^c$ from below by $\min(\varepsilon, 2\delta_k\pi/h) > 0$. This establishes
\begin{align*}
        |\widehat{\Phi}(\zeta)| &= \frac{\left| h \sum_{j \in \mathbb{Z}} \Delta_h^m \Phi(jh) e^{-\sqrt{-1} j h \zeta} \right|}{|1 - e^{\sqrt{-1} h \zeta}|^m}\le \frac{h \sum_{j \in \mathbb{Z}} \left| \Delta_h^m \Phi(jh) \right|}{\left( \frac{2h}{\pi}  \min_{k \in \mathbb{Z}} \left| \zeta - \frac{2k\pi}{h} \right| \right)^m} \\
    &\le \frac{\widetilde{C}_m h^m}{\left( \frac{2}{\pi} \right)^m h^m (\min_{k \in \mathbb{Z}} \left| \zeta - \frac{2k\pi}{h} \right|)^m} = \frac{C_m}{(\min_{k \in \mathbb{Z}} \left| \zeta - \frac{2k\pi}{h} \right|)^m}, \quad \forall m \ge 2,
\end{align*}
where $C_m := \widetilde{C}_m (\pi/2)^m$ is h-independent constant.
Substituting these bounds and using Young's inequality, the integral over $D^c$ is bounded by
\begin{align}
    \mathcal{I}_{++}^{D^c} &\le \frac{C_Q C_K C_m}{2\pi h E_{1}} \iint_{D^c} \frac{1}{\left( \min_{k \in \mathbb{Z}} \left| \zeta - \frac{2k\pi}{h} \right|\right)^m} \frac{\widehat{\mathcal{E}}_{h,+}(\xi) + \widehat{\mathcal{E}}_{h,+}(\eta)}{2} \mathrm{d}\xi \mathrm{d}\eta.
\end{align}
Symmetry reduces the integral to $\widehat{\mathcal{E}}_{h,+}(\xi)$, where partitioning into low- and high-frequency regions produces the bound
\begin{align*}
    \begin{aligned}
    \int_{\eta \in I_{\delta_k}, |\zeta| \ge \varepsilon} \frac{1}{\left( \min_{k \in \mathbb{Z}} \left| \zeta - \frac{2k\pi}{h} \right| \right)^m} \mathrm{d}\eta &\le 2 \int_\varepsilon^{\pi/h} \frac{1}{s^m} \mathrm{d}s + 2 \int_{2\delta_k\pi/h}^{\pi/h} \frac{1}{s^m} \mathrm{d}s \\
    &\le \frac{2}{m-1} \left[ \varepsilon^{1-m} + \left( \frac{h}{2\delta_k\pi} \right)^{m-1} \right].
\end{aligned}
\end{align*}
For any $h \le h_0$, we define
$C_{\varepsilon} ^{\delta} := \frac{2}{m-1} \left[ \varepsilon^{1-m} + \left( \frac{h_0}{2\delta_k\pi} \right)^{m-1} \right]$.
Extracting this explicit constant $C_{\varepsilon} ^{\delta}$, the double integral collapses into the Parseval identity
\begin{equation*}\label{eq:I1_bound}
    \mathcal{I}_{++}^{D^c} \le \left( \frac{C_Q C_K C_m C_{\varepsilon} ^{\delta}}{2 E_{1}} \right) \frac{1}{2\pi h} \int_{I_{\delta_k}} \widehat{\mathcal{E}}_{h,+}(\xi) \mathrm{d}\xi = C_3 C_{\varepsilon} ^{\delta} \mathcal{E}_{h,+}^{0}.
\end{equation*}

With $|\omega_{ph}(\xi) - \omega_{ph}(\eta)| \ge v_g(\delta_k) |\xi - \eta|$ guaranteed by the Mean Value Theorem on $D$, bounding $|K(\xi, \eta)| \le \frac{C_1}{v_g(\delta_k) |\xi - \eta|}$ enables Young's inequality to bound the integral
\begin{align}\label{Idiag}
    \mathcal{I}_{++}^D \le \left( \frac{C_Q C_\varphi}{2 E_{1}} \right) \frac{1}{2\pi h} \int_{I_{\delta_k}} \widehat{\mathcal{E}}_{h,+}(\xi) \left( \int_{|\eta - \xi| < \varepsilon} \min\left( T, \frac{C_1}{v_g(\delta_k) |\xi - \eta|} \right) \mathrm{d}\eta \right) \mathrm{d}\xi,
\end{align}
where $C_\varphi=\|\widehat{\Phi}\|_{L^\infty}$. Integrating the singularity yields
\begin{align}\label{C2}
    \int_{|\eta - \xi| < \varepsilon} \min\left( T, \frac{C_1}{v_g(\delta_k) |\eta - \xi|} \right) \mathrm{d}\eta &\le  \frac{2 C_1}{v_g(\delta_k)} \left[ 1 + \ln\left( \frac{\varepsilon v_g(\delta_k) T}{C_1} \right) \right].
\end{align}
Therefore, substituting \eqref{C2} into \eqref{Idiag} yields
\begin{align*}
    \mathcal{I}_{++}^D &\le \frac{C_2}{v_g(\delta_k)}\left[ 1 + \ln\left( \frac{\varepsilon v_g(\delta_k) T}{C_1} \right) \right]\mathcal{E}_{h,+}^{0}.
\end{align*}
By the symmetry of the wave equation, the left-going branch $\mathcal{I}_{--}$ satisfies the identical upper bound. So we can obtain the following estimation
\begin{equation}\label{I+++I--}
    \mathcal{I}_{++} + \mathcal{I}_{--} \le  \left( C_{3}C_{\varepsilon} ^{\delta} + \frac{C_2}{v_g(\delta_k)}\left[ 1 + \ln\left( \frac{\varepsilon v_g(\delta_k) T}{C_1} \right) \right] \right) \mathcal{E}_h^{0}.
\end{equation}


\textbf{Step 3: Estimates for the terms $\mathcal{I}_{+-}$ and $\mathcal{I}_{-+}$.} 
For $\mathcal{I}_{+-}$, the oddness of $\omega_{ph}$ and the Mean Value Theorem guarantee $|\omega_{ph}(\xi) + \omega_{ph}(\eta)| \ge v_g(\delta_k) |\xi + \eta|$, bounding the discrete temporal kernel $L_N(\xi, \eta) = \Delta t \sum_{n=0}^{N-1} e^{-\sqrt{-1} t_n (\omega_{ph}(\xi) + \omega_{ph}(\eta))}$ by 
\begin{equation*}
    |L_N(\xi, \eta)| = \left| \Delta t \sum_{n=0}^{N-1} e^{-\sqrt{-1} t_n (\omega_{ph}(\xi) + \omega_{ph}(\eta))} \right| \le \min\left( T, \frac{C_1}{v_g(\delta_k)|\xi + \eta|} \right).
\end{equation*}

To evaluate $\mathcal{I}_{+-}$, we partition $(I_{\delta_k})^2$ into three mutually disjoint regions
\[D_{1} = \{|\xi + \eta| < \varepsilon\}\cap(I_{\delta_k})^2,\,D_{2} = (\{|\xi - \eta| < \varepsilon\} \setminus D_{1})\cap(I_{\delta_k})^2\,\text{and}\,D_{3} = (I_{\delta_k})^2 \setminus (D_{1} \cup D_{2}).\]
For $(\xi, \eta) \in D_{3}$, both $|\xi + \eta| \ge \varepsilon$ and $|\xi - \eta| \ge \varepsilon$ hold. The temporal kernel is bounded by $C_K := \frac{C_1}{v_g(\delta_k)\varepsilon}$. For the $\Phi$, we  recall the super-algebraic decay bound derived via discrete summation by parts. Since the inequality $|\xi - \eta| \ge \varepsilon$ holds, we have
\begin{equation*}
    |\widehat{\Phi}(\xi - \eta)| \le \frac{C_m}{\min_{k \in \mathbb{Z}} |\xi - \eta - \frac{2k\pi}{h}|^m}.
\end{equation*}
As in Step 2, $\mathcal{S}_{+-}(\xi, \eta)$ factorizes into right- and left-going scalar amplitudes
\begin{equation*}
    |\mathcal{S}_{+-}(\xi, \eta)| = |\alpha_+(\xi)| |\alpha_-(\eta)| |\mathcal{Q}_{+-}(\xi, \eta)|,
\end{equation*}
where
$\mathcal{Q}_{+-}(\xi, \eta) = \frac{1}{2} \bm{v}_{ph}^H(\eta) \Big[ \delta_t(\eta) \bm{M}^h \delta_t(\xi) + e^{-\sqrt{-1} \Delta t \omega_{ph}(\xi)} \mathcal{K}^h(\xi) \Big] \bm{v}_{ph}(\xi)$.

Recalling the positive lower bound $E_1 > 0$ established in the Step 2, we deduce the inequalities $|\alpha_+(\xi)| \le \sqrt{\widehat{\mathcal{E}}_{h,+}(\xi) / E_1}$ and $|\alpha_-(\eta)| \le \sqrt{\widehat{\mathcal{E}}_{h,-}(\eta) / E_1}$.
Since the $\mathcal{Q}_{+-}(\xi, \eta)$ is continuous over the compact truncated domain $(I_{\delta_k})^2$, it is bounded by $|\mathcal{Q}_{+-}(\xi, \eta)| \le C_Q$. Applying Young's inequality yields the bound
\begin{equation*}
    |\mathcal{S}_{+-}(\xi, \eta)| \le \frac{C_Q}{E_1} \sqrt{\widehat{\mathcal{E}}_{h,+}(\xi)} \sqrt{\widehat{\mathcal{E}}_{h,-}(\eta)} \le \frac{C_Q}{2 E_1} \Big( \widehat{\mathcal{E}}_{h,+}(\xi) + \widehat{\mathcal{E}}_{h,-}(\eta) \Big).
\end{equation*}
Substituting the bounds into the integral $\mathcal{I}_{+-}^{D_{3}}$, we obtain
\begin{equation}\label{I_d3}
    |\mathcal{I}_{+-}^{D_{3}}|\! \le\! \frac{C_KC_Q}{4E_1\pi h} \iint_{D_{3}} \frac{C_m}{\min_{k} |\xi - \eta - \frac{2k\pi}{h}|^m}  \Big( \widehat{\mathcal{E}}_{h,+}(\xi) + \widehat{\mathcal{E}}_{h,-}(\eta) \Big) \mathrm{d}\xi \mathrm{d}\eta \!\le\! C_3 C_{\varepsilon} ^{\delta} \mathcal{E}_h^{0}.
\end{equation}

For $(\xi, \eta) \in D_{1}$, $\Phi$ satisfies $|\widehat{\Phi}(\xi - \eta)| \le C_\varphi$, and applying Young's inequality yields $|\mathcal{S}_{+-}(\xi, \eta)| \le \frac{C_Q}{2 E_1} \big( \widehat{\mathcal{E}}_{h,+}(\xi) + \widehat{\mathcal{E}}_{h,-}(\eta) \big)$.
Substituting these into the integral, we evaluate the contributions of the right-going and left-going energies symmetrically.
    \begin{align}
        |\mathcal{I}_{+-}^{D_{1}}| \le& \frac{C_\varphi}{2\pi h} \iint_{D_{1}} \min\left( T, \frac{C_1}{v_g(\delta_k)|\xi + \eta|} \right) \frac{C_Q}{2 E_1} \Big( \widehat{\mathcal{E}}_{h,+}(\xi) + \widehat{\mathcal{E}}_{h,-}(\eta) \Big) \mathrm{d}\xi \mathrm{d}\eta \nonumber\\
        \le& \frac{C_Q C_\varphi}{4\pi h E_1} \left( \int_{I_{\delta_k}} \widehat{\mathcal{E}}_{h,+}(\xi) \left[ \int_{-\varepsilon}^{\varepsilon} \min\left( T, \frac{C_1}{v_g(\delta_k)|s|} \right) \mathrm{d}s \right] \mathrm{d}\xi \right) \nonumber\\
        &+ \frac{C_Q C_\varphi}{4\pi h E_1} \left( \int_{I_{\delta_k}} \widehat{\mathcal{E}}_{h,-}(\eta) \left[ \int_{-\varepsilon}^{\varepsilon} \min\left( T, \frac{C_1}{v_g(\delta_k)|s|} \right) \mathrm{d}s \right] \mathrm{d}\eta \right) \nonumber\\
        =& \frac{C_Q C_\varphi}{4\pi h E_1}\!\left[ \int_0^{\frac{C_1}{v_g(\delta_k)T}} T \mathrm{d}s \!+\!\! \int_{\frac{C_1}{v_g(\delta_k)T}}^{\varepsilon} \frac{C_1}{v_g(\delta_k)s} \mathrm{d}s \right] \!\!\left( \int_{I_{\delta_k}} \!\Big( \widehat{\mathcal{E}}_{h,+}(\zeta) + \widehat{\mathcal{E}}_{h,-}(\zeta) \Big) \mathrm{d}\zeta \!\right).\label{I_d2}
    \end{align}
    
Substituting $s = \xi + \eta$, the integral of the positive, even, decreasing function $f(s) = \min(T, \frac{C_1}{v_g(\delta_k)|s|})$ over any $2\varepsilon$-interval is maximized at the origin. Thus
\begin{align}\label{I_d1}
        |\mathcal{I}_{+-}^{D_{2}}| &\le\! \frac{C_Q C_\varphi}{4\pi h E_1} \iint_{|\xi - \eta| < \varepsilon} \min\left( T, \frac{C_1}{v_g(\delta_k)|\xi + \eta|} \right) \Big( \widehat{\mathcal{E}}_{h,+}(\xi) + \widehat{\mathcal{E}}_{h,-}(\eta) \Big) \mathrm{d}\xi \mathrm{d}\eta\notag \\
        &\le\! \frac{1}{2} \frac{C_2}{v_g(\delta_k)} \left[ 1 + \ln\left(\frac{\varepsilon v_g(\delta_k)T}{C_1}\right) \right] \mathcal{E}_h^{0}.
    \end{align}

Summing the contributions from the three mutually disjoint regions \eqref{I_d3}, \eqref{I_d2} and \eqref{I_d1}, the total energy induced by the term $\mathcal{I}_{+-}$ satisfies
\begin{equation*}
    \begin{aligned}
        |\mathcal{I}_{+-}| \le |\mathcal{I}_{+-}^{D_{1}}| + |\mathcal{I}_{+-}^{D_{2}}| + |\mathcal{I}_{+-}^{D_{3}}| \le 2\left[ C_3 C_{\varepsilon} ^{\delta}  + \frac{C_2}{v_g(\delta_k)} \left( 1 + \ln\left(\frac{\varepsilon v_g(\delta_k)T}{C_1}\right) \right)\right] \mathcal{E}_h^{0}.
    \end{aligned}
\end{equation*}
The conjugate symmetries $\widehat{\Phi}(\eta - \xi) = \overline{\widehat{\Phi}(\xi - \eta)}$ and $\mathcal{Q}_{-+}(\xi, \eta) = \overline{\mathcal{Q}_{+-}(\eta, \xi)}$ ensure $\mathcal{I}_{-+} = \overline{\mathcal{I}_{+-}}$. This identical absolute magnitude bounds the cumulative counter-propagating energy by
\begin{equation}\label{I+-+I-+}
 |\mathcal{I}_{+-}| + |\mathcal{I}_{-+}| \le 4\left[ C_3 C_{\varepsilon} ^{\delta}  + \frac{C_2}{v_g(\delta_k)} \left( 1 + \ln\left(\frac{\varepsilon v_g(\delta_k)T}{C_1}\right) \right)\right] \mathcal{E}_h^{0}.
\end{equation}

\textbf{Step 4: Obtain the Uniform Observability Inequality.}
Substituting \eqref{I+++I--} and \eqref{I+-+I-+} into \eqref{eq:energy_split} provides the bound
\[\big( T - C_{4}(T, \delta_k) \big) \mathcal{E}_h^{0} \le \Delta t \sum_{n=0}^{N-1} \mathcal{E}_{h,\Omega}^{n}.\]
Under $\varepsilon \le \frac{C_1}{10 C_2}$, applying $\ln x < x$  bounds the logarithmic time-growth in $C_4$ by $T/2$. This isolates the time-independent components, reducing the estimate to
\begin{equation*}
\left( \frac{T}{2} - C_5(\delta_k) \right) \mathcal{E}_h^{0} \le \Delta t \sum_{n=0}^{N-1} \mathcal{E}_{h, \Omega}^{n},
\end{equation*}
where $C_5(\delta_k) = \frac{10C_{3}}{m-1} \big[ \varepsilon^{1-m} + \big( \frac{h_0}{2\delta_k\pi} \big)^{m-1} \big] + \frac{5C_2}{v_g(\delta_k)}$. Taking $m=2$ completes the proof.
\end{proof}

\section{Conclusions}\label{section 4}
This paper identifies a structural obstruction to uniform observability for fully discrete high-order \(P^k\)-LDG scheme of the 1-D wave equation. The key mechanism is the coupled 
space-time dispersion generated by the fully discrete symbol, which contains one physical branch and \(k\) spurious branches. This coupling forces the group velocities of relevant modes to vanish at critical frequencies, trapping high-frequency wave packets away from the observation region. Consequently, the observability constant grows at least exponentially as \(h\to0\), 
revealing a severe instability of the unfiltered fully discrete dynamics for long-time wave propagation and control.
As a constructive remedy, uniform observability is recovered by projecting onto the physical branch and retaining only frequencies where the group velocity remains positive. Numerical experiments support this mechanism and suggest that higher-order LDG schemes preserve a larger usable physical frequency band, reducing the effective filtering cost and observability time.

While this work employs spectral filtering, bi-grid techniques provide an alternative physical-space regularization mechanism to restore uniform observability. This strategy, pioneered in \cite{MR1196839} and further developed in \cite{MR2486937}, evolves the discrete dynamics on a fine mesh while restricting the initial data to a coarser grid, bypassing explicit frequency-domain truncation. Both approaches suppress discrete modes with vanishing group velocities via distinct projection operators. Extending the bi-grid framework to fully discrete high-order schemes, alongside generalizations to multidimensional domains, variable coefficients, and nonuniform meshes, constitutes a natural direction for future research.

\bibliographystyle{siamplain}
\bibliography{references}
\end{document}